\documentclass{article}

\makeatletter

\@addtoreset{equation}{section}
\makeatother

\usepackage{amsmath}
\usepackage{amssymb}
\usepackage{amsthm}
\usepackage{graphics}
\usepackage{epsfig}
\usepackage{fullpage}
\usepackage{authblk}

\numberwithin{equation}{section}

\usepackage[dvipdfmx, colorlinks=true, allcolors=blue]{hyperref}

\def\a{{\alpha}}
\def\b{{\beta}}
\def\e{{\varepsilon}}
\def\k{{\kappa}}

\def\pa{{\partial}}

\def\r{{\mathbb{R}}}
\def\z{{\mathbb{Z}}}

\newtheorem{theorem}{Theorem}

\newtheorem{lemma}{Lemma}

\newtheorem{remark}{Remark}

\begin{document}

\title{Rigorous Construction of Stop-and-Go Waves in the Optimal Velocity Model via a Difference-Differential Equation}

\author[1]{Kota Ikeda\thanks{Corresponding author. Email: ikeda@meiji.ac.jp}\thanks{These authors contributed equally to this work.}}
\author[2]{Tomoyuki Miyaji\thanks{Email: miyaji.tomoyuki.4m@kyoto-u.ac.jp}\thanks{These authors contributed equally to this work.}}

\affil[1]{School of Interdisciplinary Mathematical Sciences, Meiji University, Nakano, Tokyo, Japan}
\affil[2]{Department of Mathematics, Kyoto University, Kyoto, Kyoto, Japan}

\maketitle

\begin{abstract}
Nonlinear wave phenomena such as stop-and-go traffic patterns are widely observed in vehicular flow but remain challenging to describe within a rigorous mathematical framework. Motivated by this, we investigate nonlinear wave structures in the optimal velocity (OV) model, which is a fundamental microscopic traffic flow model describing the car-following dynamics on a circuit. Using a traveling-wave formulation for vehicle headways, we reduce the original ordinary differential system to a difference-differential equation. We focus on steep OV functions approaching a step function, which generate sharp transition layers in the headway profile. In the singular limit, we explicitly construct heteroclinic transition layer solutions connecting two uniform traffic states. Motivated by related solvable queueing models in the literature, we rigorously prove the existence of heteroclinic traveling waves for sufficiently steep OV functions. We further establish the existence of homoclinic solutions arising from the interaction of increasing and decreasing transition layers, and derive a necessary condition for the amplitude parameter for their existence. To construct periodic stop-and-go waves on a circuit, we impose a global constraint reflecting the conservation of the total road length. Within this constrained framework, we prove the existence of large-period periodic solutions comprising alternating transition layers and quasi-uniform states. Beyond local bifurcation analysis, these results establish a rigorous foundation for nonlinear congestion waves. Furthermore, they contribute to the validation of car-following models and the design of control strategies to mitigate congestion.

\vspace{1em}
\noindent\textbf{Keywords:} Functional-differential equations with delay, Homoclinic and heteroclinic solutions, Periodic solutions, Car-following model, Optimal velocity model, Nonlinear congestion waves\\
\textbf{MSC Classification:} 34K13, 34K16, 90B20
\end{abstract}


\section{Introduction}\label{sec_Introduction}

The rapid proliferation of vehicles on road networks, 
including highways and urban streets, has raised significant economic, social, 
and environmental concerns---specifically regarding energy consumption, 
traffic safety, and pollution \cite{bellomo2002mathematical}.
Consequently, traffic flow modeling has become a highly active research field, 
attracting interest from both applied mathematicians and engineers. 
Understanding and predicting traffic dynamics are crucial for rational planning 
and effective management of vehicular flow, particularly urban congestion.
Transportation efficiency continues to pose a pressing global challenge.
Despite significant advancements, the inherent complexity of traffic systems 
renders developing a comprehensive mathematical framework that accurately 
captures the dynamic and nonlinear nature of vehicular movement difficult. 
This complexity necessitates a multidisciplinary approach that integrates 
theoretical modeling with experimental data, highlighting the need 
for further theoretical advancements to enhance predictive capabilities 
and optimize traffic management strategies \cite{piccoli2009vehicular}.

Mathematical modeling plays a crucial role in understanding and optimizing traffic 
flow by describing vehicle dynamics through various approaches 
\cite{bellomo2002mathematical}. 
A key concept is the fundamental diagram, i.e., the empirical relationship linking 
traffic speed (or flow) to vehicle density. 
Here, the density is closely related to the headway, 
which denotes the distance (or time gap) between successive vehicles. 
Traffic flow models are traditionally categorized into microscopic and macroscopic 
models \cite{konishi2000decentralized}. 
Microscopic models such as car-following models describe individual vehicle behavior,
whereas macroscopic models, inspired by fluid dynamics, 
treat traffic as a continuous medium \cite{van2015genealogy}. 
As a fundamental microscopic car-following framework, the optimal velocity (OV) 
model proposed by \cite{bando1994structure} has been widely applied. 
In this model, each driver adjusts their acceleration toward 
a desired velocity determined by the headway to the leading vehicle. 
Specifically, the model is given by
\begin{equation}\label{eq_OV}
  \ddot x_n = a (V (\Delta x_n) - \dot x_n), \quad n = 1, \ldots, N
\end{equation}
in a circuit of length $L$, where $x_n = x_n (\tau)$ is the position of vehicle $n$ 
at time $\tau$, and $a$ is a positive constant related to driver sensitivity. 
We denote the distance between vehicle $n$ and its preceding vehicle $n+1$ 
by $\Delta x_n = x_{n+1} - x_n$ for $n = 1, \ldots, N$, where $x_{N+1} \equiv x_1 + L$.
The dot $\dot {}$ denotes the derivative with respect to $\tau$. 
The OV function $V$ explicitly links spacing, speed, and flow, 
naturally connecting the model to the fundamental empirical diagram 
and enabling the interpretation of capacity drops and metastable states 
\cite{bando1995phenomenological}. 
It is typically given by
\begin{equation}\label{eq_OVfunction_tanh}
  V (x) = \dfrac{V_0}{2} (\tanh (\beta (x - l)) + M)
\end{equation}
for $\beta, V_0, l > 0$ and $M \in \r$ as observed in \cite{bando1995phenomenological}. 
Formulated as a system of ordinary differential equations, this model successfully 
reproduces the spontaneous emergence of stop-and-go waves, 
playing a key role in advancing microscopic traffic modeling 
\cite{kikuchi2000asymmetric}. 
Extensive analysis of its uniform equilibrium reveals that homogeneous flow becomes 
unstable beyond a critical sensitivity threshold, resulting in oscillatory behavior 
\cite{bando1994structure, bando1995dynamical, helbing1998generalized}.

The model has been extended in multiple directions, including the Full Velocity 
Difference model incorporating relative velocity terms \cite{jiang2001full}, 
multi-anticipative, and cooperative driving models \cite{treiber2003memory}, 
and variants that account for driver heterogeneity, reaction time delays, 
or automated-vehicle control. 
These developments reflect a broader evolution from simple dynamic systems 
toward realistic representations of human and automated driving behavior, 
exemplified by the Intelligent Driver Model,
whose congested states and fundamental characteristics have been 
investigated using large-scale detector data obtained from German freeways 
\cite{treiber2000congested, treiber2000microscopic}. 
In addition to continuous models, cellular automata approaches offer an alternative 
microscopic paradigm with well-documented properties \cite{maerivoet2005cellular}. 
Comprehensive reviews have summarized the mathematical characteristics of the OV model 
itself \cite{sugiyama2023dynamics} as well as broader perspectives on car-following 
and traffic modeling \cite{brackstone1999car, maerivoet2005cellular, zhang2024car}. 
Thus, despite extensive analytical efforts, (\ref{eq_OV}) and its descendants 
remain foundational tools for interpreting traffic dynamics and informing 
modeling practices in contemporary transportation research.
Conversely, although analytical approaches have been considered,
they predominantly focus only on identifying the bifurcation points that cause
Hopf bifurcation through the stability analysis of uniform flows; 
the other parts are analyzed using numerical simulations \cite{gasser2004bifurcation}.

To analytically capture the characteristic solutions representing traffic 
congestion---specifically, the stop-and-go waves---we focus on the fact 
that fully developed congested states exhibit distinct periodic fluctuations 
in vehicle headway. 
Motivated by this observation, we formulate a governing equation for such 
oscillatory behavior. 
Consequently, we are interested in a solution $x_n (t)$ in (\ref{eq_OV}), 
for which $\Delta x_n$ is a periodic function with period $\omega$,
and is represented by $\Delta x_n (\tau) = u (c \tau + n-1)$ 
for a function $u = u (t)$ with $t = c \tau + n-1$ and $c = N / \omega$.
Since $\Delta x_n (\tau) = \Delta x_{n} (\tau + \omega)$, 
$u$ must be a periodic function with period $N$ because
$u (c \tau + n - 1) = u (c (\tau + \omega) + n - 1) = u (c \tau + n - 1 + N)$.
Since $\Delta x_{n+1} (\tau) = u (c \tau + n)$, we have 
\[
c^2 u'' (c \tau + n-1) = a (V (u (c \tau + n)) - V (u (c \tau + n-1))) 
- a c u' (c \tau + n-1),
\]
where the prime $'$ denotes a derivative with respect to its argument.
Then, $u$ is governed by 
\begin{equation}\label{eq_u}
 c^2 u'' (t) + a c u' (t) = a (V (u (t + 1)) - V (u (t))).
\end{equation}
Assuming (\ref{eq_OVfunction_tanh}), we carried out numerical simulations 
for (\ref{eq_u}).
The numerical results indicate the existence of a pair of 
$c > 0$ and a periodic solution $u$ with period $N$ (see Fig.~\ref{fig_periodic}).
Throughout this study, the pair $(u, c)$ is referred to as the solution to (\ref{eq_u}).
When $N$ increases, the state of $u$ is divided into four parts.
More precisely, there exist two intervals $I_i, I_d \subset [0, N]$ 
independent of $N$ such that $u (t) |_{I_i}$ and $u (t) |_{I_d}$ monotonically 
increase and decrease and are close to the transition layers connecting 
some constants, denoted by $u_1$ and $u_2$.
In the other regions, $u$ is close to either $u_1$ or $u_2$.
These numerical observations confirm that this specific profile of $u$ effectively 
represents the traffic congestion in the original OV model (\ref{eq_OV}).
Therefore, successfully capturing this solution $(u, c)$ to (\ref{eq_u}) enables us 
to analytically construct the target characteristic solutions.


\begin{figure}[h]
 \begin{center}
  \begin{tabular}{c}
   \includegraphics[width=7cm]{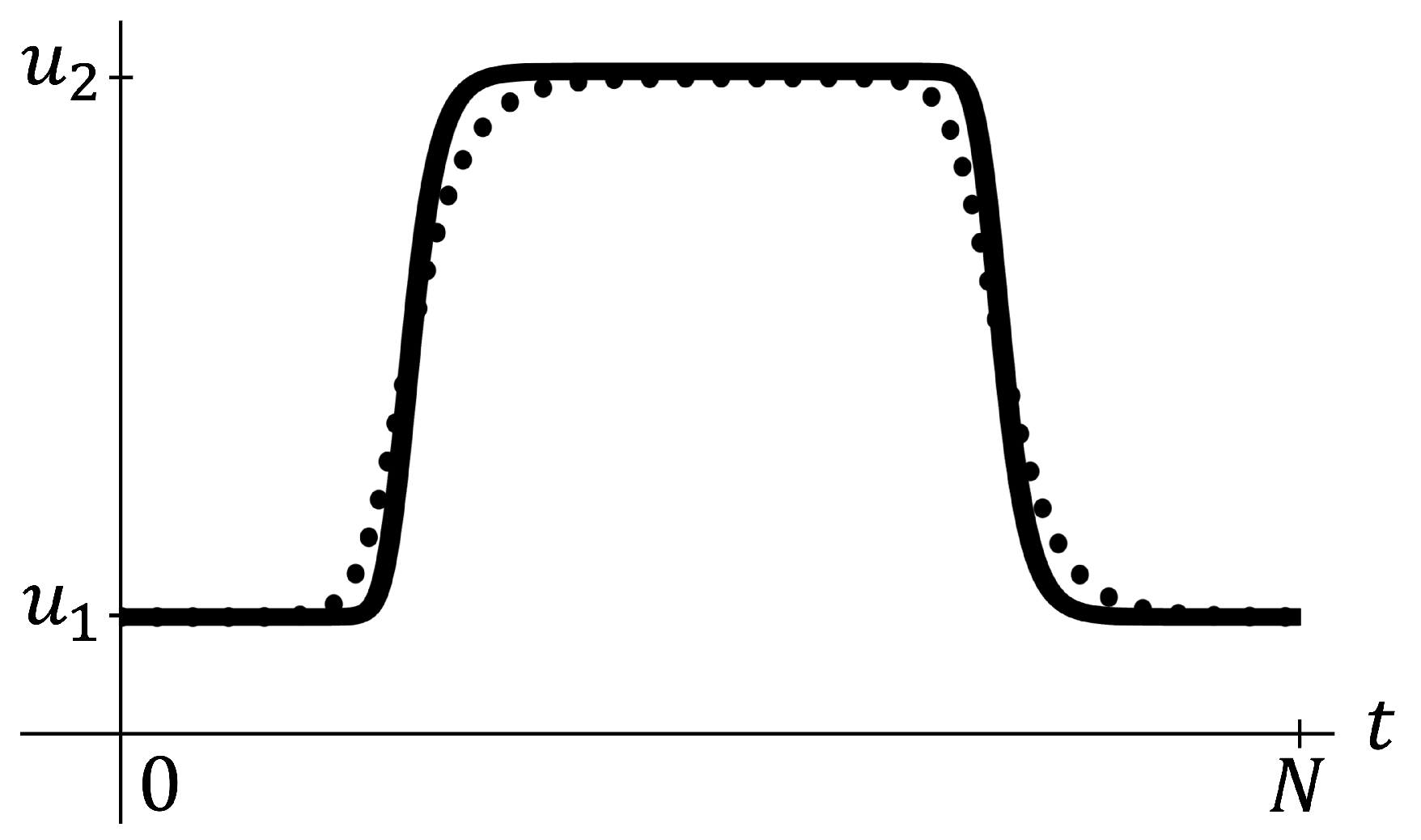}
  \end{tabular}
  \caption{
  \label{fig_periodic}
  Periodic solutions of (\ref{eq_u}) on the interval $[0, N]$. 
  The dotted and solid lines represent numerical results for $N = 20$ 
  and $N = 40$, respectively. 
  The OV function is given by (\ref{eq_OVfunction_tanh}) with parameters 
  $V_0 = 0.0336$, $\beta = 2 / 0.0223$, $l = 0.025$, $M = 0.913$, and $a = 1.6$.
  }
 \end{center}
\end{figure}


Consider the existence of solutions to (\ref{eq_u})
associated with $u (t) |_{I_i}$ and $u (t) |_{I_d}$.
Throughout this study, we make the following assumptions
on the OV function $V \in C^2 (\r)$:
\begin{itemize}
 \item[(A$_1$)]
	      $V$ depends on a parameter $\b > 0$ and is close to $V_0 H (x - l) + V_1$
	      for $l, V_0 > 0$ and $V_1 \in \r$ 
	      where $H = H (x)$ is the unit step function defined by 
	      $H (x) = 0$ if $x < 0$ and $H (x) = 1$ if $x \ge 0$.
	      More precisely, there are $\a, C_0 > 0$ independent of $\b$ such that 
	      \[
	      \begin{aligned}
	       & |V (x) - (V_0 H (x - l) + V_1)| \le C_0 e^{- \a \b |x - l|}, \\
	       & |V' (x)| \le C_0 \b e^{- \a \b |x - l|},
	       \quad |V'' (x)| \le C_0 \b^2 e^{- \a \b |x - l|}	       
	      \end{aligned}
	      \]
	      in $x \in \r$.
\end{itemize}
The OV function given by (\ref{eq_OVfunction_tanh})
satisfies all the aforementioned assumptions by setting $V_1 = V_0 (M - 1) / 2$.
As shown in (A$_1$), $V$ is assumed to converge to a step function
as the limit for $\b \to \infty$.
Based on this assumption, we expect that $u (t) |_{I_i}$ and $u (t) |_{I_d}$ 
can be approximated using solutions with transition layers that satisfy the following profile equation:
\begin{equation}\label{eq_u_infty}
 c^2 u'' (t) + a c u' (t) = a V_0 (H (u (t + 1) - l) - H (u (t) - l)),
  \quad t \in \r.
\end{equation}
It is easy to prove that if $0 < \eta < V_0 / a$, (\ref{eq_u_infty}) has 
a unique solution $c = c_0 > 0$ and $u = u_i \in C^1 (\r)$ that satisfies 
\begin{equation}\label{eq_BC}
 u (-\infty) < l,
  \quad u (0) = l, 
  \quad u (\infty) = l + \eta,
\end{equation}
where $u (\pm \infty) \equiv \lim_{t \to \pm \infty} u (t)$.
Here we note that $u_2 = l + \eta$.
Because of translation invariance, we assume the second condition in (\ref{eq_BC})
without loss of generality.
More precisely, $u_i = u_i (t)$ and $c_0$ can be explicitly expressed as
\begin{equation}\label{eq2_ui_explicit}
 u_i (t) = 
  \left\{
   \begin{aligned}
    & l + \eta + \dfrac{V_0}{a} \log ( 1 - \dfrac{a}{V_0} \eta ), 
    & & t < -1, \\
    & l + \eta - \left( \dfrac{V_0}{a} (1 - e^{- \k t}) 
    - V_0 \dfrac{t}{c_0} + \eta e^{- \k t} \right), 
    & & -1 \le t < 0, \\
    & l + \eta - \eta e^{- \k t}, 
    & & t \ge 0,
   \end{aligned}
  \right.
\end{equation}
and
\begin{equation}\label{eq2_cs}
 c_0 = - \dfrac{a}{\log ( 1 - \dfrac{a}{V_0} \eta )},
\end{equation}
where $\k \equiv a / c_0$.
Similarly, (\ref{eq_u_infty}) has a unique solution 
$c = c_0 > 0$ and $u = u_d \in C^1 (\r)$ that satisfies
\begin{equation}\label{eq_BC_2}
 u (-\infty) > l,
  \quad u (0) = l, 
  \quad u (\infty) = l - \eta.
\end{equation}
We can easily verify that  $2 l - u_i (t)$ satisfies
the solution to (\ref{eq_u_infty}) with (\ref{eq_BC_2}),
that is, $u_d (t) = 2 l - u_i (t)$ because 
\begin{equation}\label{eq_H_symmetry}
 \begin{aligned}
  H (u_d (t + 1) - l) - H (u_d (t) - l)
  & = H (l - u_i (t + 1)) - H (l - u_i (t)) \\
  & = - (H (u_i (t + 1) - l) - H (u_i (t) - l)).
 \end{aligned} 
\end{equation}
To emphasize the $\eta$-dependencies of $c_0$, $u_i$, and $u_d$,
we may denote $c_0 (\eta)$, $u_i (t; \eta)$, and $u_d (t; \eta)$.
This study aims to prove the existence of a solution $(u, c)$ in (\ref{eq_u})
approximated by $(u_i, c_0)$ and $(u_d, c_0)$.

We first state the existence of a solution to (\ref{eq_u}) in $\r$
that satisfies (\ref{eq_BC}) or (\ref{eq_BC_2}).
We call the corresponding solution the {\it heteroclinic solution}.
All constants are stationary solutions of (\ref{eq_u}).


\begin{theorem}\label{thm_1}
 Let $0 < \eta < V_0 / a$ be arbitrary.
 If $\b$ is sufficiently large, then there exist $C > 0$ independent of $\b$
 and a solution $(u, c)$ of (\ref{eq_u}) and (\ref{eq_BC}) (resp. (\ref{eq_BC_2}))
 such that $|c - c_0| \le C \b^{-1/2}$ and 
 $|u (t) - u_i (t)| \le C \b^{-1/2}$ (resp. $|u (t) - u_d (t)| \le C \b^{-1/2}$)
 in $t \in \r$.
\end{theorem}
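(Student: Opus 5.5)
We plan to prove Theorem~\ref{thm_1} for the increasing layer (\ref{eq_BC}) only. The decreasing case then follows from the same argument applied to the reflected function $\tilde V(x) = 2V_1+V_0 - V(2l-x)$. This function satisfies (A$_1$) with the same constants, and by the analogue of (\ref{eq_H_symmetry}), $2l-u$ solves (\ref{eq_u}) with $V$ exactly when $u$ solves it with $\tilde V$.

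The main idea is to exploit the structure visible in (\ref{eq2_ui_explicit}): the left state is reached exactly for $t<-1$, and on $[0,\infty)$ the solution is governed by a linear equation once $u>l$. We treat (\ref{eq_u}) as a shooting problem in the spirit of the explicit queueing-type construction. Fix the normalization $u(0)=l$. For $t\ge 0$ we impose $u(t)\ge l+\delta$ beyond a short layer of width $O(\b^{-1/2})$. There $V(u)$ is within $C_0e^{-\a\b\delta}$ of $V_0+V_1$, so the right-hand side of (\ref{eq_u}) is exponentially small and $u$ relaxes like $l+\eta-\eta e^{-\k t}$.

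For $t\le -1$ we require $u$ to be near the constant $u_1=l+\eta+\frac{V_0}{a}\log(1-\frac{a}{V_0}\eta)<l$, so both $V(u(t))$ and $V(u(t+1))$ are within $O(e^{-\a\b\delta})$ of $V_1$.

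The mismatch with the singular solution is concentrated where $u(t)$ or $u(t+1)$ crosses $l$, that is, near $t=0$ and $t=-1$. We choose $\delta=\b^{-1/2}$, and we use that $u_i'(0)=\k\eta>0$ is transversal. Then $u$ spends time $O(\b^{-1/2})$ in the band $|u-l|\le\delta$. The error incurred in the integral of the forcing $a(V(u(t+1))-V(u(t)))$ is therefore $O(\b^{-1/2})$, and this is the source of the rate $\b^{-1/2}$ in the statement.

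Concretely, we write (\ref{eq_u}) as the integral equation
\[
u(t)=l+\eta-\frac{1}{c}\int_t^\infty \bigl(1-e^{-\frac{a}{c}(s-t)}\bigr)\,\bigl(V(u(s+1))-V(u(s))\bigr)\,ds,
\]
which incorporates $u(\infty)=l+\eta$. We seek $u=u_i+w$ with $w$ in the space of bounded continuous functions, together with $c$ near $c_0$, and determine $(w,c)$ by a contraction or Schauder fixed point on a ball of radius $C\b^{-1/2}$. The scalar unknown $c$ is fixed by the normalization $u(0)=l$, or equivalently by requiring that $u(-\infty)$ exists with $u$ constant-like for $t<-1$.

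The key estimates are the following. Given $|w|\le C\b^{-1/2}$ and $|w'|$ small, the crossing times of $u$ through $l$ are within $O(\b^{-1/2})$ of those of $u_i$, by transversality. Consequently $\int|H(u(s)-l)-H(u_i(s)-l)|\,ds=O(\b^{-1/2})$, and by (A$_1$) the smoothing error $\int|V(u)-V_0H(u-l)-V_1|\,ds=O(\b^{-1})$. Combining these shows the map sends the ball into itself. Finally, $\partial c_0/\partial\eta\neq0$, computed from (\ref{eq2_cs}), lets us solve the scalar equation for $c$ with $|c-c_0|\le C\b^{-1/2}$ by the intermediate value theorem, since continuity in $c$ suffices.

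We expect the main obstacle to be that the nonlinearity is not Lipschitz uniformly in $\b$: $V'$ is of size $\b$, so a direct contraction fails. Our first attempt is to use Schauder's theorem instead. We would obtain compactness from equicontinuity, since $u'$ is bounded by the integral formula, and control decay at $\pm\infty$ through the exponential kernels. This yields existence without uniqueness, which is all Theorem~\ref{thm_1} asks for. Two remaining difficulties must be checked within the same a priori bounds. One is verifying that the limit as $t\to-\infty$ exists and lies below $l$. The other is ensuring $u$ does not recross $l$ elsewhere; this follows because $u_i$ stays at distance $\gg\b^{-1/2}$ from $l$ away from $t=0$ and $t=-1$.
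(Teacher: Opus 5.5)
There is a genuine gap, and it starts with the integral formulation. Your formula does not represent (\ref{eq_u}). Differentiating it gives $u''-\frac ac u'=-\frac{a}{c^2}g$ with $g(s)=V(u(s+1))-V(u(s))$, which is the time-reversed equation. In the singular limit, where $g=V_0$ on $[-1,0)$ and $g=0$ elsewhere, your formula returns $u\equiv l+\eta$ on $[0,\infty)$. That contradicts both $u(0)=l$ and the relaxation $l+\eta-\eta e^{-\k t}$ you describe yourself. The solution of $c^2u''+acu'=ag$ with $u(\infty)=l+\eta$ and $u'$ bounded at $-\infty$ uses the causal kernel:
\[
u(t)=l+\eta-\frac1c\int_t^\infty g(s)\,ds-\frac1c\int_{-\infty}^t e^{-\frac ac (t-s)}g(s)\,ds .
\]
Even with this corrected, your treatment of $c$ does not close. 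For fixed $c$ the map is translation-equivariant. In the singular limit, if $u_i+w$ crosses $l$ at $t_c$, the image is the speed-$c$ profile translated so that it crosses $l$ at $t_c+\tau(c)$, where $\tau(c_0)=0$ and $\tau(c)\neq0$ otherwise. So the crossing point is preserved at the right speed and shifted at every application for any other speed. Fixed points near $u_i$ therefore exist for only one, unknown, value of $c$. You cannot verify invariance of a ball for a given $c$, so Schauder yields nothing to which an intermediate value argument in $c$ could apply. In any case, Schauder provides no continuous selection $c\mapsto u_c$. Requiring that $u(-\infty)$ exists does not fix $c$ either: it holds automatically, with $cu(-\infty)-V(u(-\infty))=c(l+\eta)-V(l+\eta)$. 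Finally, bounded continuous $w$ is not a workable space. Near $\pm\infty$ the forcing is only $O(\b e^{-\a\b\rho}|u(s+1)-u(s)|)$, which is integrable only if $w'$ decays; this is why the paper uses the weights $\bar\varphi_\pm$.

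The missing idea is to pin the phase inside the function space and let $c$ be the compensating unknown at every step. The paper writes $\tilde u=u_i+c\phi_0+\phi$ with $\phi(0)=0$, so every element of the ball satisfies $\tilde u(0)=l$. It solves $\bar\phi''+\k\bar\phi'=F(t;U)$ separately on $(-\infty,0)$ and $(0,\infty)$ with $\bar\phi(0)=0$, and recovers $\bar c$ from the jump of $\bar\phi'$ at $0$:
\[
\bar c\,\zeta_0=\int_0^\infty f(s)\,ds+\int_{-\infty}^0 e^{\k s}f(s)\,ds,\qquad \zeta_0=\frac{aV_0}{c_0^3}e^{-\k}>0 .
\]
Up to normalization, $\zeta_0\neq0$ is the nondegeneracy you had in mind with $\pa c_0/\pa\eta\neq0$. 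Once the phase is pinned, the obstacle you identified disappears and a plain contraction works. $|V'|$ is of size $\b$ only in windows of width $\b^{-3/4}$ next to $t=-1$ and $t=0$. There, $\tilde u_p(0)=\tilde u_q(0)=l$ gives $|\tilde u_p(t+1)-\tilde u_q(t+1)|\le C\b^{-3/4}\|U_p-U_q\|_W$. These windows therefore contribute $\b\cdot\b^{-3/4}\cdot\b^{-3/4}=\b^{-1/2}$ to the Lipschitz constant, while elsewhere $V'$ and $V''$ are smaller than any power of $\b$. This yields $\|T(U_p)-T(U_q)\|_W\le C\b^{-1/2}\|U_p-U_q\|_W$ (Lemma~\ref{lemma_Lipschitz_T}), so Schauder is not needed. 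Your reflection reduction for the decreasing layer is fine.
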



Next, we state that there exists a {\it homoclinic solution} $u$ to (\ref{eq_u}), 
which is defined on $\r$ and satisfies $u (-\infty) = u (\infty)$.
We now provide the necessary condition for $\eta$
in the existence of a homoclinic solution $u$ that satisfies
\begin{equation}\label{eq_formalsol}
 u (t) \sim 
  \left\{
   \begin{aligned}  
    & u_i (t),
    & & t \sim 0, \\
    & u_d (t - t_*),
    & & t \sim t_*
   \end{aligned}
  \right.
\end{equation}
or
\begin{equation}\label{eq_formalsol2}
 u (t) \sim 
  \left\{
   \begin{aligned}  
    & u_d (t),
    & & t \sim 0, \\
    & u_i (t - t_*),
    & & t \sim t_*
   \end{aligned}
  \right.
\end{equation}
for a given $t_* > 0$.
Since $u_i (-\infty)$ must be consistent with $u_d (\infty)$, we obtain
\[
l + \eta + \dfrac{V_0}{a} \log ( 1 - \dfrac{a}{V_0} \eta ) = l - \eta,
\]
from which $\eta$ must satisfy 
\begin{equation}\label{eq_necessry_cond}
 2 \dfrac{a}{V_0} \eta = - \log ( 1 - \dfrac{a}{V_0} \eta ).
\end{equation}
Through elementary calculations, we can easily show that 
$2 X = - \log (1 - X)$ has a unique positive solution $X \approx 0.797$.
Then, $\eta_* \equiv V_0 X / a$ is a solution to (\ref{eq_necessry_cond}).
This condition also leads to $u_i (\infty) = u_d (-\infty)$
because of $u_d (t) = 2 l - u_i (t)$.
Hence, $\eta$ should be close to $\eta_*$ if there exists 
a homoclinic solution of (\ref{eq_u}) that satisfies either 
(\ref{eq_formalsol}) or (\ref{eq_formalsol2}).
Hereafter, we denote $c_* = c_0 (\eta_*)$.


\begin{theorem}\label{thm_4}
 For any $\e > 0$, let $R > 0$ be sufficiently large. 
 For such $\e$ and $R$, let $t_* > 0$ be a sufficiently large 
 constant independent of $\b$. 
 If $\b$ is sufficiently large, then (\ref{eq_u}) has a homoclinic solution 
 $(u, c)$ for some $\eta > 0$ that satisfies $u(0) = u(t_*) = l$. 
 Moreover, there exists $C > 0$ independent 
 of $\b, \e, R$, and $t_*$ such that $|c - c_*| \le C \e$, $|\eta - \eta_*| \le C \e$, 
 $|u (t) - u_i (t; \eta_*)| \le C \e$ 
 (resp. $|u (t) - u_d (t; \eta_*)| \le C \e$) in $t \le t_* - 2 R$,
 and $|u (t) - u_d (t - t_*; \eta)| \le C \e$ 
 (resp. $|u (t) - u_i (t - t_*; \eta)| \le C \e$) in $t_* - 2 R < t$.
\end{theorem}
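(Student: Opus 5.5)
We construct the homoclinic solution as a perturbation of the glued profile made from the two heteroclinic layers of Theorem~\ref{thm_1}. The free parameters are the speed $c$ and the amplitude $\eta$. The two pinning conditions $u(0)=u(t_*)=l$ will be enforced by a Lyapunov--Schmidt-type reduction.

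\textbf{Step 1: approximate solution.} For $\eta$ near $\eta_*$ and $c$ near $c_*$, we build an approximate solution $U$ by taking $u_i(t;\eta)$ for $t\le t_*-2R$ and $u_d(t-t_*;\eta)$ for $t> t_*-2R$. The junction is smoothed by a cutoff on $[t_*-2R-1,\,t_*-2R+1]$.

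In the singular limit the only mismatch at the junction is the constant gap between $u_i(\infty;\eta)=l+\eta$ and $u_d(-\infty;\eta)=l+\eta$. This gap vanishes identically. The other gap, between $u_d(\infty;\eta)=l-\eta$ and $u_i(-\infty;\eta)$, vanishes exactly when $\eta=\eta_*$ by (\ref{eq_necessry_cond}). Both layers approach their limiting constants like $e^{-\k|t|}$, so choosing $R$ large makes the residual from the cutoff $O(e^{-\k R})\le\e$.

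We also replace $u_i,u_d$ by the true $\b$-heteroclinics from Theorem~\ref{thm_1}. These are $O(\b^{-1/2})$-close to the singular layers, so the total residual of $U$ in (\ref{eq_u}) is $O(\e+\b^{-1/2})$, uniformly in $t_*$.

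\textbf{Step 2: linearization.} Write $u=U+w$ and linearize (\ref{eq_u}) about $U$:
\[
Lw=c^2w''+acw'-a\bigl(V'(U(t+1))w(t+1)-V'(U(t))w(t)\bigr).
\]
Away from the layers $V'(U)$ is exponentially small in $\b$, so $L$ is essentially $c^2w''+acw'$. This operator is invertible on bounded functions modulo constants.

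Near each layer, the linearization of each heteroclinic has a kernel generated by translation (the derivative of the layer). The free constant from the $c^2w''+acw'$ part plays the role of a shift in $\eta$. The step I expect to be hardest is establishing the Fredholm property and uniform invertibility of $L$ on a complement of these directions, because $V'(U)$ behaves like $\b e^{-\a\b|U-l|}$ and so acts as a delta-like coefficient. I would handle it as Theorem~\ref{thm_1} presumably does. The equation is integrated explicitly on the regions where $|U-l|\gg\b^{-1}$. Where the delta-like term is active, it is treated through its integral, which is the jump $V_0$ times the inverse of the slope $U'$ at the crossing. This should give bounds with constants independent of $\b$ and $t_*$, at the price of the $\b^{-1/2}$ loss.

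\textbf{Step 3: reduction and solvability.} We impose $w(0)=w(t_*)=0$, which removes the two translation directions. This leaves two solvability conditions, and the two free parameters $(c,\eta)$ are used to satisfy them. The leading-order map $(c,\eta)\mapsto(\text{conditions})$ is governed by
\[
c=c_0(\eta)\quad\text{and}\quad l+\eta+\tfrac{V_0}{a}\log\bigl(1-\tfrac{a}{V_0}\eta\bigr)=l-\eta.
\]
Its Jacobian at $(c_*,\eta_*)$ is nondegenerate, because $2X+\log(1-X)$ has nonzero derivative $2-1/(1-X)$ at $X\approx0.797$.

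We then solve the nonlinear problem $Lw=-\text{residual}+N(w)$ by a contraction in a ball of radius $C\e$, with $t_*$ taken large depending on $\e,R$ and $\b$ taken large last. The implicit function theorem gives $|c-c_*|,|\eta-\eta_*|\le C\e$. The stated estimates on $u-u_i(\cdot;\eta_*)$ and $u-u_d(\cdot-t_*;\eta)$ then follow from $|w|\le C\e$ together with the continuity of $u_i$ in $\eta$. The case (\ref{eq_formalsol2}) follows from the symmetry $u\mapsto 2l-u$ via (\ref{eq_H_symmetry}), after checking that assumption (A$_1$) gives this symmetry up to the exponentially small errors.
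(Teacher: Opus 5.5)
Your overall strategy is the same as the paper's: glue an increasing and a decreasing layer, pin $u(0)=u(t_*)=l$, and fix $(c,\eta)$ by a nondegenerate $2\times 2$ system. However, Step~3 is not consistent as you have set it up.

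In the space of Step~2 ($w$ bounded, $c^2w''+acw'$ invertible modulo constants), the pins $w(0)=w(t_*)=0$ leave only \emph{one} solvability condition, because the free constant absorbs one of them. (On bounded $w$ the operator has index one, and the delta-like terms are finite rank, so they do not change this.) Worse, $\partial_\eta U$ is bounded and vanishes at $0$ and $t_*$, since $u_i(0;\eta)=u_d(0;\eta)=l$ for every $\eta$. So $\partial_\eta U$ lies in your $w$-space. As you half-observe, the constant mode \emph{is} an $\eta$-shift, so $\eta$ is redundant and your $2\times 2$ Jacobian is not the Jacobian of the reduced problem.

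If you instead require $w\to 0$ at both ends, you get three conditions for two parameters, and the ansatz has no exact solution. Indeed, for (\ref{eq_u_infty}) the exact homoclinic crossing $l$ at $0$ and $t_*$ equals $u_i(\cdot;\eta')$ on $(-\infty,t_*-1)$, with $G(\eta')=\eta' e^{-\kappa t_*}\neq 0$. Here $G(\eta)=2\eta+\frac{V_0}{a}\log(1-\frac{a}{V_0}\eta)$ and $\kappa=a/c_0(\eta')$. So no single $\eta$ reproduces both of its end states.

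What works is an asymmetric space, as in the paper. The left layer is frozen at $\eta_*$, and $\eta$ moves only the right layer, so $u(+\infty)=l-\eta_*-\eta$. The norm controls $\phi$ itself on $I_3$ but only $\phi'$ on $I_1$, so $\phi(-\infty)$ floats. The junction mismatch goes into the outer corrector $\psi$. The two $C^1$-matching conditions are then solved with $\det M=-\frac{aV_0}{c_*^3}(1-\kappa)\kappa\neq 0$. Your nondegeneracy condition is in fact the same one: at $\eta_*$ one has $\kappa=2(1-e^{-\kappa})$, so $G'(\eta_*)=2-e^{\kappa}=e^{\kappa}(1-\kappa)$.

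You also never prove that the solution is homoclinic; $G(\eta)=0$ closes the orbit only to leading order. The missing idea is a conservation identity. Integrate (\ref{eq_u}) over $\r$ and telescope, $\int_\r (V(u(t+1))-V(u(t)))\,dt=V(u(\infty))-V(u(-\infty))$. This gives $h(u(\infty))=h(u(-\infty))$ for $h(x)=(c_*+c)x-V(x)$, which is strictly increasing on $x\le l-\rho$ because $|V'|\le C\b e^{-\a\b\rho}$ there. So closure is automatic and should not be imposed through $\eta$.

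Smaller points:
\begin{itemize}
\item The middle gap is not identically zero. Since $u_d(-\infty;\eta)=2l-u_i(-\infty;\eta)=l+\eta-G(\eta)$, both gaps have size $|G(\eta)|$, and the cutoff residual is $O(|G(\eta)|/R)$, not $O(e^{-\kappa R})$.
\item The $\b$-uniform step you defer needs no Fredholm theory. With the pinning, the delta-like part of the linearization vanishes to leading order. The paper never linearizes $V$: it keeps the whole $V$-term in $F_2$ and shows it is $O(\b^{-3/4})$ with Lipschitz constant $O(\b^{-1/2})$. This uses windows of width $\b^{-3/4}$ at the crossings together with $|\tilde u_p-\tilde u_q|\le C\min\{|t|,|t-t_*|,1\}\|U_p-U_q\|_{W_\infty}$.
\item Building $U$ from the heteroclinics of Theorem~\ref{thm_1} would require Lipschitz dependence on $\eta$ and a common speed for both layers. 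That theorem supplies neither.
\end{itemize}
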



Finally, we state the existence of a periodic solution to (\ref{eq_u}) 
and (\ref{eq_formalsol}) for period $N$.
However, in contrast to the heteroclinic and homoclinic solution cases, 
constructing periodic solutions to (\ref{eq_u}) using perturbation methods is difficult.
Therefore, we use the intrinsic properties of the OV model (\ref{eq_OV}) 
and introduce a modified mathematical model derived from (\ref{eq_u}).

The periodic solution $u$ in (\ref{eq_u}) must satisfy a certain constraint.
Assuming that all vehicles move in the positive direction of the circuit, 
the sum of their headways $\Delta x_n (\tau)$ for $n = 1, \ldots, N$ 
must be equal to the total length of the circuit.
Then, we have $x_{N+1} (\tau) - x_1 (\tau) = L$ for any $\tau$.
Since $\Delta x_n (\tau) = u (c \tau + n - 1)$, we set $c \tau \to t$ and see
\begin{equation}\label{eq_discrete_constraint}
  \sum_{n=1}^N u (t + n - 1) = L
\end{equation}
for any $t$.
Similar to the method followed in \cite{okamoto2024nonlinear}, 
we can show that (\ref{eq_discrete_constraint}) leads to
\begin{equation}\label{eq_continuous_constraint}
  \dfrac{1}{N} \int_0^N u (t) dt = \dfrac{L}{N}.
\end{equation}
Let $M$ be an arbitrary positive integer.
For $\Delta t = 1 / M$, we set $t \to t + k \Delta t$ for $k = 0, \ldots, M - 1$ 
in (\ref{eq_discrete_constraint}).
The sum of the resulting equation with respect to $k$ derives
\[
\sum_{l = 0}^{M N - 1} u (t + l \Delta t) \Delta t
= \sum_{k = 0}^{M-1} \sum_{n=1}^N u (t + n - 1 + k \Delta t) \Delta t = L.
\]
Taking the limit $M \to \infty$, we obtain (\ref{eq_continuous_constraint}).
From (\ref{eq_formalsol}), the left-hand side of (\ref{eq_continuous_constraint}) 
can be approximated by
\begin{equation}\label{eq_t0u0}
  \dfrac{1}{N} \int_0^N u (t) dt \approx
  (l + \eta) \dfrac{t_*}{N} + (l - \eta) \left( 1 - \dfrac{t_*}{N} \right).
\end{equation}

In the original physical setting of (\ref{eq_OV}), $L$ is a predetermined circuit 
length, which would naturally determine the parameter $t_*$ through 
the approximation (\ref{eq_t0u0}).
However, to construct the periodic solution to (\ref{eq_u}), we reverse this dependency.
We treat $t_*$ as an independent parameter of the solution and impose 
the strict equality
\begin{equation}\label{eq_const}
  \dfrac{1}{N} \int_0^N u (t) dt 
  = (l + \eta) \dfrac{t_*}{N} + (l - \eta) \left( 1 - \dfrac{t_*}{N} \right)
\end{equation}
as a primary constraint on $u$, rather than using (\ref{eq_continuous_constraint}).
Under this framework, we prove the existence of a periodic solution to (\ref{eq_u}).
Since $u$ is required to satisfy (\ref{eq_discrete_constraint}), 
the length $L$ is defined accordingly.
This ensures that (\ref{eq_continuous_constraint}) holds.
Therefore, by combining (\ref{eq_continuous_constraint}) with (\ref{eq_const}), 
the corresponding circuit length is naturally given by 
$L = (l + \eta) t_* + (l - \eta) ( N - t_* )$.


\begin{theorem}\label{thm_3}
 Assume that $t_* \neq N / 2$ satisfies 
 \begin{itemize}
  \item[(A$_2$)]
	       There exist $0 < t_1 < t_2 < 1$
	       independent of $\b$ and $N$ such that $N t_1 \le t_* \le N t_2$.
 \end{itemize}
 For any $\e > 0$, let $R > 0$ be sufficiently large. 
 If $\b$ and $N$ are sufficiently large, then (\ref{eq_u}) has a periodic solution 
 $(u, c)$ for some $\eta > 0$ satisfying $u(0) = u(t_*) = l$ and (\ref{eq_const}). 
 Moreover, there exists $C > 0$ independent 
 of $\b, \e, N$, and $R$ such that $|c - c_*| \le C \e$, $|\eta - \eta_*| \le C \e$,
 $|u (t) - u_i (t; \eta)| \le C \e$ in $- 2 R \le t \le t_* - 2 R$, and
 $|u (t) - u_d (t - t_*; \eta)| \le C \e$ in $t_* - 2 R \le t \le N - 2R$.
\end{theorem}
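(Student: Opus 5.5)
The plan is a gluing (Lyapunov--Schmidt type) construction in a space of $N$-periodic functions. The unknowns are $(u,c,\eta)$ with $u$ periodic of period $N$, subject to the pinning conditions $u(0)=u(t_*)=l$ and the constraint (\ref{eq_const}).

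\emph{Approximate solution.} Fix a smooth partition of unity on the circle $\r/N\z$ with transitions near $t=-R$ and $t=t_*-R$ (up to shifts of order $R$). Glue $u_i(t;\eta)$ on $[-2R, t_*-2R]$ to $u_d(t-t_*;\eta)$ on $[t_*-2R, N-2R]$, and extend periodically.
\begin{itemize}
\item Since $u_i$ and $u_d$ converge exponentially (rate $\k$) to their limits $l\pm\eta$ and $l+\eta+\frac{V_0}{a}\log(1-\frac a{V_0}\eta)$, the gluing error is $O(e^{-\k R})$ plus a mismatch proportional to $|\eta-\eta_*|$.
\item The condition (\ref{eq_necessry_cond}) is exactly what makes the far-field states agree, so the two layers match near $\eta=\eta_*$.
\item Replacing $H$ by $V$ costs an error controlled by (A$_1$). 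As in Theorem~\ref{thm_1}, this error is small in an $L^1$/weighted sense, of order $\b^{-1/2}$ after smoothing the layer where $u$ crosses $l$.
\end{itemize}
Choosing first $R$ large and then $\b$ large makes the residual $\le C\e$.

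\emph{Linearization.} Write $u=\bar u+w$ and linearize (\ref{eq_u}) about $\bar u$:
\[
\mathcal L w = c^2 w''+acw'-a\bigl(V'(\bar u(t+1))w(t+1)-V'(\bar u(t))w(t)\bigr).
\]
Away from the two layers, $V'(\bar u)$ is exponentially small in $\b$, so $\mathcal L$ reduces to $c^2w''+acw'$. Near each layer, I would reuse the invertibility estimates for the single-front linearization obtained in the proof of Theorem~\ref{thm_1}. There the kernel, coming from translation, was removed by the pinning $u(0)=l$, and $c$ was the free parameter absorbing one solvability condition.

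On the circle each front contributes one translation direction, removed by $u(0)=l$ and $u(t_*)=l$. The operator $c^2\pa_t^2+ac\pa_t$ also has the constants as kernel on the periodic space. This extra neutral direction is fixed by the integral constraint (\ref{eq_const}). The corresponding cokernel condition, the mean of the right-hand side, is absorbed by the additional parameter $\eta$. So we have three free parameters $(c,\eta,\text{constant})$ matched against three conditions.

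The key step is a uniform (in $N,\b$) bound for $\mathcal L^{-1}$ on the complement. I would prove it by splitting $w$ with cutoffs near each layer and in the plateaus. In the plateaus, the equation $c^2w''+acw'=f$ is solved explicitly with exponential decay of homogeneous parts. Near the layers, the single-front estimate applies. Since the layers are separated by distance $\ge N\min(t_1,1-t_2)\to\infty$ by (A$_2$), the interaction terms are $O(e^{-\k N t_1})$.

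\emph{Closing.} With the inverse bounded, the nonlinear problem becomes a contraction, or a Brouwer argument if Lipschitz bounds on the $V$-nonlinearity are only available in weak norms (the $\b^{-1/2}$ loss suggests this). It is posed for $(w,c,\eta)$ in a ball of radius $C\e$ around $(0,c_*,\eta_*)$.

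\emph{Main obstacle.} The hardest part is the reduced equation for $\eta$ coming from the mean-value solvability condition combined with (\ref{eq_const}). One must check that its derivative in $\eta$ is nondegenerate at $\eta_*$, uniformly in $N$. The derivative scales like $(2t_*-N)/N$ times a nonzero factor, coming from the weights $t_*/N$ and $1-t_*/N$ of the two plateau levels. This is where the assumption $t_*\ne N/2$ enters, together with (A$_2$), which makes the coefficient bounded away from zero unless $t_*/N\to1/2$. I would verify this by computing the mean of $\bar u$ explicitly via (\ref{eq2_ui_explicit}) and differentiating in $\eta$. The layers contribute only $O(1/N)$ corrections.

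Finally, the estimates $|u-u_i|\le C\e$ and $|u-u_d(\cdot-t_*)|\le C\e$ on the stated intervals, and $|c-c_*|,|\eta-\eta_*|\le C\e$, follow from the size of the fixed-point ball together with continuity of $c_0(\eta)$.
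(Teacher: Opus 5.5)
\textbf{There is a genuine gap at the step you call key: the uniform bound for the linearized problem.}

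You justify this bound by gluing the single-front inverses from Theorem~\ref{thm_1}. You assert that the two layers interact only through $O(e^{-\k N t_1})$ tails, and that homogeneous parts decay exponentially on the plateaus. That is false here. The operator $c^2\pa_t^2+ac\pa_t$ has the constant as a non-decaying homogeneous solution, and the single-front correction does not vanish on the left plateau (e.g.\ $\phi_0(t)=\eta a/c_0^2$ for $t<-1$).

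Such plateau constants are carried undamped across a plateau of length of order $N$. So is the mismatch between the plateau levels that the pinned layers produce at speed $c$ and the prescribed ones. Both change the derivative jump at the \emph{other} layer at leading order, not by $O(e^{-\k N t_1})$. This $O(1)$ coupling is what the paper's outer corrections $\psi,\tilde\psi$ and the weight $\tilde\varphi$ (saturating at $N$) keep track of. It is also why the coefficient of $c$ in the periodic matching conditions is proportional to $1-\k$, rather than the single-front $\zeta_0\propto e^{-\k}$; it is nonzero only because $\k=2X\approx 1.59$.

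Your plan never sets up this coupled reduced system, so neither its nondegeneracy nor an $N$-uniform bound follows. Note also that the proof of Theorem~\ref{thm_1} never inverts an operator containing $V'$. It inverts $\pa_t^2+\k\pa_t$ with a jump condition and treats all of $V$ as a perturbation, made small by the pinning. So there is no single-front estimate of the kind you propose to reuse.

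\textbf{Your parameter bookkeeping is also wrong.} The parameter $\eta$ does not occur in (\ref{eq_u}) at all, and the period-mean of its right-hand side vanishes identically. So there is no mean cokernel condition for $\eta$ to absorb; $\eta$ enters only through $u_*$. Your proposed check (differentiate the mean of $\bar u(\eta)$ in $\eta$) gives $(2t_*-N)/N+O(1/N)$. But this cancels against $\pa_\eta u_*$, which is built from the same plateau weights, so it detects nothing.

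When the unmodified problem is written in the natural unknowns $(c,\eta)$ with the two $C^1$ matching conditions, the reduced matrix is singular. At $I_0=0$ the paper's $M$ has $M_{12}=M_{22}=0$ and $M_{21}=-M_{11}$. Thus $\eta$ is invisible, and the two matching conditions are tied together by integrating the equation over a period. The linear problem (\ref{eq3_L}) then has a kernel in the $\eta$-direction and is solvable only under the compatibility condition $g_1+g_2=-\int_0^N f$.

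\textbf{The missing idea is the modified equation (\ref{eq_u_periodic}).} Adding $\frac{I_0}{N}\bigl(\frac1N\int_0^N u\,dt-u_*\bigr)$ makes $\eta$ enter the matching conditions through $I(U)\tilde\psi$. This gives $\det M\propto(1-\k)\frac{N-2t_*}{N}\neq 0$. Integrating over a period then shows that any periodic solution satisfies (\ref{eq_const}), so the added term vanishes and $u$ solves (\ref{eq_u}).

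If you want to avoid this modification, you would at least have to do the following:
\begin{itemize}
\item pose the linear problem on the circle for $C^1$ functions with mean-zero data, so that compatibility is automatic;
\item keep $c$ as the only parameter entering the equation;
\item compute the coupled coefficient proportional to $1-\k$ explicitly;
\item let (\ref{eq_const}) serve solely to determine $\eta$.
\end{itemize}
None of this is in your proposal.
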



\begin{remark}
 In the statements of Theorems~\ref{thm_4} and \ref{thm_3}, 
 $N$ and $t_*$ can be taken independently of $\b$.
\end{remark}


We show how from the periodic solution $(u, c)$ of (\ref{eq_u}) and (\ref{eq_const})
with $c > 0$, we can  derives a solution $\{ x_n \}_{n=1}^N$ of (\ref{eq_OV}) 
using the relation $\Delta x_n (\tau) = u (c \tau + n - 1)$ and setting $x_1 (0) = x_0$.
Let $v = v (\tau; p)$ be a unique periodic solution with a period $\omega = N / c$ 
for a given continuous function $p = p (\tau)$ in 
\begin{equation}\label{eq1_genelral_vp}
 \left\{
 \begin{aligned}
  & \dot v + a v = p, \\
  & v (0; p) = \frac{1}{e^{a \omega} - 1} \int_0^\omega e^{as} p (s) ds.
 \end{aligned}
 \right.
\end{equation}
Here, we set $v (\tau) \equiv v (\tau; p)$ for $p (\tau) = a V (u (c \tau))$.
Then, $\dot x_1 (\tau)$ is given by 
the periodic solution $v (\tau)$ in (\ref{eq1_genelral_vp}).
Hence, $x_1 (\tau)$ is given by 
\[
x_1 (\tau) = x_0 + \int_0^\tau \dot x_1 (s) ds = x_0 + \int_0^\tau v (s) ds.
\]
The relationship $x_{n+1} (\tau) - x_n (\tau) = \Delta x_n (\tau) = u (c \tau + n - 1)$
enables us to reconstruct $x_{n+1}$ for $n = 1, \ldots, N-1$ using
\[
x_{n+1} (\tau) = x_1 (\tau) + \sum_{k=1}^n u (c \tau + k - 1)
= x_0 + \int_0^\tau v (s) ds + \sum_{k=1}^n u (c \tau + k - 1).
\]
This construction provides a systematic method for obtaining the full solution 
based on the reduced dynamics.

Mathematical approaches to traffic flow have been aimed at revealing
mechanisms underlying congestion formation, stability, 
and collective driving behavior.
Thus a number of analytical frameworks 
have been developed at both the macroscopic and microscopic levels.
Macroscopic traffic flow models, originating from seminal works 
\cite{lighthill1955kinematicI, lighthill1955kinematicII} 
and subsequently extended in \cite{payne1979critical}, 
rely on conservation laws to describe mass density and momentum.
They provide system-wide insights into vehicle interactions that supports 
optimization of traffic flow, congestion mitigation
and accident reductions \cite{piccoli2009vehicular}.
These models have been extensively researched, and their mathematical development 
has been comprehensively surveyed in review articles such as \cite{helbing2001traffic}. 
Despite the remarkable progress, analytical approaches for macroscopic models 
typically remain confined to neighborhoods near bifurcation points.
Rigorous mathematical results in the parameter regimes far from these 
thresholds are scarce.
In this context, a recent study analyzed the macroscopic models 
proposed by \cite{kuhne1987freeway, lee2001macroscopic}, 
successfully constructing a traveling wave solution corresponding to the 
congestion phase \cite{ikeda2025existence}, thereby providing a strict 
mathematical guarantee for the bifurcation 
structure as originally suggested by \cite{lee2004steady}. 
Analytical investigations of the microscopic traffic models yielded 
notable results for special cases.
Whitham \cite{whitham1990exact} established the exact solution 
for the Newell model \cite{newell1961nonlinear} by revealing its connection 
to the Toda lattice \cite{toda1967vibration}, a perspective further developed 
in \cite{igarashi1999toda}.
Hasebe et al. \cite{hasebe1999exact} derived another exact solution using 
elliptic functions. 
However, these successes rely heavily on the specific mathematical structure 
of the Newell model, rendering generalizing such analytical results 
to other microscopic models difficult.
Consequently, rigorous mathematical analysis remains insufficient 
for most microscopic models, including the OV model (\ref{eq_OV}).

The remainder of this paper is organized as follows.
In Section~\ref{sec_Heteroclinic}, we consider the existence of 
the heteroclinic solution of (\ref{eq_u}) under the boundary condition (\ref{eq_BC}). 
To prove Theorem~\ref{thm_1}, we set an arbitrary constant $0 < \eta < V_0 / a$ 
and require that the approximate solution $(u_i, c_0)$ is close to the true 
solution $(u, c)$. 
We also obtain a first-order approximation of the error, 
denoted by $\phi_0$ (see (\ref{eq2_phi0})). 
In fact, as seen in (\ref{eq2_definition_zeta0}), the function $\phi_0$ fails 
to be $C^1$ at $t=0$ because of the discontinuity in the step function $H$. 
To construct the true solution using an iterative method in the functional space $W$, 
we divide the interval $\r$ into two subintervals $I_1$ and $I_2$,
and solve (\ref{eq2_phi_modify}) separately for each interval because the solution 
must satisfy one of the conditions in (\ref{eq_BC}), namely, $u(0) = l$.
The space $W$ is defined using the weighted functions $\bar \varphi_\pm$, 
and each $U \in W$ is measured using the standard sup-norm
$\| \varphi \|_I$ on an interval $I$, which is defined by 
$\| \varphi \|_I \equiv \sup_{t \in I} |\varphi(t)|$
for a (piecewise) continuous function $\varphi$ on $I$. 
This construction enables us to introduce the mapping $T$ 
(see Lemma~\ref{lemma_estimate_T}). 
Finally, we demonstrate that $T$ is a contraction mapping 
(Lemma~\ref{lemma_Lipschitz_T}).
Hence, the proof of the theorem is completed using the contraction mapping theorem
\cite{MR1814364}.

In Section~\ref{sec_homoclinic}, we establish Theorem~\ref{thm_4}. 
The proof is similar to that of Theorem~\ref{thm_1}, 
again using an iterative scheme applied to (\ref{eq4_phi_modify}). 
However, unlike in the previous case, we require not only the approximate errors 
near $t = 0$ and $t = t_*$, but also an additional error function $\psi$ 
away from these points (see Lemma~\ref{lemma4_linearsol}).

In Section~\ref{sec_periodic}, we establish Theorem~\ref{thm_3}. 
In this setting, the previously mentioned iterative strategy cannot be used.
When the linearized equation in (\ref{eq_u}) is introduced around
the approximate profile, the associated linearized operator is not invertible
(compare (\ref{eq4_detM}) with (\ref{eq3_detM})).
Such a situation also arises in reaction-diffusion systems such 
as in the Allen--Cahn equation \cite{allen1979microscopic}, which 
is a prototypical reaction-diffusion model that describes the phase separation 
and interface motion in multi-component materials. 
Formulated as a gradient flow of the double-well potential,
it exhibits energy dissipation and generates diffuse interfaces 
whose sharp interface limit is governed by mean curvature flow \cite{evans1992phase}. 
Its steady state, traveling fronts, and interface dynamics have been 
extensively studied (\cite{carr1989metastable, fife1977approach}).
Constraint problems were also considered in the Allen--Cahn equation 
\cite{chen2010mass, golovaty1997volume}.
In particular, the relationship between the domain size in which 
the stable states reside and the structure of the nonlinear term 
was investigated by \cite{bronsard1997volume}, 
assuming the existence of a transition layer connecting the two stable equilibria.
When their results are applied to a one-dimensional case, we find the existence 
of stable periodic solutions of the Allen--Cahn equation
whose spatial periods are determined by the prescribed conserved quantity.
Motivated by \cite{bronsard1997volume}, we take the constraint (\ref{eq_const}) 
into account and introduce the modified equation (\ref{eq_u_periodic}).
As discussed at the beginning of Section~\ref{sec_periodic}, 
the periodic solution to (\ref{eq_u_periodic}) becomes
that of (\ref{eq_u}) and (\ref{eq_const}).
We will construct a periodic solution by applying a perturbation method 
to (\ref{eq_u_periodic}).

We conclude this section with a few remarks on the limiting case. 
Although the step function $V(x) = V_0 H (x-l) + V_1$ itself 
is not continuous at $x = l$, 
it is possible to construct homoclinic and periodic solutions of (\ref{eq_u}) 
based on $(u_i, c_0)$ and $(u_d, c_0)$. 
Specifically, the following two theorems remain valid. 
As their proofs follow the same arguments as those in Sections~\ref{sec_homoclinic} 
and \ref{sec_periodic}, we omit these details.


\begin{theorem}\label{thm_H1}
 For any $\e > 0$, let $R > 0$ be sufficiently large. 
 If $t_* > 0$ is sufficiently large, then (\ref{eq_u_infty}) has a homoclinic solution 
 $(u, c)$ for some $\eta > 0$ that satisfies $u(0) = u(t_*) = l$.
 Moreover, there exists $C > 0$ independent of $\e, R$, and $t_*$ 
 such that $|c - c_*| \le C \e$, $|\eta - \eta_*| \le C \e$, 
 $|u (t) - u_i (t; \eta_*)| \le C \e$ 
 (resp. $|u (t) - u_d (t; \eta_*)| \le C \e$) in $t \le t_* - 2 R$,
 and $|u (t) - u_d (t - t_*; \eta)| \le C \e$ 
 (resp. $|u (t) - u_i (t - t_*; \eta)| \le C \e$) in $t_* - 2 R < t$.
\end{theorem}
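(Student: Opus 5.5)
Since $V$ is now exactly the step function, the plan is to solve (\ref{eq_u_infty}) essentially by hand rather than by the contraction scheme of Section~\ref{sec_homoclinic}. The only nonlinearity in (\ref{eq_u_infty}) is which side of $l$ the values $u(t)$ and $u(t+1)$ lie on. For the profile (\ref{eq_formalsol}) we want $u>l$ exactly on $(0,t_*)$ and $u<l$ elsewhere. Under that sign ansatz the right-hand side of (\ref{eq_u_infty}) is the explicit piecewise-constant function
\[
F(t) = a V_0\bigl(\chi_{[-1,0)}(t) - \chi_{[t_*-1,t_*)}(t)\bigr),
\]
and (\ref{eq_u_infty}) becomes the linear constant-coefficient equation $c^2u''+acu'=F$. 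We solve it in closed form as a variation-of-constants integral with kernel $\frac{1}{a c}(1-e^{-\k(t-s)})$, $\k=a/c$. The unknowns are the parameters $(c,\eta)$ together with the two integration constants.

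The steps are as follows.

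\emph{Boundedness.} Since $\int F=0$, the solution bounded as $t\to\pm\infty$ is unique up to an additive constant. For $t<-1$ it equals a constant $u_-$; for $t\ge t_*$ it equals $u_+ + K e^{-\k(t-t_*)}$. Homoclinicity forces $K=0$ and $u_+=u_-$.

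\emph{Conditions at the layers.} Impose $u(0)=l$ and $u(t_*)=l$. Using the explicit formulas, this gives two scalar equations $G_1(c,\eta,t_*)=0$ and $G_2(c,\eta,t_*)=0$. Here $\eta$ enters as the plateau height parametrization $u\approx l+\eta$ on $[0,t_*]$, analogously to (\ref{eq2_ui_explicit}), and the homoclinic condition $u(-\infty)=u(+\infty)$ is built in.

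\emph{Limit problem.} Compute the functions $G_j$. On the interval between the layers the solution is $l+\eta+O(e^{-\k t}) + O(e^{-\k(t_*-t)})$. As $t_*\to\infty$, the system $G=0$ reduces to $c=c_0(\eta)$ together with the matching condition (\ref{eq_necessry_cond}). Its solution is $(c_*,\eta_*)$.

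\emph{Perturbation.} The Jacobian of the limit system at $(c_*,\eta_*)$ is nonzero. This is because the map $X\mapsto 2X+\log(1-X)$ has nonzero derivative $2-1/(1-X)$ at $X\approx0.797$, where it equals $2-1/0.203\neq0$. The implicit function theorem then yields $(c,\eta)$ with $|c-c_*|+|\eta-\eta_*|\le Ce^{-\k t_*}\le C\e$ for large $t_*$.

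\emph{Sign verification.} We must check that the sign ansatz holds, i.e.\ $u>l$ on $(0,t_*)$ and $u<l$ outside $[0,t_*]$. Near $t=0$ the solution is $u_i(\cdot;\eta)$ plus an $O(e^{-\k t_*})$ correction. The transversality $u_i'(0)=\k\eta>0$, together with the fact that $u_i$ stays away from $l$ outside a neighborhood of $0$, controls the region $|t|\le 2R$; the analogous argument applies near $t_*$ with $u_d$. Away from the layers, $u$ is uniformly close to $l\pm\eta$, which gives the sign there.

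\emph{Error bounds.} The estimates $|u-u_i(\cdot;\eta_*)|\le C\e$ on $t\le t_*-2R$ and $|u-u_d(\cdot-t_*;\eta)|\le C\e$ on $t>t_*-2R$ follow from the explicit formula. The tails $e^{-\k R}$ are made smaller than $\e$ by the choice of $R$. The case (\ref{eq_formalsol2}) follows from the symmetry $u\mapsto 2l-u$ in (\ref{eq_H_symmetry}).

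The main obstacle is the sign verification, together with the regularity issue that $H$ is discontinuous. A solution is understood as $C^1$ with $u''$ piecewise continuous. We must ensure that $u$ crosses $l$ only at $0$ and $t_*$, and transversally, so that $H(u(\cdot)-l)$ and $H(u(\cdot+1)-l)$ really produce $F$. I would first try to obtain this from the monotonicity of the explicit $u_i$ on $[-1,\infty)$: by (\ref{eq2_ui_explicit}) it is increasing there and constant below $l$ for $t<-1$. The $O(e^{-\k t_*})$ perturbation is handled by choosing $t_*$ large relative to $R$.
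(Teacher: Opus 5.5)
Your route differs from the paper's and can be made rigorous, but the boundedness step is wrong as written. Taken literally, it makes your system of conditions inconsistent.

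\textbf{The false step.} Homoclinicity does not force $K=0$. On $[t_*,\infty)$ one has $K=-u'(t_*)/\kappa$, and the explicit solution gives $u'(t_*)=-\frac{V_0}{c}(1-e^{-\kappa})(1-e^{-\kappa t_*})<0$. Indeed, $K=0$ would make $u$ constant on $[t_*,\infty)$, so $l=u(t_*)=u_+=u_-<l$. What is true is that homoclinicity costs nothing. For the bounded solution $u'(\pm\infty)=0$, so integrating $c^2u''+acu'=F$ over $\mathbb{R}$ gives $ac\,(u(\infty)-u(-\infty))=\int F=0$.

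\textbf{The correct count.} $\eta$ does not occur in (\ref{eq_u_infty}), so the only unknowns are $c$ and the additive constant. The condition $u(0)=l$ fixes the constant. Since
\[
u(t_*)-u(0)=\frac{V_0}{a}(1-e^{-\kappa})(2-e^{-\kappa t_*})-\frac{V_0}{c},
\]
the condition $u(t_*)=l$ reduces to the single scalar equation $\kappa=(1-e^{-\kappa})(2-e^{-\kappa t_*})$, with $\kappa=a/c$. At $t_*=\infty$ this is (\ref{eq_necessry_cond}) with $X=1-e^{-\kappa}$. It is nondegenerate at $\kappa_*$ because $1-2e^{-\kappa_*}=2X_*-1\neq 0$, which is equivalent to your Jacobian computation. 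One then sets $\eta=\frac{V_0}{a}(1-e^{-\kappa})$, i.e.\ $c=c_0(\eta)$. If this is what you meant by the plateau parametrization, your $2\times 2$ system in $(c,\eta)$ is equivalent, but only without the extra condition $K=0$.

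\textbf{The remaining steps.} With this correction the rest is easier than you expect. On $(-\infty,t_*-1]$ the forcing coincides with that of $u_i$, so $u=u_i(\cdot;\eta)$ exactly there. For $t\ge 0$ one checks directly that $u(t)=u_d(t-t_*;\eta)+\eta(e^{-\kappa t_*}-e^{-\kappa t})$. Together with $|c-c_*|+|\eta-\eta_*|\le Ce^{-\kappa_* t_*}$, this gives all the stated bounds with error $O(e^{-\kappa(t_*-2R)})$. The small quantity on $(t_*-2R,t_*-1)$ is $e^{-\kappa(t_*-2R)}$; it is controlled by taking $t_*$ large relative to $R$, not by $e^{-\kappa R}$.

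The sign verification you flag as the main obstacle needs no perturbation argument:
\begin{itemize}
\item $u'=0$ on $(-\infty,-1)$;
\item $u'>0$ on $(-1,t_*-1]$;
\item $u''=-\kappa(u'+V_0/c)<0$ on $(t_*-1,t_*)$;
\item $u'<0$ on $[t_*,\infty)$.
\end{itemize}
So $u$ is unimodal, and $u(0)=u(t_*)=l$ yields exactly the sign pattern you assumed.

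\textbf{Comparison with the paper.} The paper gives no separate proof of Theorem~\ref{thm_H1}; it defers to the argument of Section~\ref{sec_homoclinic}. That argument glues $u_i(\cdot;\eta_*)$ and $u_d(\cdot-t_*;\eta_*+\eta)$ with cut-offs and adds the correctors $\phi_{0i},\phi_{0d}$ and the outer solution $\psi$. It then obtains $(\phi,c,\eta)$ as a fixed point in $W_\infty^\e$, using the invertibility (\ref{eq4_detM}). That machinery is built for smooth steep $V$ (Theorem~\ref{thm_4}) and yields only $O(\e)$ closeness. Your approach exploits the fact that for the exact step function the equation becomes linear once the sign pattern is fixed, so everything reduces to one transcendental equation in $\kappa$. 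It gives an explicit solution, exponentially small errors and an elementary sign check, but it does not extend to non-step $V$.
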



\begin{theorem}\label{thm_H2}
 Assume that $t_* \neq N / 2$ satisfies (A$_2$).
 For any $\e > 0$, let $R > 0$ be sufficiently large. 
 If $N$ is sufficiently large, then (\ref{eq_u_infty}) has a periodic solution 
 $(u, c)$ for some $\eta > 0$ satisfying $u(0) = u(t_*) = l$ and (\ref{eq_const}). 
 Moreover, there exists $C > 0$ independent of $\e, N$, and $R$ 
 such that $|c - c_*| \le C \e$, $|\eta - \eta_*| \le C \e$,
 $|u (t) - u_i (t; \eta)| \le C \e$ in $- 2 R \le t \le t_* - 2 R$,
 and $|u (t) - u_d (t - t_*; \eta)| \le C \e$ in $t_* - 2 R \le t \le N - 2R$.
\end{theorem}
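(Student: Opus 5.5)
We prove Theorem~\ref{thm_H2}, the periodic solution of the limiting equation (\ref{eq_u_infty}) with the step nonlinearity. The plan follows the route of Section~\ref{sec_periodic}, with the smoothing errors in $\b$ removed.

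\textbf{Approximate solution and modified problem.} We glue the two layers into an $N$-periodic approximate solution. On $[-2R, t_*-2R]$ we use $u_i(t;\eta)$, and on $[t_*-2R, N-2R]$ we use $u_d(t-t_*;\eta)$, extended $N$-periodically. In the gluing regions both profiles are within $C e^{-\k R}$ of the plateau values $l\pm\eta$, up to the mismatch $u_i(-\infty;\eta)-(l-\eta)$. That mismatch vanishes exactly at $\eta=\eta_*$ by (\ref{eq_necessry_cond}), and it is $O(|\eta-\eta_*|)$ nearby. Following the modified equation (\ref{eq_u_periodic}), we add to (\ref{eq_u_infty}) a Lagrange-type term enforcing the constraint (\ref{eq_const}); the unknowns are then $(u,c,\eta)$ together with the multiplier.

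\textbf{Piecewise-linear reduction.} The key simplification relative to the smooth case is that the step nonlinearity makes the problem piecewise linear. We impose the sign structure $u>l$ on $(0,t_*)$ and $u<l$ on $(t_*,N)$ modulo $N$, with $u(0)=u(t_*)=l$. Then $H(u(t)-l)$ and $H(u(t+1)-l)$ are fixed indicator functions, with jumps at $0,t_*$ and at $-1,t_*-1$ respectively. Equation (\ref{eq_u_infty}) thus becomes a linear second-order ODE with piecewise constant forcing $aV_0(\chi_{[-1,t_*-1)}-\chi_{[0,t_*)})$. For given $c$ it can be solved explicitly by variation of constants under $N$-periodicity. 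We expect the periodic solution to be unique up to an additive constant, since constants lie in the kernel of $c^2\pa_t^2+ac\pa_t$.

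We fix the constant and the remaining parameters by three scalar conditions: $u(0)=l$, $u(t_*)=l$, and (\ref{eq_const}); the unknowns are the additive constant, $c$, and $\eta$. Compatibility of the periodic forcing is automatic, because the forcing has zero mean. The explicit formulas show that the solution equals $u_i(\cdot;\eta)$ and $u_d(\cdot-t_*;\eta)$ up to errors of size $O(e^{-\k t_*}+e^{-\k(N-t_*)})$. By (A$_2$) these errors are $O(e^{-\k t_1 N})$, which is below $\e$ once $N$ is large.

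\textbf{Solving the algebraic system.} The three conditions reduce to a map $F(c,\eta)=0$ whose leading part, as $N\to\infty$, is the limiting system. That system has the nondegenerate solution $(c_*,\eta_*)$: the map $X\mapsto 2X+\log(1-X)$ has nonzero derivative at $X\approx0.797$, and $c_0'(\eta)\ne0$. The implicit function theorem, or a simple degree argument, then gives a solution with $|c-c_*|+|\eta-\eta_*|\le C\e$ for $N$ large. We believe the hypothesis $t_*\ne N/2$ is what keeps the constraint (\ref{eq_const}) independent of the conditions $u(0)=u(t_*)=l$, so that the corresponding determinant, the analogue of (\ref{eq3_detM}), is nonzero.

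\textbf{Main obstacle.} Finally we must verify the sign structure a posteriori: $u>l$ on $(0,t_*)$ and $u<l$ elsewhere, so that the assumed indicator functions are consistent. This holds because the approximating profiles are strictly monotone near their zeros, with $u_i'(0)=\k\eta>0$, and stay a fixed distance from $l$ away from their zeros, while the error is $O(\e)$. This consistency check, together with the nondegeneracy of $F$ at $(c_*,\eta_*)$, is the step we expect to be the main obstacle. The estimates of Theorem~\ref{thm_H2} then follow from the explicit formulas.
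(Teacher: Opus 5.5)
Your argument is correct, but it takes a genuinely different route from the paper's.

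\textbf{The paper's route.} The paper proves this theorem by rerunning Section~\ref{sec_periodic} with the step function in place of $V$. It uses the cut-off glued profile $u_0$, the corrections $\phi_{0i},\phi_{0d}$, the outer solutions $\psi,\tilde\psi$, and the penalized equation (\ref{eq_u_periodic}) with the $I_0$ term. It closes with a contraction on $W_N^\e$, which works because $\det M\propto(1-\k)(N-2t_*)/N\neq0$ in (\ref{eq3_detM}).

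\textbf{Your route.} You use the fact that, once the sign pattern is fixed, $v=u'$ is the unique $N$-periodic solution of $v'+\k v=(aV_0/c^2)(\chi_{[-1,0)}-\chi_{[t_*-1,t_*)})$, with $\k=a/c$. Everything is then explicit. The Lagrange term can simply be dropped, since this forcing has zero mean. Your system is in fact triangular:
\begin{itemize}
\item $u(t_*)=u(0)$ involves only $c$. To leading order it reads $\k=2(1-e^{-\k})$, which is (\ref{eq_necessry_cond}) and has a simple root at $\k=a/c_*$.
\item $u(0)=l$ then fixes the additive constant.
\item (\ref{eq_const}) then gives $\eta$ explicitly.
\end{itemize}

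\textbf{What your route buys.} You get exponentially small errors, far better than $C\e$, with no cut-offs, no $R$, and no $I_0$.

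You also get uniformity as $t_*\to N/2$, provided you compute $\eta$ from the formula rather than by a generic implicit-function argument. The layer contributions to $\frac1N\int_0^N u\,dt$ cancel by $u_d=2l-u_i$. So the factor $N/(2t_*-N)$ multiplies only an odd, exponentially small function of $t_*-N/2$, and $\eta=\eta_*+O(Ne^{-\k\min\{t_*,N-t_*\}})$ at the root. The paper's $\det M$, by contrast, degenerates as $t_*\to N/2$.

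\textbf{Your ``main obstacle''.} This step is also easy from the explicit formulas, but use derivative information rather than sup-norm closeness; sup-norm closeness alone cannot decide the sign of $u-l$ near the zeros.
\begin{itemize}
\item On $(0,t_*-1)$, $u'$ is a positive multiple of $e^{-\k t}$, so $u>l$ there.
\item On $(t_*-1,t_*)$, $v$ solves $v'=-\k v-aV_0/c^2$ starting from $v(t_*-1)>0$, so it is strictly decreasing. Hence $u$ is unimodal there with $u(t_*-1)>l=u(t_*)$, and so $u>l$ on that interval.
\item The interval $(t_*,N)$ is handled symmetrically.
\end{itemize}

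\textbf{Trade-off.} The paper's perturbative route does not rely on exact solvability, and it is literally the proof of Theorem~\ref{thm_3} for smooth $V$. Yours is special to the step nonlinearity, but considerably more elementary and sharper.
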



\section{Heteroclinic solution}\label{sec_Heteroclinic}

In this section, we will construct a heteroclinic solution of (\ref{eq_u}) 
with (\ref{eq_BC}).
The same argument as in this section enables us to prove the existence of 
a heteroclinic solution of (\ref{eq_u}) with (\ref{eq_BC_2}).
Let $0 < \eta < V_0 / a$ be arbitrary.
Set $c \to c_0 + c$.
We derive the lowest-order term of $c$ close to $0$ in (\ref{eq_u_infty}).
Substituting $u (t) = u_i (t) + \phi (t)$ into (\ref{eq_u_infty}), we formally obtain
\begin{equation}\label{eq2_lowest_ceta}
 \phi'' (t) + \k \phi' (t) + \dfrac{2 c}{c_0} u_i'' (t) 
  + \dfrac{a c}{c_0^2} u_i' (t) = 0,
  \quad t \in I_1 \cup I_2,
\end{equation}
where $I_1 \equiv (- \infty, 0)$ and $I_2 \equiv (0, \infty)$.
Keeping (\ref{eq_BC}) in mind, we solve (\ref{eq2_lowest_ceta}) under 
\begin{equation}\label{eq2_StrumLiouville_BC}
 \begin{aligned}
  & \phi \in C^1 (\overline{I_1}) \cap C^1 (\overline{I_2}),
  \quad \phi (0) = 0, \\
  & \lim_{t \to - \infty} |\phi' (t)| \bar \varphi_- (t) < \infty,
  \quad \lim_{t \to \infty} |\phi (t)| \bar \varphi_+ (t) < \infty,  
 \end{aligned}
\end{equation}
where $\bar \varphi_\pm (t) = e^{\pm \k t / 2}$ for $\k = a / c_0$.
It is easy to see that a unique solution $\phi = c \phi_0$ exists, 
where $\phi_0$ is explicitly given by 
\begin{equation}\label{eq2_phi0}
 \phi_0 (t) = 
  \left\{
   \begin{aligned}
    & \dfrac{\eta a}{c_0^2},
    & & t < -1, \\
    & - \dfrac{V_0}{c_0^2} t
    + \dfrac{1}{c_0^2} ((1 + t) e^{- \k t} - 1) (V_0 - a \eta), 
    & & -1 \le t < 0, \\
    & - \dfrac{\eta a}{c_0^2} t e^{- \k t}, 
    & & t \ge 0.
   \end{aligned}
  \right.
\end{equation}
Moreover, it holds that 
\begin{equation}\label{eq2_definition_zeta0}
 \phi_0' (- 1 + 0) - \phi_0' (- 1 - 0) = \zeta_0
  \equiv \dfrac{a V_0}{c_0^3} e^{- \k} > 0,
\end{equation}
which implies that $(\phi_0, \zeta_0)$ can be thought of as a solution of 
(\ref{eq2_lowest_ceta}) and (\ref{eq2_definition_zeta0}).
When we consider (\ref{eq2_lowest_ceta}) and (\ref{eq2_definition_zeta0}) 
under (\ref{eq2_StrumLiouville_BC}) by replacing $u_i$ into $u_d$,
the solution can be given by $- (\phi_0, \zeta_0)$ 
by the same argument as in (\ref{eq_H_symmetry}).

Let $\tilde u (t) = u_i (t) + c \phi_0 (t) + \phi (t)$.
Substituting $\tilde u (t)$ into (\ref{eq_u}), we have 
\begin{equation}\label{eq2_phi}
 \phi'' (t) + \k \phi' (t) = F (t; U),
  \quad \quad t \in I_1 \cup I_2,
\end{equation}
where $U = (\phi, c)$, 
\[
F (t; U) \equiv F_1 (t; U) + \dfrac{a}{(c_0 + c)^2} F_2 (t; U),
\]
\[
\begin{aligned}
 F_1 (t; U) 
 & \equiv 
 - a \left( \dfrac{1}{c_0 + c} - \dfrac{1}{c_0} \right) (c \phi_0' (t) + \phi' (t)) \\
 & - \left( \dfrac{c^2}{(c_0 + c)^2} 
 + \dfrac{2 c_0 c}{(c_0 + c)^2} - \dfrac{2 c}{c_0} \right) u_i'' (t)
 - a c \left( \dfrac{1}{(c_0 + c)^2} - \dfrac{1}{c_0^2} \right) u_i' (t) 
\end{aligned}
\]
and 
\[
F_2 (t; U) \equiv V (\tilde u (t + 1) ) - V (\tilde u (t))
- V_0 (H (u_i (t + 1) - l) - H (u_i (t) - l)).
\]
Since $\tilde u$ must be of class $C^1$, we also consider 
the matching condition at $t = 0$, given by 
\begin{equation}\label{eq2_C1matching_phi}
 \phi' (+0) - \phi' (-0) = - c \zeta_0.
\end{equation}

We will show the existence of $U = (\phi, c)$ satisfying (\ref{eq2_phi})
and (\ref{eq2_C1matching_phi}) in a functional space $W$, given by 
\[
W \equiv \{ U = (\phi, c) \in C (\r) \times \r \mid 
\phi \in C^1 (\overline{I_1}) \cap C^1 (\overline{I_2}),
\ \phi (0) = 0 \},
\]
equipped with the norm $\| U \|_W \equiv \| \phi \| + |c|$,
where the norm $\| \cdot \|$ is given by 
$\| \phi \| \equiv \| \phi' \bar \varphi_- \|_{I_1}
+ \| \phi \bar \varphi_+ \|_{I_2}
+ \| \phi' \bar \varphi_+ \|_{I_2}$.
Actually, we can also estimate $\phi$ on $I_1$ 
by using $\| \phi' \bar \varphi_- \|_{I_1}$.
Thanks to $\phi (0) = 0$, 
\begin{equation}\label{ea2_poincare_ineq}
 \phi (t) = - \int_t^0 \phi' (s) ds
\end{equation}
for $t < 0$, which implies that there is $C > 0$ such that 
\begin{equation}\label{eq2_phi_sup}
 \| \phi \|_{I_1} \le C \| \phi' \bar \varphi_- \|_{I_1}.
\end{equation}
In particular, it follows from (\ref{ea2_poincare_ineq}) 
that $\phi (-\infty)$ exists.
Clearly, $W$ is Banach space.
Moreover, we define a closed ball in $W$ by 
$W_0 \equiv \{ U \in W \mid \| U \|_W \le \b^{-1/2} \}$.
We will find a solution $U$ in $W_0$.

In order to obtain the solution of (\ref{eq2_phi}), 
let us formulate our problem as follows.
For any $U \in W_0$, we first find a solution 
$\bar U = (\bar \phi, \bar c) \in W_0$
satisfying a problem as the modification of (\ref{eq2_phi}) 
and (\ref{eq2_C1matching_phi}), given by 
\begin{equation}\label{eq2_phi_modify}
 \bar \phi'' (t) + \k \bar \phi' (t) = F (t; U),
  \quad t \in I_1 \cup I_2
\end{equation}
and 
\begin{equation}\label{eq2_C1matching_phi_modify}
 \bar \phi (+ 0) - \bar \phi (- 0) = - \bar c \zeta_0
\end{equation}
(see Lemma~\ref{lemma_linearsol}).
Using the solution $\bar U$, we define $T$ mapping from $W_0$ into $W_0$
by $T (U) = \bar U$ (see Lemma~\ref{lemma_estimate_T}).
Secondly, we will show that there exists a fixed point $U$ of $T$
by applying the contraction mapping theorem to $T$ 
(see Lemma~\ref{lemma_Lipschitz_T}),
which completes the proof of Theorem~\ref{thm_1}.
Throughout this section, we denote general positive constants 
independent of $\b$ and $U$ by $C$.

To begin with, we study a linear equation associated with the left-hand side 
of (\ref{eq2_phi_modify}).
Define $W' \equiv C ((-\infty, -1]) \cap C ([-1, 0]) \cap C ([0, \infty))$,
equipped with the norm $\| \cdot \|_{W_\infty'}$ given by 
$\| f \|_{W'} \equiv \| f \bar \varphi_- \|_{I_1} + \| f \bar \varphi_+ \|_{I_2}$.
We find a solution $U = (\phi, c) \in W$ for $f \in W'$ such that $U$ satisfies
\begin{equation}\label{eq2_L}
 \left\{
  \begin{aligned}
   & \phi'' (t) + \k \phi' (t) = f,
   \quad t \in I_1 \cup I_2, \\
   & \phi (+ 0) - \phi (- 0) = - c \zeta_0,
  \end{aligned}
 \right.
\end{equation}
where we omit the bars above the letters in the expressions for simplicity.


\begin{lemma}\label{lemma_linearsol}
 Let $f \in W'$.
 Then there exists a unique solution $U \in W$ of (\ref{eq2_L}).
 Moreover, there exists $C > 0$ independent of $\b$ such that 
 $\| U \|_W \le C \| f \|_{W'}$.
\end{lemma}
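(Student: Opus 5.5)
The plan is to solve (\ref{eq2_L}) explicitly by variation of constants on each half-line, since the operator $\phi'' + \k\phi'$ has constant coefficients. The problem then separates into three pieces: one constant of integration on $I_1$, one on $I_2$, and the scalar $c$. I read the matching condition in (\ref{eq2_L}) as the derivative jump $\phi'(+0) - \phi'(-0) = -c\zeta_0$, consistent with (\ref{eq2_C1matching_phi}); continuity of $\phi$ and $\phi(0)=0$ are already built into $W$. Since $f$ may jump at $t=-1$, all formulas are understood piecewise, and $\phi \in C^1$ on each $\overline{I_j}$ still holds because we integrate $f$ once to get $\phi'$. Throughout, let $K = \| f \|_{W'}$, so that $|f(s)| \le K e^{\k s/2}$ for $s<0$ and $|f(s)| \le K e^{-\k s/2}$ for $s>0$. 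Note that $\k = a/c_0$ and $\zeta_0$ depend only on $a, V_0, \eta$, not on $\b$.

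\emph{On $I_1$.} Every solution satisfies
\[
\phi'(t) = e^{-\k t}\Bigl(B - \int_t^0 e^{\k s} f(s)\,ds\Bigr).
\]
Since $e^{-\k t}$ blows up as $t \to -\infty$, the weighted condition $\sup_{I_1} |\phi'|e^{-\k t/2} < \infty$ forces $B = \int_{-\infty}^0 e^{\k s} f(s)\,ds$; this integral converges because $|e^{\k s}f(s)| \le K e^{3\k s/2}$. Hence
\[
\phi'(t) = \int_{-\infty}^t e^{-\k(t-s)} f(s)\,ds,
\qquad |\phi'(t)| \le \tfrac{2K}{3\k}\, e^{\k t/2},
\]
and we set $\phi(t) = -\int_t^0 \phi'(s)\,ds$. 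This gives $\|\phi'\bar\varphi_-\|_{I_1} \le CK$ and $\phi(0)=0$.

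\emph{On $I_2$.} Here
\[
\phi'(t) = e^{-\k t}\Bigl(A + \int_0^t e^{\k s} f(s)\,ds\Bigr).
\]
Because $\int_0^t e^{\k s}|f| \le \tfrac{2K}{\k} e^{\k t/2}$, we get $|\phi'(t)| \le (|A| + CK) e^{-\k t/2}$ for every $A$. The constant $A$ is instead fixed by requiring both $\phi(0)=0$ and $\phi e^{\k t/2}$ bounded (so $\phi(\infty)=0$). This means $\int_0^\infty \phi' = 0$, i.e.
\[
A = -\k \int_0^\infty e^{-\k t}\int_0^t e^{\k s} f(s)\,ds\,dt,
\]
and the double integral is bounded by $CK$. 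With this choice, $\phi(t) = -\int_t^\infty \phi'(s)\,ds$ satisfies $|\phi(t)| \le C(|A|+K)e^{-\k t/2}$. Therefore $\|\phi\bar\varphi_+\|_{I_2} + \|\phi'\bar\varphi_+\|_{I_2} \le CK$.

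\emph{The scalar $c$.} Finally $c$ is determined uniquely by the jump condition:
\[
c = -\bigl(\phi'(+0) - \phi'(-0)\bigr)/\zeta_0 = -(A - B)/\zeta_0 .
\]
Since $\zeta_0 > 0$ by (\ref{eq2_definition_zeta0}), this gives $|c| \le CK$. Collecting the bounds yields $\|U\|_W \le C\|f\|_{W'}$ with $C$ depending only on $\k$ and $\zeta_0$, hence independent of $\b$.

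\emph{Uniqueness.} Take $f=0$. On $I_1$ the solutions are $\phi' = Be^{-\k t}$, and the weight forces $B=0$. On $I_2$ we have $\phi' = Ae^{-\k t}$, and $\phi(0)=0=\phi(\infty)$ forces $A=0$. Then the jump condition gives $c=0$.

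The only delicate point is the asymmetric treatment of the two half-lines. On $I_1$ the weight is imposed on $\phi'$, which kills the growing mode. On $I_2$ the decaying mode is free, and it is the two-point condition ($\phi(0)=0$ together with decay of $\phi$) that fixes $A$. Once that is done, the bounds are elementary exponential integrals.
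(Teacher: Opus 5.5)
Your proposal is correct and follows the paper's own proof. You use variation of constants on each half-line. The constant on $I_1$ is fixed by the weighted decay of $\phi'$ at $-\infty$, and the constant on $I_2$ by $\phi(0)=0$ together with boundedness of $\phi\bar\varphi_+$. Your $B=\int_{-\infty}^0 e^{\k s}f(s)\,ds$ and $A=-\int_0^\infty f$ reproduce (\ref{eq2_dphi1}), (\ref{eq2_phi2}) and $c\zeta_0=\int_0^\infty f+\int_{-\infty}^0\varphi_+ f$ in (\ref{eq2_c}). Reading the matching condition as a jump in $\phi'$ is what the paper's proof actually uses, and your explicit exponential bounds and uniqueness check fill in what the paper leaves as ``similarly.''
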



\begin{proof}
 Let $\varphi_\pm (t) = e^{\pm \k t}$.
 Note that $\varphi_-$ is a fundamental solution of (\ref{eq2_L})
 and $\varphi_+ = 1 / \varphi_-$.
 Denote the solution of the first equation of (\ref{eq2_L}) 
 on $I_j$ by $\phi_j$ for $j = 1, 2$.
 The solution $\phi_1$ on $I_1$ is given by 
 \[
 \phi_1 (t) = A (\varphi_- (t) - 1)
 + \varphi_- (t) \int_t^0 \dfrac{1}{\k} \varphi_+ (s) f (s) ds
 - \int_t^0 \dfrac{1}{\k} f (s) ds
 \]
 for some $A \in \r$.
 Differentiating the both sides above, we have 
 \[
 \phi_1' (t) = - \k A \varphi_- (t)
 - \varphi_- (t) \int_t^0 \varphi_+ (s) f (s) ds.
 \]
 From the third condition of (\ref{eq2_StrumLiouville_BC}),
 $A$ is determined by 
 \[
 A = - \int_{-\infty}^0 \dfrac{1}{\k} \varphi_+ (s) f (s) ds.
 \]
 Then we have
 \begin{equation}\label{eq2_phi1}
  \phi_1 (t) = \int_{-\infty}^0 \dfrac{1}{\k} \varphi_+ (s) f (s) d s
   - \varphi_- (t) \int_{-\infty}^t \dfrac{1}{\k} \varphi_+ (s) f (s) ds
   - \int_t^0 \dfrac{1}{\k} f (s) ds.
 \end{equation}
 From (\ref{eq2_phi1}), we have
 \begin{equation}\label{eq2_dphi1}
  \phi_1' (t) = \varphi_- (t) \int_{-\infty}^t \varphi_+ (s) f (s) ds.
 \end{equation}
 Similarly, we solve (\ref{eq2_L}) on $I_2$ under $\phi_2 (0) = 0$ 
 and $\lim_{t \to \infty} |\phi_2 (t)| \bar \varphi_+ (t) < \infty$.
 Then $\phi_2$ is given by 
 \begin{equation}\label{eq2_phi2}
  \phi_2 (t) = \varphi_- (t)
   \int_0^{\infty} \dfrac{1}{\k} f (s) ds
   - \varphi_- (t) \int_0^t \dfrac{1}{\k} \varphi_+ (s) f (s) ds
   - \int_t^{\infty} \dfrac{1}{\k} f (s) ds.
 \end{equation}
 Let $\phi$ be a function defined by 
 $\phi = \phi_j$ on $\overline{I_j}$ for $j = 1, 2$.
 Finally, from the second equation of (\ref{eq2_L}), we obtain
 \begin{equation}\label{eq2_c}
  c \zeta_0 = - (\phi_2' (0) - \phi_1' (0))
   = \int_0^{\infty} f (s) d s + \int_{-\infty}^0 \varphi_+ (s) f (s) d s,
 \end{equation}
 which determines $c$.

 We estimate $U$.
 It follows from (\ref{eq2_dphi1}) that 
 $\| \bar \varphi_- \phi_1' \|_{I_1} \le C \| f \|_{W'}$ on $I_1$.
 Similarly, by using (\ref{eq2_phi2}) and (\ref{eq2_c}), 
 we complete the proof of the lemma.
\end{proof}


Next, we study $F (t; U)$.


\begin{lemma}\label{lemma_estimate_F}
 For $U \in W$, $F (t; U) \in W'$.
\end{lemma}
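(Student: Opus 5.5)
The plan is to check directly the two requirements for $F(t;U)\in W'$. First, $F(\cdot;U)$ must be continuous on each of $(-\infty,-1]$, $[-1,0]$, $[0,\infty)$ (one-sided continuity up to the endpoints). Second, the weighted norm $\|F\bar\varphi_-\|_{I_1}+\|F\bar\varphi_+\|_{I_2}$ must be finite. Since $F=F_1+\frac{a}{(c_0+c)^2}F_2$, it suffices to treat $F_1$ and $F_2$ separately. We may assume $|c|<c_0$ so that the coefficients are finite; this certainly holds for $U\in W_0$.

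\emph{The term $F_1$.} It is a linear combination, with coefficients depending only on $c$, of $\phi_0'$, $\phi'$, $u_i''$ and $u_i'$. From the explicit formulas (\ref{eq_u}), (\ref{eq2_ui_explicit}) and (\ref{eq2_phi0}), each of $u_i$ and $\phi_0$ is smooth on each of the three pieces, with possible jumps in derivatives only at $t=-1$ and $t=0$. Moreover $\phi\in C^1(\overline{I_1})\cap C^1(\overline{I_2})$.

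For the weights:
\begin{itemize}
\item On $t<-1$, $u_i$ and $\phi_0$ are constant, so only $\phi'$ contributes. Its contribution $\|\phi'\bar\varphi_-\|_{I_1}$ is finite by the definition of $\|\cdot\|$.
\item On $[-1,0]$, all terms are bounded on a compact interval.
\item On $t>0$, $u_i',u_i''$ and $\phi_0'$ are $O((1+t)e^{-\k t})$, so multiplied by $e^{\k t/2}$ they remain bounded. The term $\phi'\bar\varphi_+$ is controlled by $\|\phi\|$.
\end{itemize}

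\emph{The term $F_2$.} This is the only place where a little care is needed. Continuity on each piece follows because $V\in C^2$ and $\tilde u$ is continuous. The step-function term $H(u_i(t+1)-l)-H(u_i(t)-l)$ is piecewise constant: it equals $1$ on $[-1,0)$ and $0$ elsewhere, since $u_i<l$ for $t<0$ and $u_i\ge l$ for $t\ge0$. Its only jumps are at $t=-1$ and $t=0$, which are exactly the breakpoints allowed in $W'$. Boundedness is immediate because $V$ is bounded by (A$_1$).

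For the decay as $t\to\infty$, we have $F_2=V(\tilde u(t+1))-V(\tilde u(t))$ there. By the mean value theorem this is bounded by $\sup|V'|\,|\tilde u(t+1)-\tilde u(t)|$. Here $\tilde u(t+1)-\tilde u(t)=O(e^{-\k t}(1+t))+|\phi(t+1)|+|\phi(t)|$, and $|\phi(t)|\le\|\phi\|e^{-\k t/2}$. So $F_2\bar\varphi_+$ is bounded, with a $\b$-dependent constant coming from $\sup|V'|\le C_0\b$; this is harmless for mere membership in $W'$.

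The same argument works for $t<-1$, where $\tilde u(t+1)-\tilde u(t)=\phi(t+1)-\phi(t)=\int_t^{t+1}\phi'$. This gives $|F_2|\le C\b\|\phi\|e^{\k t/2}$, so $F_2\bar\varphi_-=F_2e^{-\k t/2}$ is bounded.

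The main (mild) obstacle is this decay estimate for $F_2$ at $\pm\infty$. It requires using the difference structure together with the weighted control of $\phi'$ on $I_1$ and of $\phi$ on $I_2$, rather than only the boundedness of $V$. Quantitative, $\b$-uniform bounds are not needed here; they are deferred to the estimate of $T$ (Lemma~\ref{lemma_estimate_T}).
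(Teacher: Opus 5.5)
Your proposal is correct and follows the paper's argument: $F_1$ is handled directly from the explicit forms of $u_i$, $\phi_0$ and the weighted norm of $\phi$, and $F_2$ via the mean value representation $F_2=\bigl(\int_0^1V'(\bar x)\,d\theta\bigr)\bigl(\tilde u(t+1)-\tilde u(t)\bigr)$ with $|V'|\le C_0\b$ and the weighted decay of the increment of $\tilde u$, which you spell out in more detail than the paper does. One small slip is that the identity $\tilde u(t+1)-\tilde u(t)=\phi(t+1)-\phi(t)$ holds only for $t\le-2$ (for $t\in(-2,-1)$ the terms $u_i(t+1)$ and $\phi_0(t+1)$ still vary), but that compact interval is already covered by your remark that $V$ is bounded.
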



\begin{proof}
 It is trivial that $F_1 (t; U) \in W'$.
 On the other hand, we have
 \begin{equation}\label{eq2_F2_0}
  F_2 (t; U) = V (\tilde u (t + 1)) - V (\tilde u (t))
   = \left( \int_0^1 V' (\bar x) d \theta \right)
   \left( \int_t^{t+1} \tilde u' (\tau) d \tau \right),
 \end{equation}
 where $\bar x \equiv \theta \tilde u (t + 1) + (1 - \theta) \tilde u (t)$.
 Since $V'$ is bounded, we easily see $F_2 (t; U) \in W'$ so that $F (t; U) \in W'$.
\end{proof}


Lemmas~\ref{lemma_linearsol} and \ref{lemma_estimate_F} 
lead to the existence of a unique solution 
$\bar U \in W$ of (\ref{eq2_phi_modify}) and (\ref{eq2_C1matching_phi_modify}).
Then a mapping $T$ is defined by $\bar U = T (U)$.


\begin{lemma}\label{lemma_estimate_T}
 For $U \in W_0$, $T (U) \in W_0$.
\end{lemma}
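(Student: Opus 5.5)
By Lemma~\ref{lemma_linearsol}, $\|T(U)\|_W \le C\|F(\cdot;U)\|_{W'}$. So the whole task is to show $\|F(\cdot;U)\|_{W'} = o(\b^{-1/2})$, or at least $\|F(\cdot;U)\|_{W'}\le C\b^{-1}+C\b^{-1/2}\cdot(\text{small})$, uniformly for $U\in W_0$. Then $\|T(U)\|_W\le \b^{-1/2}$ once $\b$ is large. I would split $F=F_1+a(c_0+c)^{-2}F_2$ and estimate the two pieces separately in the weighted norm $\|f\|_{W'}=\|f\bar\varphi_-\|_{I_1}+\|f\bar\varphi_+\|_{I_2}$.

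\emph{The term $F_1$.} Every coefficient in $F_1$ is at least quadratic in the small quantities. The factor $\frac1{c_0+c}-\frac1{c_0}=O(|c|)$ multiplies $c\phi_0'+\phi'$, which is $O(\b^{-1/2})$. The bracket in front of $u_i''$ is $O(c^2)$, since its linear part cancels: $\frac{2c_0c}{(c_0+c)^2}-\frac{2c}{c_0}=O(c^2)$. Likewise $c\big((c_0+c)^{-2}-c_0^{-2}\big)=O(c^2)$. The functions $u_i'$, $u_i''$ and $\phi_0'$ are explicit from (\ref{eq2_ui_explicit}) and (\ref{eq2_phi0}). They vanish for $t<-1$ and decay like $te^{-\k t}$ for $t>0$, so their $W'$-norms are finite. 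The weighted norm of $\phi'$ is controlled by $\|\phi\|$ directly. Together these give $\|F_1\|_{W'}\le C\b^{-1}$.

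\emph{The term $F_2$, which is the main obstacle.} I would write
\[
F_2=\big[V(\tilde u(t+1))-V_0H(u_i(t+1)-l)-V_1\big]-\big[V(\tilde u(t))-V_0H(u_i(t)-l)-V_1\big]
\]
and split $\r$ into a region near the zeros of $u_i-l$ (at $t=0$ and $t+1=0$) and its complement.

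Away from $t\in[-\delta,\delta]\cup[-1-\delta,-1+\delta]$ with $\delta=\b^{-1/2}$, I would use two facts. First, $u_i'(0)=\k\eta>0$, and $u_i-l$ stays bounded away from $0$ on $t<-1$, since $u_i(-\infty)<l$. Hence $|u_i(s)-l|\ge c_1\min(|s|,1)$. Second, $|\tilde u-u_i|\le C\b^{-1/2}$. It follows that $|\tilde u(s)-l|\ge c_1\delta/2$ when $|s|\ge\delta$, and that $\tilde u-l$ has the same sign as $u_i-l$. By (A$_1$), each bracket is then at most $C_0e^{-\a\b\cdot c_1|s|/2}$ for $s$ in this region. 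Multiplying by the weights $\bar\varphi_\pm$, which are $O(1)$ near $[-1,0]$, gives a bound $Ce^{-c\sqrt\b}$ on $[-1-\delta,\delta]^c$. For large $|t|$ the exponentially small tail of $V-V_0H-V_1$ controls the weight $e^{\k|t|/2}$, because we only need $\a\b c_1 |u_i(s)-l|\gg\k$. On $t>0$ the quantity $u_i-l\to\eta$ stays bounded below, and on $t<-1$ it equals the constant $u_i(-\infty)-l<0$.

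Near the zeros I would instead use the mean value form (\ref{eq2_F2_0}), but this gives only $|F_2|\le C$, which is not small. This is the delicate point. On a window of length $2\delta=2\b^{-1/2}$ the sup norm of $F_2$ is $O(1)$, not small, so a pure sup-norm estimate fails. The remedy I would try is to avoid passing through $\|F\|_{W'}$ and estimate $T(U)$ directly from the integral representations (\ref{eq2_phi1}), (\ref{eq2_phi2}), (\ref{eq2_c}). These involve only integrals of $f$ against bounded kernels, or integrals $\int_{-\infty}^t$, $\int_t^\infty$. The contribution from the $O(\delta)$ windows is then $\le C\delta=C\b^{-1/2}$ times an $O(1)$ bound. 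More precisely, $\int|V(\tilde u)-V_0H-V_1|\,ds$ over a window is $O(\b^{-1})$, by the change of variables $x=\tilde u(s)$ with $\tilde u'\ge\k\eta/2$ there and $\int|V-V_0H-V_1|dx\le C\b^{-1}$. The window contribution is therefore $O(\b^{-1})$, and the constant in front of $\b^{-1/2}$ is strictly less than $1$ for large $\b$.

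So the key step is to replace the sup-type bound on $F$ with a bound in an $L^1$-weighted form. I would check that the linear solution operator of Lemma~\ref{lemma_linearsol} is bounded from that $L^1$-type norm to $W$, which is immediate from the formulas. Combining with the $F_1$ bound gives $\|T(U)\|_W\le C\b^{-1}\le\b^{-1/2}$ for $\b$ large, and hence $T(U)\in W_0$.
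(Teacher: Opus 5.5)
There is a genuine gap in your far-field treatment of $F_2$. You split $F_2$ into the two brackets $V(\tilde u(\cdot))-V_0H(u_i(\cdot)-l)-V_1$ on all of $\r$. You then assert each bracket is at most $C_0e^{-\a\b c_1|s|/2}$, so that ``the exponentially small tail controls the weight $e^{\k|t|/2}$.'' But your own lower bound is $|u_i(s)-l|\ge c_1\min(|s|,1)$. For $|s|\ge1$ this gives only the $s$-independent bound $C_0e^{-\a\b c_1/2}$. Smallness in $\b$ is not decay in $t$, and after multiplying by $\bar\varphi_\pm(t)=e^{\k|t|/2}$ the bound is unbounded for every $\b$.

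In fact neither bracket belongs to $W'$. As $t\to\infty$, $V(\tilde u(t))-V_0-V_1\to V(l+\eta)-V_0-V_1$, which is nonzero in general; for (\ref{eq_OVfunction_tanh}) it equals $\frac{V_0}{2}(\tanh(\b\eta)-1)$. The same happens as $t\to-\infty$. The decay in $t$ comes only from the cancellation inside $V(\tilde u(t+1))-V(\tilde u(t))$.

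The missing step is the one the paper uses on $t\le-3/2$ and $t\ge1$: keep the difference and write it in the mean-value form (\ref{eq2_F2_0}). There $\bar x$ stays a fixed distance $\rho$ from $l$, so $|V'(\bar x)|\le C\b e^{-\a\b\rho}$. Also $\bar\varphi_\pm(t)\int_t^{t+1}|\tilde u'|\,d\tau\le C$ by (\ref{eq2_ui_explicit}), (\ref{eq2_phi0}) and $U\in W_0$. Together these give $\bar\varphi_\pm|F_2|\le C\b e^{-\a\b\rho}$ on the unbounded parts. The bracket splitting should be used only on a bounded set around $[-1,0]$, where the weights are harmless.

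There is also a smaller slip. From $|\tilde u-u_i|\le C\b^{-1/2}$ and $|u_i(s)-l|\ge c_1\min(|s|,1)$ you cannot infer $|\tilde u(s)-l|\ge c_1\delta/2$ for $|s|\ge\delta=\b^{-1/2}$, because nothing forces $C<c_1/2$. Instead use $\tilde u(0)=l$ and $\tilde u'\ge\k\eta/2$ near $0$ together with the mean value theorem, as the paper does.

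Apart from these points, your near-zone idea is correct and differs usefully from the paper. Take a fixed neighborhood of $0$ on which $\tilde u$ is increasing, so that $H(u_i(s)-l)=H(\tilde u(s)-l)$ there. The change of variables $x=\tilde u(s)$ then gives an $L^1$ bound of at most $4C_0/(\k\eta\a\b)$. The integral formulas (\ref{eq2_phi1})--(\ref{eq2_c}) turn this into an $O(\b^{-1})$ contribution to $\|T(U)\|_W$.

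The paper instead cuts out windows of width $\b^{-3/4}$. It bounds $|f_2|\le C$ inside them and by $e^{-\bar\a\b^{1/4}}$ outside, and ends with $O(\b^{-3/4})$. Either route suffices for $T(U)\in W_0$ once the far field is handled as above.
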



\begin{proof}
 Let $f_k (t) = F_k (t; U)$ for $k = 1, 2$.
 From Lemmas~\ref{lemma_linearsol} and \ref{lemma_estimate_F} 
 we have a unique solution $\bar U_k = (\bar \phi_k, \bar c_k) \in W$ 
 of (\ref{eq2_L}) for $f (t) = f_k (t)$.
 It is easy to see $\bar U_1 \in W_0$ from Lemma~\ref{lemma_linearsol} because
 $\| \phi' \|_{W'} \le \b^{-1/2}$.

 We show $\bar U_2 \in W_0$.
 Let $\bar x$ be given in (\ref{eq2_F2_0}).
 We will pay more careful attention to the estimate of $\bar U_2$ than $\bar U_1$.
 Let $\bar \phi_{2j} (t) = \bar \phi_2 (t) |_{I_j}$ for $j = 1, 2$.
 We first consider $\bar \phi_{21}$.
 Since $u_i$ increases monotonically, we see
 \begin{equation}\label{eq2_F2_1}
  u_i (t) \le u_i (- \dfrac12) < l,
   \quad |c \phi_0 (t) + \phi (t)| \le C \b^{-1/2}
 \end{equation}
 in $t \le - 1 / 2$ by (\ref{ea2_poincare_ineq}).
 Then, there is $\rho > 0$ independent of $\b$ such that 
 $\bar x \le l - \rho$ in $t \le - 3 / 2$.
 Then we see that $|V' (\bar x)| \le C \b e^{- \a \b \rho}$.
 It follows from (\ref{eq2_ui_explicit}), (\ref{eq2_phi0}), 
 and $\phi \in W_0$ that 
 \[
 \bar \varphi_- (t) \left| \int_t^{t+1} \tilde u' (\tau) d \tau \right| \le C.
 \]
 Then we obtain $\bar \varphi_- (t) |f_2 (t)| \le C \b e^{- \a \b \rho}$
 in $t \le - 3 / 2$
 so that $\bar \varphi_- (t) |\bar \phi_{21}' (t)| \le C \b e^{- \a \b \rho}$ 
 in $t \le - 3 / 2$.

 Next, we consider $- 3 / 2 \le t \le -1 - \b^{-3/4}$.
 Since $l = \tilde u (0)$, there exists $- 1 / 2 \le t + 1 < \bar t < 0$ such that 
 \[
 \tilde u (t + 1) - l = \tilde u' (\bar t) (t + 1)
 \le - \dfrac12 \b^{-3/4} \min_{- 1/2 \le t \le 0} u_i' (t) < 0
 \]
 by the mean value theorem.
 This and (\ref{eq2_F2_1}) imply that 
 $\bar x \le l - \b^{-3/4} \min_{- 1/2 \le t \le 0} u_i' (t) / 2$.
 By setting $\bar \a = \a \min_{- 1/2 \le t \le 0} u_i' (t) / 2$, we have
 $|V' (\bar x)| \le C \b e^{- \bar \a \b^{1/4}}$.
 Then $|f_2 (t)| \le C \b^{-3/4}$.
 On the other hand, $|f_2 (t)| \le C$ 
 in $-1 - \b^{-3/4} \le t \le -1$ because $V$ is bounded.
 Hence we obtain
 \[
 \int_{-3/2}^{-1} \varphi_+ (s) |f_2 (s)| ds
 = \int_{-3/2}^{-1 - \b^{-3/4}} \varphi_+ (s) |f_2 (s)| ds
 + \int_{-1 - \b^{-3/4}}^{-1} \varphi_+ (s) |f_2 (s)| ds \le C \b^{-3/4} 
 \]
 so that $|\bar \phi_{21}' (t)| \le C \b^{-3/4}$ in $-3/2 \le t \le -1$.

 We next consider $-1 \le t \le 0$.
 Then $f_2$ is rewritten into $f_2 (t) = f_{21} (t) - f_{22} (t)$, 
 where $f_{21} (t) = V (\tilde u (t + 1)) - V_0 - V_1$ 
 and $f_{22} (t) = V (\tilde u (t)) - V_1$.
 In $- 1 \le t \le -1 + \b^{-3/4}$, $|f_{21} (t)| \le C$.
 Next, we consider $-1 + \b^{-3/4} \le t \le 0$.
 By the mean value theorem, there exists $\b^{-3/4} < \bar t < 1$ such that 
 \[
 \tilde u (t + 1) - l = \tilde u' (\bar t) (t + 1)
 \ge \dfrac12 \b^{-3/4} \min_{0 \le t \le 1} u_i' (t) > 0. 
 \]
 Setting $\bar \a = \a \min_{0 \le t \le 1} u_i' (t) / 2$, we have
 $|f_{21} (t)| \le C e^{- \bar \a \b^{1/4}}$ from the assumption (A$_1$).
 On the other hand, we can estimate $f_{22}$ 
 by the same arguments as for $f_{21}$ in $t \le -1$.
 Hence we obtain $|\bar \phi_{21}' (t)| \le C \b^{-3/4}$.

 In the same way as in $-1 \le t \le 0$, we easily see
 $|\bar \phi_{22} (t)| \le C \b^{-3/4}$
 and $|\bar \phi_{22}' (t)| \le C \b^{-3/4}$ in $0 \le t \le 1$.
 Finally we consider $t \ge 1$.
 We see 
 \begin{equation}\label{eq2_F2_3}
  u_i (t) \ge u_i (1) > l,
   \quad |c \phi_0 (t) + \phi (t)| \le C \b^{-1/2}.
 \end{equation}
 Then, there is $\rho > 0$ independent of $\b$ such that 
 $\bar x \ge l + \rho$ in $t \ge 1$.
 By the same arguments as in $t \le -3/2$, we have 
 $\bar \varphi_+ (t) |f_2 (t)| \le C \b e^{- \a \b \rho}$ in $t \ge 1$
 so that $\bar \varphi_+ (t) |\bar \phi_{22} (t)| \le C \b^{-3/4}$ 
 and $\bar \varphi_+ (t) |\bar \phi_{22}' (t)| \le C \b^{-3/4}$ in $t \ge 1$.

 From (\ref{eq2_c}), we obtain $|\bar c| \le C \b^{-3/4}$.
 Since $\b$ is sufficiently small, we see $\bar U \in W_0$.
\end{proof}


Next, we prove that If $\b$ is sufficiently small, $T$ is a contraction, 
which implies that $T$ has a fixed point $U \in W_0$, and leads to Theorem~\ref{thm_1}.


\begin{lemma}\label{lemma_Lipschitz_T}
 If $\b$ is sufficiently small, then there exists $C > 0$ independent of $\b$
 such that 
 \[
 \| T (U_p) - T (U_q) \|_W \le C \b^{-1/2} \| U_p - U_q \|_W
 \]
 in $U_p, U_q \in W_0$.
\end{lemma}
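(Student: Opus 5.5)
By linearity of the solution operator in Lemma~\ref{lemma_linearsol}, $T(U_p)-T(U_q)$ is the unique solution in $W$ of (\ref{eq2_L}) with right-hand side $f = F(\cdot;U_p)-F(\cdot;U_q)$. Lemma~\ref{lemma_linearsol} then gives
\[
\| T (U_p) - T (U_q) \|_W \le C \| F (\cdot; U_p) - F (\cdot; U_q) \|_{W'} ,
\]
so it suffices to prove $\|F(\cdot;U_p)-F(\cdot;U_q)\|_{W'} \le C\b^{-1/2}\|U_p-U_q\|_W$. We treat $F_1$ and the factor $a(c_0+c)^{-2}F_2$ separately, writing $U_p=(\phi_p,c_p)$ and $U_q=(\phi_q,c_q)$.

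\emph{The $F_1$ part.} $F_1$ is a smooth polynomial-type expression in $c$ and $\phi'$. Each coefficient multiplying $u_i''$, $u_i'$, $\phi_0'$, or $\phi'$ vanishes at least linearly in $c$. Examples are $\frac{1}{c_0+c}-\frac1{c_0}=O(c)$, and $\frac{c^2+2c_0c}{(c_0+c)^2}-\frac{2c}{c_0}=O(c^2)$, whose derivative in $c$ is $O(c)$. Since $|c_p|,|c_q|\le\b^{-1/2}$, differences of these coefficients are bounded by $C\b^{-1/2}|c_p-c_q|$, or by $C|c_p-c_q|$ when they multiply terms that are themselves $O(\b^{-1/2})$. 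The weighted norms of $u_i'$, $u_i''$, $\phi_0'$ in $W'$ are finite, because they decay like $e^{-\k t}$ as $t\to\infty$ and vanish for $t<-1$. Together with $\|\phi_p'\|,\|\phi_q'\|\le \b^{-1/2}$ in the weighted norms, this yields $\|F_1(U_p)-F_1(U_q)\|_{W'}\le C\b^{-1/2}\|U_p-U_q\|_W$. Likewise $\frac{a}{(c_0+c_p)^2}-\frac{a}{(c_0+c_q)^2}=O(|c_p-c_q|)$ multiplies $F_2(U_q)$. The proof of Lemma~\ref{lemma_estimate_T} shows $\|F_2(U_q)\|_{W'}$ is small (of order $\b^{-3/4}$ in the relevant integrated sense), so this cross term is also acceptable.

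\emph{The $F_2$ part.} This is the main obstacle. The $H$-terms cancel, leaving
\[
F_2(U_p)-F_2(U_q)=\bigl[V(\tilde u_p(t+1))-V(\tilde u_q(t+1))\bigr]-\bigl[V(\tilde u_p(t))-V(\tilde u_q(t))\bigr],
\]
where $\tilde u_p-\tilde u_q=(c_p-c_q)\phi_0+\phi_p-\phi_q=:\delta$, with $\|\delta\|_{\r}\le C\|U_p-U_q\|_W$ by (\ref{eq2_phi_sup}). Naively $V'=O(\b)$, which is too large, so we split into the same regions as in Lemma~\ref{lemma_estimate_T}.

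1. Away from the layers, i.e.\ for $t\le-3/2$ and $t\ge1$, both arguments stay at distance $\rho$ from $l$. The same holds near $t+1\approx 0$ except in thin windows. There $|V'|\le C\b e^{-\a\b\rho}$ or $C\b e^{-\bar\a\b^{1/4}}$, which gives pointwise bounds of size $o(\b^{-1/2})\|U_p-U_q\|_W$. In the tail the weight is handled as before, by writing the difference via $\int_t^{t+1}\delta'$ and using the $\bar\varphi_\pm$-control of $\phi'$ and of $\phi_0'$.

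2. In the windows of width $O(\b^{-3/4})$ around $t=-1$ and $t=0$ we only have $|V(\tilde u_p)-V(\tilde u_q)|\le C\b|\delta|$. However $\delta$ vanishes at the layer point: $\delta(0)=0$ because $\phi_0(0)=\phi(0)=0$. Hence $|\delta(t)|\le C|t|\,\|U_p-U_q\|_W\le C\b^{-3/4}\|U_p-U_q\|_W$ in the window, so the pointwise bound there is $O(\b^{1/4})\|U_p-U_q\|_W$.

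3. This pointwise bound is not small, so we will not use the sup norm of $f$ directly. Instead, as in the proof of Lemma~\ref{lemma_estimate_T}, we use the integral representations (\ref{eq2_dphi1}), (\ref{eq2_phi2}) and (\ref{eq2_c}). There only $\int\varphi_+|f|$ over a window of length $\b^{-3/4}$ enters, contributing $C\b^{1/4}\cdot\b^{-3/4}=C\b^{-1/2}$.

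To tighten this, we would use the monotone-layer structure: $\tilde u_p,\tilde u_q$ cross $l$ transversally with slope bounded below, so $\int |V'(\bar x)|\,dt\le C$ across the window by change of variables. This gives the $W$-norm contribution $C\b^{-3/4}\cdot\b\cdot\b^{-3/4}\le C\b^{-1/2}$ in $\|U_p-U_q\|_W$.

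Combining all contributions gives the Lipschitz constant $C\b^{-1/2}$, where ``$\b$ sufficiently small'' in the statement is read as ``$\b^{-1}$ sufficiently small''.
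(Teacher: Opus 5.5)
Your proposal is correct and is essentially the paper's proof. It uses the same $F_1/F_2$ splitting and regions, and the same key step in the windows of width $\b^{-3/4}$ around $t=-1$ and $t=0$: there $|V'|\le C\b$ is compensated by $|\delta|\le C\b^{-3/4}\|U_p-U_q\|_W$ (since $\delta$ vanishes at the layer), and only the integrals in (\ref{eq2_dphi1})--(\ref{eq2_c}) over these windows enter, giving $C\b^{-1/2}$. You also handle the prefactor term $\bigl(a(c_0+c_p)^{-2}-a(c_0+c_q)^{-2}\bigr)F_2(\cdot;U_q)$, which the paper skips.

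Two things to tidy. First, the opening reduction to $\|F(\cdot;U_p)-F(\cdot;U_q)\|_{W'}\le C\b^{-1/2}\|U_p-U_q\|_W$ is false in the sup-type norm of $W'$, as your step 3 concedes, so work with the integral formulas from the start. Second, in the weighted tails $t\le-3/2$, $t\ge 1$, the Lipschitz dependence of $\int_0^1V'(\bar x_r)\,d\theta$ on $r$ requires the $V''$ bound of (A$_1$), exactly as in (\ref{eq2_d2V_meanvalue}); the $V'$ bound together with $\int_t^{t+1}\delta'$ is not enough.
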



\begin{proof}
 Let $U_r = (\phi_r, c_r)$, $\bar U_r = (\bar \phi_r, \bar c_r) = T (U_r)$
 for $r = p, q$.
 Also, we denote $\tilde u_r (t) \equiv u_i (t) + c_r \phi_0 (t) + \phi_r (t)$.
 for simplicity.
 From Lemmas~\ref{lemma_linearsol} and \ref{lemma_estimate_F},
 we have a unique solution 
 $\bar U_{rk} = (\bar \phi_{rk}, \bar c_{rk}) \in W$ of (\ref{eq2_L})
 for $f (t) = f_{rk} (t) \equiv F_k (t; U_r)$ for $r = p, q$ and $k = 1, 2$.
 It readily follows from (\ref{eq2_dphi1})--(\ref{eq2_c}) that 
 $\| \bar U_{p1} - \bar U_{q1} \|_W \le C \b^{-1/2} \| U_p - U_q \|_W$.

 Next, we consider $f_{p2} - f_{q2}$.
 Let $\bar \phi_{r2j}$ be functions defined by (\ref{eq2_phi1}) and (\ref{eq2_phi2})
 for $f (t) = f_{r2} (t)$ on $I_j$ for $j = 1, 2$ and $r = p, q$.
 We first consider $t \le -3/2$.
 As seen in (\ref{eq2_F2_0}), we have
 \[
 F_2 (t; U_r) = \left( \int_0^1 V' (\bar x_r) d \theta \right)
 \left( \int_t^{t+1} \tilde u_r' (\tau) d \tau \right),
 \]
 where $\bar x_r \equiv \theta \tilde u_r (t + 1) + (1 - \theta) \tilde u_r (t)$.
 Moreover, there is $\bar x$ such that 
 \begin{equation}\label{eq2_d2V_meanvalue}
  V' (\bar x_p) - V' (\bar x_q) = V'' (\bar x) (\bar x_p - \bar x_q)
 \end{equation}
 by the mean value theorem.
 By the same inequality as in (\ref{eq2_F2_1}) and from the assumption (A$_1$), 
 there is $\rho > 0$ independent of $\b$ such that $\bar x \le l - \rho$,
 and then $|V'' (\bar x)| \le C \b^2 e^{-\a \b \rho}$.
 Then we obtain
 \begin{equation}\label{eq2_phip21phiq21}
  \bar \varphi_- (t) |\bar \phi_{p21}' (t) - \bar \phi_{q21}' (t)|
   \le C \b^{-1/2} \| U_p - U_q \|_W 
 \end{equation}
 in $t \le -3/2$.

 Next, we consider $- 3 / 2 \le t \le -1$.
 By the same argument as in (\ref{eq2_F2_0}), we see 
 \[
 f_{2p} (t) - f_{2q} (t)
 = \left( \int_0^1 V' (\bar x_1) d \theta \right)
 (\tilde u_p (t + 1) - \tilde u_q (t + 1))
 - \left( \int_0^1 V' (\bar x_2) d \theta \right)
 (\tilde u_p (t) - \tilde u_q (t)), 
 \]
 where 
 $\bar x_1 = \theta_1 \tilde u_p (t + 1) + (1 - \theta_1) \tilde u_q (t + 1)$
 and $\bar x_2 = \theta_2 \tilde u_p (t) + (1 - \theta_2) \tilde u_q (t)$.
 By the same argument as in (\ref{eq2_F2_1}), 
 there is $\rho > 0$ independent of $\b$ such that $\bar x_2 \le l - \rho$, 
 which implies $|V' (\bar x_2)| \le C \b e^{- \a \b \rho}$ in $- 3 / 2 \le t \le -1$.
 Moreover, there is $\bar \a > 0$ 
 independent of $\b$ such that $\bar x_1 \le l - \bar \a \b^{-3/4}$, which implies
 $|V' (\bar x_1)| \le C \b e^{- \bar \a \b^{1/4}}$
 in $- 3 / 2 \le t \le -1 - \b^{-3/4}$.
 On the other hand, $|V' (\bar x_1)| \le C \b$ and 
 \[
 | \tilde u_p (t + 1) - \tilde u_q (t + 1) |
 = \left| \int_{t+1}^0 ( \tilde u_p' (\tau) - \tilde u_q' (\tau) ) d \tau \right|
 \le C \b^{-3/4} \| U_p - U_q \|_W 
 \]
 in $-1 - \b^{-3/4} \le t \le -1$.
 Hence we obtain
 \[
 \begin{aligned}
  & \int_{-3/2}^{-1} \varphi_+ (s) |f_{2p} (s) - f_{2q} (s)| ds \\
  & = \int_{-3/2}^{-1 - \b^{-3/4}} \varphi_+ (s) |f_{2p} (s) - f_{2q} (s)| ds
  + \int_{-1 - \b^{-3/4}}^{-1} \varphi_+ (s) |f_{2p} (s) - f_{2q} (s)| ds \\
  & \le C \b^{-1/2} \| U_p - U_q \|_W   
 \end{aligned} 
 \]
 so that (\ref{eq2_phip21phiq21}) holds true in $-3/2 \le t \le -1$.

 By the same arguments as in $t \le - 1$ and the proof of the previous lemma, 
 we can estimate $f_{p2} (t) - f_{q2} (t)$ in $-1 \le t \le 0$ and $t \ge 0$.
 So we omit the details of the proofs.
 As a result, we have 
 $\| \bar U_{p2} - \bar U_{q2} \|_W \le C \b^{-1/2} \| U_p - U_q \|_W$.
 Also, it follows from (\ref{eq2_c}) and the estimates above that 
 $|\bar c_p - \bar c_q| \le C \b^{-1/2} \| U_p - U_q \|_W$.
 Therefore we complete the proof of the lemma.
\end{proof}


\section{Homoclinic solution}
\label{sec_homoclinic}

In this section, we will construct a homoclinic solution of (\ref{eq_u}) 
satisfying (\ref{eq_formalsol}).
The same argument as in this section enables us to prove the existence of 
a homoclinic solution of (\ref{eq_u}) satisfying (\ref{eq_formalsol2}).
Set $c_* = c_0 (\eta_*)$ and $c \to c_* + c$.
We consider (\ref{eq_u}) in $(-\infty, 0) \cup (0, t_*) \cup (t_*, \infty)$
($\equiv I_1 \cup I_2 \cup I_3$) under $u (0) = u (t_*) = l$.
We suppose that the solution satisfies the matching conditions at $t = 0, t_*$ as
\begin{equation}\label{eq4_C1_matching}
 \left\{
  \begin{aligned}
   & u' (+ 0) = u' (- 0), \\
   & u' (t_* + 0) = u' (t_* - 0).
  \end{aligned}
  \right.
\end{equation}

Define $u_2 = l + \eta_*$,
where $\eta_*$ is defined in Section~\ref{sec_Introduction}.
Let $u_0 (t; \eta)$ be a function defined by 
\[
\begin{aligned}
 u_0 (t; \eta) 
 & = u_i (t; \eta_*) (1 - \xi_R (t - R)) + u_2 \xi_R (t - R),
 & & - \infty < t \le t_* - 2 R, \\
 u_0 (t + t_*; \eta)
 & = u_d (t; \eta_* + \eta) \xi_R (t + 2 R) + u_2 (1 - \xi_R (t + 2 R)),
 & & - 2 R \le t < \infty, 
\end{aligned}
\]
where $R$ is assumed to be sufficiently large and independent of $\b$,
and will be determined later.
Here $\xi_R (t) = \xi (t / R)$, where 
$\xi (x) \in C^\infty (\r)$ is a cut-off function satisfying 
\[
\xi (x) =
\left\{
\begin{aligned}
 & 0, 
 & & x \le 0, \\
 & 1, 
 & & x \ge 1.
\end{aligned}
\right.
\]
It is clear that $u_0$ is $C^1$ and twice differentiable in $t \in \r$.
We note that $u_0 (0) = u_0 (t_*) = l$.
Throughout this section, we denote general positive constants 
independent of $\b, R$ and $t_*$ by $C$.

We next derive error terms between $u$ and $u_0$ in (\ref{eq_u})
and the lowest-order term of $c$ and $\eta$ close to $0$.
Let $\bar \varphi_\pm (t) = e^{\pm \k t / 2}$ for $\k \equiv a / c_*$, 
which were introduced in Section~\ref{sec_Heteroclinic}.
Substituting $u (t) = u_i (t; \eta_*) + \phi (t)$ in (\ref{eq_u}),
we formally obtain
\[
\phi'' (t) + \k \phi' (t) = - c
\left( \dfrac{2}{c_*} u_i'' (t; \eta_*) + \dfrac{a}{c_*^2} u_i' (t; \eta_*) \right),
\quad t \in (-\infty, 0) \cup (0, \infty).
\]
By solving this equation under (\ref{eq2_StrumLiouville_BC}), 
a unique solution is given by $\phi_{0i} (t) \equiv c \phi_0 (t; \eta_*)$.
Moreover, we denote
\[
\zeta_{0i} \equiv \phi_{0i}' (+0) - \phi_{0i}' (-0) = c \dfrac{a V_0}{c_*^3} e^{-\k}.
\]
Next, we derive error terms between $u$ and $u_0$ around $t = t_*$.
Replacing $t \to t + t_*$ and substituting 
$u (t + t_*) = u_d (t; \eta_* + \eta) + \phi (t)$ in (\ref{eq_u}), 
we formally obtain
\[
\phi'' (t) + \k \phi' (t)
= - ( c - c_\eta \eta )
\left( \dfrac{2}{c_*} u_d'' (t; \eta_*) + \dfrac{a}{c_*^2} u_d' (t; \eta_*) \right),
\quad t \in (-\infty, 0) \cup (0, \infty),
\]
where $c_\eta$ is defined by 
\[
c_\eta \equiv \dfrac{\pa c_0}{\pa \eta} (\eta_*) = - \dfrac{c_*^2}{V_0} e^\k.
\]
By solving the equation above under (\ref{eq2_StrumLiouville_BC}),
a unique solution can be given by 
$\phi_{0d} (t) \equiv - ( c - c_\eta \eta ) \phi_0 (t; \eta_*)$
because of $u_d (t) = 2 l - u_i (t)$.
Moreover, we denote
\[
\zeta_{0d} \equiv \phi_{0d}' (+0) - \phi_{0d}' (-0)
= - ( c - c_\eta \eta ) \dfrac{a V_0}{c_*^3} e^{-\k}.
\]
In order to emphasize the $(c, \eta)$-dependencies of 
$\phi_{0i}$, $\phi_{0d}$, $\zeta_{0i}$ and $\zeta_{0d}$, 
we may denote $\phi_{0i} (t; c)$, $\phi_{0d} (t; c, \eta)$, 
$\zeta_{0i} (c)$ and $\zeta_{0d} (c, \eta)$.

According to the arguments in Section~\ref{sec_Heteroclinic},
$\phi_{0i} (t)$ is dominant in the error between $u (t)$ and $u_i (t)$ near $t = 0$,
while $\phi_{0d} (t - t_*)$ is so between $u (t)$ and $u_d (t - t_*)$ near $t = t_*$.
In order to evaluate the error away from $t = 0$ and $t = t_*$, 
we introduce a unique solution $\psi$ in a linear problem 
\begin{equation}\label{eq4_outersol}
 \psi'' (t) + \dfrac{a}{c_* + c} \psi' (t) = f_{o1} (t) + f_{o2} (t) 
  \quad (\equiv f_o (t)),
  \quad t \in I_2
\end{equation}
under $\psi (0) = \psi (t_*) = 0$, 
where $f_{oj} (t)$ for $j = 1, 2$ are given by 
\[
\begin{aligned}
 f_{o1} (t) & \equiv - u_0'' (t) - \dfrac{a}{c_* + c} u_0' (t) \\
 & + \dfrac{c_*^2}{(c_* + c)^2} 
 \left( u_i'' (t; \eta_*) + \dfrac{a}{c_*} u_i' (t; \eta_*) 
 \right) (1 - \xi_R (t - R)) \\
 & + \dfrac{c_*^2}{(c_* + c)^2} 
 c \left( \dfrac{2}{c_*} u_i'' (t; \eta_*) 
 + \dfrac{a}{c_*^2} u_i' (t; \eta_*) \right) (1 - \xi_R (t - R)) \\
 & + \dfrac{c_d^2}{(c_* + c)^2} 
 \left( u_d'' (t - t_*; \eta_* + \eta) 
 + \dfrac{a}{c_d} u_d' (t - t_*; \eta_* + \eta) \right) \xi_R (t - t_* + 2 R) \\
 & + \dfrac{c_*^2}{(c_* + c)^2} ( c - c_\eta \eta ) 
 \left( \dfrac{2}{c_*} u_d'' (t - t_*; \eta_*) 
 + \dfrac{a}{c_*^2} u_d' (t - t_*; \eta_*) \right) \xi_R (t - t_* + 2 R)
\end{aligned}
\]
for $c_d \equiv c_0 (\eta_* + \eta)$ and 
\[
\begin{aligned}
 f_{o2} (t) & \equiv 
 \phi_{0i} (t) \xi_R'' (t - R) - \phi_{0d} (t - t_*) \xi_R'' (t - t_* + 2 R) \\
 & + 2 (\phi_{0i}' (t) \xi_R' (t - R) - \phi_{0d}' (t - t_*) \xi_R' (t - t_* + 2 R)) \\
 & + \dfrac{a}{c_* + c} 
 (\phi_{0i} (t) \xi_R' (t - R) - \phi_{0d} (t - t_*) \xi_R' (t - t_* + 2 R)).
\end{aligned}
\]
Let $\tilde \varphi (t) \in C (\r)$ be defined by 
\[
\tilde \varphi (t) = 
\left\{
\begin{aligned}
 & \bar \varphi_- (t), 
 & & t \le 0, \\
 & \min \{ \overline \varphi_+ (t), \overline \varphi_- (t - t_*), t_* \}, 
 & & 0 \le t \le t_*, \\
 & \bar \varphi_+ (t - t_*), 
 & & t_* \le t.
\end{aligned}
\right.
\]
Let $t_0 = (2 \log t_*) / \k$.
Then it holds that 
$\overline \varphi_+ (t_0) = \overline \varphi_- (t_* - t_0) = t_*$.
In order to emphasize the $(c, \eta)$-dependencies of $\psi$ and $f_o$,
we may denote $\psi (t; c, \eta)$ and $f_o (t; c, \eta)$.
We can prove the following lemma in the same way as Lemma~\ref{lemma_psi}
so that we omit the details of the proof.


\begin{lemma}\label{lemma4_psi}
 Let $\e > 0$ be small.
 Suppose that $R$ is sufficiently large depending on $\e$.
 If $t_*$ is sufficiently large depending on $\e, R$, then there is a constant
 $C > 0$ independent of $\e, R$ and $t_*$ such that the solution 
 $\psi \in C^1 (\overline{I_2}) \cap C^2 ([0, t_*-1]) \cap C^2 ([t_*-1, t_*])$ 
 of (\ref{eq4_outersol}) under $\psi (0) = \psi (t_*) = 0$ can be estimated as
 \begin{equation}\label{eq4_psi_estimate}
  |\psi (t)| \le C \e,
   \quad \tilde \varphi (t) |\psi' (t)| \le C \sigma (t) \e
 \end{equation}
 in $t \in I_2$ and $|c| + |\eta| \le \e$, where $\sigma (t)$ is given by 
 \[
 \sigma (t) = 
 \left\{
 \begin{aligned}
  & 1,
  & & t \in [0, t_* - 2 R), \\
  & \tilde \varphi (t),
  & & t \in [t_* - 2 R, t_*].
 \end{aligned}
 \right.
 \]
 In addition, $\psi$ is Lipschitz continuous with respect to $(c, \eta)$, that is,
 \begin{equation}\label{eq4_psi_Lipschitz}
  \begin{aligned}
   |\psi (t; c_p, \eta_p) - \psi (t; c_q, \eta_q)|
   & \le C (|c_p - c_q| + |\eta_p - \eta_q|), \\
   \tilde \varphi (t) |\psi' (t; c_p, \eta_p) - \psi' (t; c_q, \eta_q)|
   & \le C \sigma (t) (|c_p - c_q| + |\eta_p - \eta_q|)
  \end{aligned}
 \end{equation}
 in $t \in I_2$, and $|c_r| + |\eta_r| \le \e$ for $r = p, q$.
\end{lemma}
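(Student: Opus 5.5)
We solve (\ref{eq4_outersol}) explicitly by variation of constants on $I_2=(0,t_*)$, writing $\k_c = a/(c_*+c)$ for the damping coefficient. The two-point solution with $\psi(0)=\psi(t_*)=0$ is
\[
\psi(t) = \int_0^{t_*} G(t,s) f_o(s)\, ds ,
\]
where $G$ is the Green's function built from the solutions $1$ and $e^{-\k_c t}$ of the homogeneous equation. Differentiating,
\[
\psi'(t) = e^{-\k_c t}\Bigl(\psi'(0) + \int_0^t e^{\k_c s} f_o(s)\, ds\Bigr),
\]
and $\psi'(0)$ is fixed by the condition $\psi(t_*)=0$. The estimates (\ref{eq4_psi_estimate}) therefore reduce to pointwise bounds on $f_o$, with the correct spatial localisation. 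The regularity class $C^1(\overline{I_2})\cap C^2([0,t_*-1])\cap C^2([t_*-1,t_*])$ comes directly from the integral formula. The only point where $f_o$ can jump is the corner of $u_d(\cdot-t_*)$ at $t_*-1$, inherited from (\ref{eq2_ui_explicit}).

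The main work is bounding $f_o$, which I would split into three regions.

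\textbf{Region $[0,R]$.} Here $\xi_R(t-R)=0$ and $u_0=u_i(\cdot;\eta_*)$. Using $c_*^2u_i''+ac_*u_i'=0$ for $t>0$, the terms of $f_{o1}$ cancel up to an error of order $|c|(|u_i''|+|u_i'|)$. Both derivatives decay like $e^{-\k t}$, so this error is $O(\e e^{-\k t})$.

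\textbf{Region near $t_*$.} Symmetrically, on $[t_*-R,t_*]$ the same cancellation occurs, now with $c_d$ and $u_d(\cdot;\eta_*+\eta)$. What remains is $O(|c|+|\eta|)$ times derivatives of $u_d$, plus the mismatch between $u_d(\cdot;\eta_*+\eta)$ and $u_d(\cdot;\eta_*)$ in the $\phi_{0d}$ term. The latter is controlled by $|\eta|$ through smooth dependence of $c_0$ and $u_d$ on $\eta$. For $t<t_*$, with $s=t-t_*<0$, the relevant functions behave like $e^{-\k(t_*-t)}$ only on $[-1,0]$ and are constant for $s<-1$. Hence these contributions are compactly supported in $[t_*-1,t_*]$, apart from the cutoff region.

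\textbf{Cutoff transition regions.} The transitions $[R,2R]$ and $[t_*-2R,t_*-R]$ carry derivatives of $\xi_R$, of size $R^{-1}$ and $R^{-2}$. These multiply $u_i-u_2$, $u_d-u_2$ and $\phi_{0i},\phi_{0d}$. On $[R,2R]$, both $u_i-u_2$ and $\phi_{0i}$ are $O(e^{-\k R}(1+R))$. On the $t_*$ side, $u_d(\cdot;\eta_*+\eta)(-\infty)-u_2=O(|\eta|)$ by (\ref{eq_necessry_cond}), and $\phi_{0d}$ is constant there and of size $O(\e)$. These terms are therefore $O(\e R^{-1})$ on an interval of length $R$.

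\textbf{Middle region.} On $[2R,t_*-2R]$ we have $u_0\equiv u_2$, so $f_o\equiv 0$.

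Inserting these bounds into the Green's representation gives $|\psi|\le C\e$. The exponentially decaying sources contribute $O(\e)$. The cutoff source near $t_*$ has size $R\cdot\e R^{-1}\cdot \k^{-1}$; this is where $R$ large, depending on $\e$, absorbs the $e^{-\k R}$ pieces.

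For $\psi'$, the formula shows the following:
\begin{itemize}
\item Near $0$, $\psi'$ decays like $e^{-\k t}$ times $O(\e)$, up to a constant $\psi'(0)$ contribution that is propagated with the factor $e^{-\k t}$.
\item In the middle, $\psi'$ is exponentially small from the left, and of order $O(\e/t_*)$ from the global correction needed to enforce $\psi(t_*)=0$.
\item Near $t_*$ it is $O(\e)$.
\end{itemize}
Multiplying by $\tilde\varphi$ then yields (\ref{eq4_psi_estimate}). The cap $\tilde\varphi\le t_*$ is exactly what makes the $O(\e/t_*)$ piece compatible with $\tilde\varphi|\psi'|\le C\e$. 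The extra factor $\sigma=\tilde\varphi$ on $[t_*-2R,t_*]$ is available because there $\tilde\varphi=\bar\varphi_-(t-t_*)\le e^{\k R}$ is bounded in terms of $R$ only. For this reason I would choose $t_*$ large after $R$, and I would double check the constants at this step.

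The Lipschitz estimate (\ref{eq4_psi_Lipschitz}) follows the same route. The map $(c,\eta)\mapsto f_o$ is $C^1$, since $c_d$, $c_\eta$, $u_d(\cdot;\eta_*+\eta)$ and $\k_c$ depend smoothly on the parameters. Its derivative satisfies the same localised bounds, with $\e$ replaced by $1$. Differentiating the Green's function in $\k_c$ produces factors $t e^{-\k_c t}$, which are absorbed by the weight $\tilde\varphi\le e^{\k t/2}$.

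I expect the main obstacle to be the bookkeeping on $[t_*-2R,t_*]$: there $\tilde\varphi$ grows like $e^{\k(t_*-t)/2}$ while $\psi'$ must decay correspondingly. This requires verifying carefully that the $\eta$-mismatch terms in $f_{o1}$ vanish identically for $t<t_*-1$, except on the cutoff interval.
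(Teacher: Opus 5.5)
Your proposal is correct and is essentially the paper's own proof. The paper also writes $\psi$ by variation of constants, as $A_1(1-e^{-\k_c t})$ plus Duhamel integrals with $A_1$ fixed by $\psi(t_*)=0$. It then bounds $f_o$ region by region:
\begin{itemize}
\item $f_o\equiv0$ on $(2R,t_*-2R)\cup(t_*-R,t_*-1)$;
\item on $(0,2R)$ the source decays exponentially, and the $O(e^{-\k R})$ cutoff terms are absorbed by taking $R$ large depending on $\e$;
\item on $[t_*-2R,t_*-R]$ there is an $O(\e/R)$ cutoff source whose primitive is explicitly $O(\e)$.
\end{itemize}
The only difference is that the paper records sharper $O(\e^2)$ source bounds near $0$ and $t_*$. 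Those are needed later for $\nu_1,\nu_2$, but not for this lemma.

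Two of your side remarks are wrong, though harmless to the argument.
\begin{itemize}
\item On $(2R,t_*-2R)$ your own formula gives exactly $\psi'=Ke^{-\k_c t}$, because the homogeneous solutions are $1$ and $e^{-\k_c t}$. So there is no $O(\e/t_*)$ ``global correction'' piece, and the cap $t_*$ in $\tilde\varphi$ plays no role for $\psi$.
\item Since $\sigma=\tilde\varphi$ on $[t_*-2R,t_*]$, the required estimate there is just $|\psi'|\le C\e$. So $\psi'$ need not decay like $e^{-\k(t_*-t)/2}$. In general it does not: on the cutoff interval it contains the term $-B\,\xi_R'(t-t_*+2R)$ with $B=u_d(-1;\eta_*+\eta)-u_2+\phi_{0d}(-1)$. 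Hence the ``main obstacle'' you anticipate does not arise.
\end{itemize}
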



\begin{proof}
 Let $\hat \varphi (t) = e^{- \hat \k t}$ for $\hat \k = a / (c_* + c)$.
 In the same way as in Section~\ref{sec_Heteroclinic}, 
 $\psi$ can be written explicitly as 
 \begin{equation}\label{eq4_psi1}
  \psi (t) = A_1 (1 - \hat \varphi (t))
   - \int_0^t \dfrac{1}{\hat \k} \hat \varphi (t - s) f_o (s) ds
   + \int_0^t \dfrac{1}{\hat \k} f_o (s) ds,
 \end{equation}
 where the constant $A_1$ is determined 
 by $\psi (t_*) = 0$ because $\hat \varphi (t_*)$ is close to $0$ 
 for sufficiently large $t_*$ and $\hat \k$ is approximated by $\k$ 
 for sufficiently small $\e$.
 From (\ref{eq4_psi1}), $\psi (0) = 0$.

 We see $f_{o1} (t) = 0$ in $t \in (2 R, t_* - 2 R) \cup (t_* - R, t_* -1)$.
 In $(0, R)$, we see that $f_{o1} (t) = - c^2 u_i'' (t; \eta_*) / (c_* + c)^2$, 
 from which $|f_{o1} (t)| \le C \e^2 e^{- 3 \k t / 4}$
 by (\ref{eq2_ui_explicit}).
 Similarly, $|f_{o1} (t)| \le C \e^2 e^{- 3 \k t / 4}$ in $(R, 2 R)$ because
 $|\xi_R' (t)| \le C / R$ and $|\xi_R'' (t)| \le C / R^2$
 if $R$ is sufficiently large depending on $\e$.
 In the same way as in the case of $(0, R)$,
 we can estimate $|f_{o1} (t)| \le C \e^2$ in $(t_* - 1, t_*)$.
 On the other hand, we study $f_{o1} (t)$ in $(t_* - 2 R, t_* - R)$ 
 more carefully than in $(0, 2 R) \cup (t_* - 1, t_*)$.
 Since $u_d (t - t_*) = u_d (-1)$ in $t \in (t_* - 2 R, t_* - R)$, we have 
 \begin{equation}\label{eq4_fo1_int}
  \int_{t_* - 2 R}^t f_{o1} (s) ds 
   = - (u_d (-1; \eta_* + \eta) - u_2) 
   \left( \xi_R' (t - t_* + 2R) + \dfrac{a}{c_* + c} \xi_R (t - t_* + 2R) \right).
 \end{equation}
 Then we obtain
 \[
 \left| \int_0^t f_{o1} (s) ds \right| \le C \e
 \]
 in $t \in I_2$.
 It is easy to estimate the second term in (\ref{eq4_psi1}) as
 \[
 \left| \int_0^t \hat \varphi (t - s) f_{o1} (s) ds \right|
 \le C \e^2 \int_0^t e^{- \k (t-s)/2} e^{- 3 \k s/4} ds 
 \le C \e^2 e^{- \k t /2} 
 \]
 in $0 \le t \le t_* - 2 R$, 
 \[
 \left| \int_0^t \hat \varphi (t - s) f_{o1} (s) ds \right|
 \le C \e^2 e^{- \k t /2} 
 + \dfrac{C \e}{R} \int_{t_* - 2 R}^t e^{- \k (t-s)/2} ds
 \le \dfrac{C \e}{R}
 \]
 in $t_* - 2 R \le t \le t_* - 1$, and 
 \[
 \left| \int_0^t \hat \varphi (t - s) f_{o1} (s) ds \right|
 \le \dfrac{C \e}{R} + C \e^2 \int_{t_* - 1}^t e^{- \k (t-s)/2} ds
 \le C \e^2
 \]
 in $t_* - 1 \le t \le t_*$.

 Next, we estimate $f_{o2} (t)$ in $I_2$.
 We see $f_{o2} (t) = 0$ in $t \in (0, R) \cup (2 R, t_* - 2 R) \cup (t_* - R, t_*)$.
 In $(R, 2 R)$, we easily estimate $|f_{o2} (t)| \le C \e e^{- 3 \k t / 4} / R$
 by (\ref{eq2_phi0}).
 On the other hand, 
 $\phi_{0d} (t - t_*) = \phi_{0d} (-1)$ in $t \in (t_* - 2 R, t_* - R)$ so that 
 \begin{equation}\label{eq4_fo2_int}
  \int_{t_* - 2 R}^t f_{o2} (s) ds 
   = - \phi_{0d} (-1) 
   \left( \xi_R' (t - t_* + 2R) + \dfrac{a}{c_* + c} \xi_R (t - t_* + 2R) \right).
 \end{equation}
 Hence we obtain
 \[
 \left| \int_0^t f_{o2} (s) ds \right| \le C \e
 \]
 in $t \in I_2$, while 
 \[
 \left| \int_0^t \hat \varphi (t - s) f_{o2} (s) ds \right| \le 
 \left\{
 \begin{aligned}
  & C \dfrac{\e}{R} e^{- \k t /2},
  & & 0 \le t \le t_* - 2 R, \\
  & \dfrac{C \e}{R},
  & & t_* - 2 R \le t \le t_*.
 \end{aligned}
 \right.
 \]
 Therefore we have the inequalities of (\ref{eq4_psi_estimate}).

 It is easy to verify (\ref{eq4_psi_Lipschitz}) in a similar way as above
 by replacing $\psi$ and $f_o (t)$ in (\ref{eq4_psi1}) into 
 $\psi_r$ and $f_{or} (t) \equiv f_{o1r} (t) + f_{o2r} (t)$, respectively,
 where $f_{ojr} (t) = f_{oj} (t; c_r, \eta_r)$ for $r = p, q$.
 So we omit the details of the proof.
\end{proof}


We extend the domain of $\psi$ to $\r$ continuously
by setting $\psi (t) \equiv 0$ in $I_1 \cup I_3$.
We define $\nu_1$ and $\nu_2$ by 
$\nu_1 = \psi' (+ 0)$ and $\nu_2 = - \psi' (t_* - 0)$.
In order to emphasize the $(c, \eta)$-dependencies of $\nu_1$ and $\nu_2$,
we may denote $\nu_1 (c, \eta)$ and $\nu_2 (c, \eta)$.
Now we calculate the leading order terms of $\nu_1$ and $\nu_2$.
Expanding $u_d (-1; \eta_* + \eta) - u_2$ as Taylor series with respect to $\eta$, 
we have $u_d (-1; \eta_* + \eta) - u_2 \approx (e^\k - 1) \eta$
by (\ref{eq2_cs}) and (\ref{eq_necessry_cond}), and 
$\phi_{0d} (-1) = - ( c - c_\eta \eta ) a \eta_* / c_*^2$.
Then we see
\[
\left| A_1 + \dfrac{a \eta_*}{c_*^2} c \right| \le C \e^2,
\quad \left| \nu_1 + \dfrac{a^2 \eta_*}{c_*^3} c \right| \le C \e^2,
\quad | \nu_2 | \le C \e^2
\]
in $|c| + |\eta| \le \e$.
Note that $a \eta_* / V_0 = 1 - e^{- \k}$, and then 
$e^\k - 1 + a \eta_* c_\eta / c_*^2 = 0$.
Moreover, it holds that 
\[
\begin{aligned}
 \left| \nu_1 (c_p, \eta_p) + \dfrac{a^2 \eta_*}{c_*^3} c_p 
 - \left( \nu_1 (c_q, \eta_q) + \dfrac{a^2 \eta_*}{c_*^3} c_q \right) \right|
 & \le C \e (|c_p - c_q| + |\eta_p - \eta_q|), \\
 | \nu_2 (c_p, \eta_p) - \nu_2 (c_q, \eta_q) |
 & \le C \e (|c_p - c_q| + |\eta_p - \eta_q|)
\end{aligned}
\]
in $|c_r| + |\eta_r| \le \e$ for $r = p, q$.

Let $W_\infty$ be defined by 
\[
W_\infty \equiv \{ U = (\phi, c, \eta) \in C (\r) \times \r^2 \mid 
\phi \in C^1 (\overline{I_1}) \cap C^1 (\overline{I_2}) \cap C^1 (\overline{I_3}),
\quad \phi (0) = \phi (t_*) = 0 \},
\]
equipped with the norm $\| \cdot \|_{W_\infty}$ defined by 
\[
\| U \|_{W_\infty} \equiv \| \phi \| + |c| + |\eta|, 
\quad \| \phi \| \equiv \| \phi' \tilde \varphi \|_{I_1}
+ \| \phi' \tilde \varphi \|_{I_2}
+ \| \phi \tilde \varphi \|_{I_3} + \| \phi' \tilde \varphi \|_{I_3}.
\]
Clearly, $W_\infty$ is a Banach space.
Actually, we prove $\| \phi \|_\r \le C \| \phi \|$
by the same argument as in (\ref{eq2_phi_sup}).
Let $\e > 0$ be a small parameter independent of $\b, R, t_*$.
We define a closed ball in $W_\infty$ with the radius $\e$ by 
$W_\infty^\e \equiv \{ U \in W_\infty \mid \| U \|_{W_\infty} \le \e \}$.
For $U \in W_\infty$, we give $\tilde u (t) = \tilde u (t; U)$ by 
\[
\begin{aligned}
 \tilde u (t) = u_0 (t) + \phi_{0i} (t) (1 - \xi_R (t - R))
 + \phi_{0d} (t - t_*) \xi_R (t - t_* + 2 R) + \psi (t) + \phi (t).
\end{aligned}
\]
In order to prove Theorem~\ref{thm_4}, it is sufficient to find $U \in W_\infty$ 
such that $\tilde u$ satisfies (\ref{eq_u}) and (\ref{eq4_C1_matching})
for sufficiently small $\e$.
From here until the end of this section, we assume that these parameters are chosen 
in the order $\e, R$ and $(\b, t_*)$.
More precisely, the parameter $\e$ is taken to be sufficiently small, 
and subsequently, for such a fixed $\e$, 
the parameter $R$ is chosen to be sufficiently large.
Finally $\b$ and $t_*$ are assumed to be sufficiently large depending on $\e$ and $R$.
Hereafter, we denote general positive constants 
independent of $\b, \e, R, t_*$ and $U$ by $C$.

In the same way as in Section~\ref{sec_Heteroclinic},
we substitute $u (t) = \tilde u (t)$ into (\ref{eq_u})
and derive a linear equation, given by 
\begin{equation}\label{eq4_phi_modify}
 \bar \phi'' (t) + \k \bar \phi' (t) = F (t; U),
  \quad t \in I_1 \cup I_2 \cup I_3.
\end{equation}
Here the function $F$ in the right-hand side is decomposed into two parts as
\[
F (t; U) \equiv F_1 (t; U) + \dfrac{a}{(c_* + c)^2} F_2 (t; U),
\]
where
\[
\begin{aligned}
 F_1 (t; U) 
 & \equiv - \left( \dfrac{a}{c_* + c} - \dfrac{a}{c_*} \right) \phi' (t) \\
 & - \left( 1 - \dfrac{c_*^2}{(c_* + c)^2} \right) 
 ( \phi_{0i}'' (t) (1 - \xi_R (t - R)) 
 + \phi_{0d}'' (t - t_*) \xi_R (t - t_* + 2 R) ) \\
 &  - \dfrac{a c}{(c_* + c)^2}
 (\phi_{0i}' (t) (1 - \xi_R (t - R)) 
 + \phi_{0d}' (t - t_*) \xi_R (t - t_* + 2 R) )
\end{aligned}
\]
and
\[
\begin{aligned}
 & F_2 (t; U) \\
 & \equiv V (\tilde u (t + 1)) - V (\tilde u (t)) \\
 & - V_0 (H (u_i (t + 1; \eta_*) - l) - H (u_i (t; \eta_*) - l))) (1 - \xi_R (t - R)) \\
 & - V_0 (H (u_d (t - t_* + 1; \eta_* + \eta) - l) 
 - H (u_d (t - t_*; \eta_* + \eta) - l))) \xi_R (t - t_* + 2 R).
\end{aligned}
\]
Similarly, we substitute $\tilde u$ into (\ref{eq4_C1_matching}), obtaining
\begin{equation}\label{eq4_C1_matching_phi}
 \left\{
 \begin{aligned}
  & M_{11} \bar c + M_{12} \bar \eta 
  + \bar \phi' (+0) - \bar \phi' (-0) = G_1 (U), \\
  & M_{21} \bar c + M_{22} \bar \eta 
  + \bar \phi' (t_* + 0) - \bar \phi' (t_* - 0) = G_2 (U),
 \end{aligned}
 \right.
\end{equation}
where $M_{ij}$ and $G_i (U)$ for $i, j = 1, 2$ are given by 
\[
M_{11} \equiv \dfrac{a V_0}{c_*^3} ( 1 - \k ),
\quad M_{12} \equiv 0, 
\quad M_{21} \equiv - \dfrac{a V_0}{c_*^3} e^{-\k},
\quad M_{22} \equiv - \k,
\]
and
\[
G_1 (U) \equiv - \nu_1 - c \dfrac{a^2 \eta_*}{c_*^3},
\quad G_2 (U) \equiv - \nu_2.
\]
It is trivial that a matrix $M \equiv (M_{ij})$ for $i, j = 1, 2$ 
is invertible because its determinant is computed as 
\begin{equation}\label{eq4_detM}
 \det M = - \dfrac{a V_0}{c_*^3} ( 1 - \k ) \k \neq 0.
\end{equation}

We think of (\ref{eq4_phi_modify}) and (\ref{eq4_C1_matching_phi})
as a linear system with respect to $\bar U = (\bar \phi, \bar c, \bar \eta)$.
To begin with, we focus on the left-hand sides of 
(\ref{eq4_phi_modify}) and (\ref{eq4_C1_matching_phi}), 
and study a linear problem given by 
\begin{equation}\label{eq4_L}
 \left\{
  \begin{aligned}
   & \phi'' (t) + \k \phi' (t) = f, 
   \quad t \in I_1 \cup I_2 \cup I_3, \\
   & M_{11} c + M_{12} \eta + \phi' (+0) - \phi' (-0) = g_1, \\
   & M_{21} c + M_{22} \eta + \phi' (t_* + 0) - \phi' (t_* - 0) = g_2,
  \end{aligned}
 \right.
\end{equation}
where we omit the bars above the letters in the expressions for simplicity.
By the same arguments as in the previous section, 
we find a solution $U \in W_\infty$ for $h = (f, g_1, g_2) \in W_\infty'$, 
where $W_\infty'$ is defined by 
\[
W_\infty' \equiv 
(C ((-\infty, -1]) \cap C ([-1, 0]) \cap 
C ([0, t_*-1]) \cap C ([t_*-1, t_*]) \cap C (\overline{I_3})) \times \r^2
\]
with the norm $\| \cdot \|_{W_\infty'}$ given by 
$\| h \|_{W_\infty'} \equiv \| f \tilde \varphi \|_\r + |g_1| + |g_2|$.


\begin{lemma}\label{lemma4_linearsol}
 Let $h \in W_\infty'$.
 Then there exists a unique solution $U \in W_\infty$ of (\ref{eq4_L}) such that 
 $\| U \|_{W_\infty} \le C \| h \|_{W_\infty'}$ for some constant $C > 0$.
\end{lemma}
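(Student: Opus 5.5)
We construct the solution of (\ref{eq4_L}) explicitly, interval by interval, as in the proof of Lemma~\ref{lemma_linearsol}, and then fix $(c,\eta)$ through the invertibility of $M$ in (\ref{eq4_detM}).

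On $I_1$ we use formula (\ref{eq2_phi1}) with $\varphi_\pm(t) = e^{\pm\k t}$. It is the unique solution with $\phi(0)=0$ and $\phi'\bar\varphi_-$ bounded. By (\ref{eq2_dphi1}),
\[
\phi'(t) = \varphi_-(t)\int_{-\infty}^t \varphi_+(s) f(s)\,ds .
\]
Since $|f(s)| \le \|f\tilde\varphi\|_\r\, e^{\k s/2}$ on $I_1$, this gives $\|\phi'\tilde\varphi\|_{I_1} \le C\|f\tilde\varphi\|_\r$. On $I_3$, after the shift $t \to t - t_*$, formula (\ref{eq2_phi2}) gives the unique solution with $\phi(t_*) = 0$ and $\phi\bar\varphi_+(\cdot - t_*)$ bounded, together with the analogous weighted bounds on $\phi$ and $\phi'$.

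On $I_2$ we solve the two-point problem $\phi(0)=\phi(t_*)=0$ as in (\ref{eq4_psi1}):
\[
\phi(t) = A(1-\varphi_-(t)) - \int_0^t \tfrac{1}{\k}\varphi_-(t-s)f(s)\,ds + \int_0^t \tfrac{1}{\k} f(s)\,ds ,
\]
with $A$ fixed by $\phi(t_*)=0$. Because $1 - \varphi_-(t_*)$ is close to $1$ for large $t_*$, $A$ is uniquely determined.

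The one-sided derivatives $\phi'(\pm 0)$ and $\phi'(t_*\pm 0)$ are then linear functionals of $f$. The last two equations of (\ref{eq4_L}) become $M(c,\eta)^T = (g_1,g_2)^T - (\text{jumps})$. Since $\det M \neq 0$ and $M$ is independent of $t_*$, $\beta$ and $R$, this determines $(c,\eta)$ uniquely with $|c|+|\eta| \le C(|g_1|+|g_2|+\text{jumps})$. Uniqueness of the whole solution follows because for $h=0$ each piece vanishes (the homogeneous solutions are excluded by the boundary and decay conditions), so the jumps vanish and hence $(c,\eta)=0$.

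The main obstacle is the uniform estimate on $I_2$ with the weight $\tilde\varphi$, since the constant must not depend on $t_*$. The weight $\tilde\varphi$ grows like $e^{\k t/2}$ near $0$, like $e^{\k(t_*-t)/2}$ near $t_*$, and is capped at $t_*$ in the middle, so $|f(s)| \le \|f\tilde\varphi\|_\r/\tilde\varphi(s)$ with $1/\tilde\varphi$ integrable uniformly in $t_*$. I will show that $|\int_0^{t_*} f| \le C\|f\tilde\varphi\|_\r$, using $\int_0^{t_*} 1/\tilde\varphi \le C + t_0 e^{-\k t_0/2}\cdot C \le C$ (recall $t_0 = 2\log t_*/\k$). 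This gives $|A| \le C\|f\tilde\varphi\|_\r$.

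For the derivative we have
\[
\phi'(t) = \k A\varphi_-(t) - \int_0^t \varphi_-(t-s)f(s)\,ds + \tfrac{1}{\k}f(t).
\]
Hmm—this $f(t)/\k$ term only gives $\tilde\varphi|\phi'| \le C\|f\tilde\varphi\|$, which is exactly the required bound. For the convolution term, I will split $[0,t]$ at $t_0$ and $t_*-t_0$ and check that $\tilde\varphi(t)\int_0^t e^{-\k(t-s)}/\tilde\varphi(s)\,ds$ is bounded in each regime. The delicate case is near $t_*$: there $\tilde\varphi(t) = e^{\k(t_*-t)/2}$ while $1/\tilde\varphi(s)$ is as large as $e^{-\k(t_*-s)/2}$, and the exponential kernel compensates.

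The term $\k A\varphi_-(t)\tilde\varphi(t)$ is bounded by $|A|\,e^{-\k t/2}$ near $0$, and near $t_*$ by $|A|\,e^{-\k t_*}e^{\k t/2}$. This is small, but it needs the cap of $\tilde\varphi$ at $t_*$ in the middle region, where the bound $|A|\,t_* e^{-\k t} \le C|A|$ holds for $t \ge t_0$.

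Collecting the estimates on $I_1$, $I_2$, $I_3$ together with the bound on $(c,\eta)$ gives $\|U\|_{W_\infty} \le C\|h\|_{W_\infty'}$.
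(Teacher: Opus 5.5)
Your proposal is correct and is essentially the paper's proof. You use the explicit formulas (\ref{eq4_phi1})--(\ref{eq4_phi2}) on $I_1, I_2, I_3$, fix $A$ by $\phi(t_*)=0$, and recover $(c,\eta)$ from the jump conditions via $\det M \neq 0$; you also supply the uniform-in-$t_*$ weighted estimates that the paper only calls easy.

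There are two harmless slips. First, differentiating your $I_2$ formula actually gives $\phi'(t)=\k A\varphi_-(t)+\int_0^t\varphi_-(t-s)f(s)\,ds$, since the $f(t)/\k$ terms cancel. Second, the capped middle region contributes $(t_*-2t_0)/t_*\le 1$ to $\int_0^{t_*}\tilde\varphi^{-1}\,ds$, not $t_0e^{-\k t_0/2}$. Neither affects your bounds.
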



\begin{proof}
 We show the existence of a unique solution $U$.
 Let $\varphi_\pm (t) = e^{\pm \k t}$, which were introduced in the previous section.
 In the same manner as described in Lemma~\ref{lemma_linearsol}, 
 the solutions on $I_1$ and $I_3$, denoted by $\phi_1$ and $\phi_3$ respectively,
 are given by 
 \begin{equation}\label{eq4_phi1}
  \phi_1 (t) = \int_{-\infty}^0 \dfrac{1}{\k} \varphi_+ (s) f (s) d s
   - \varphi_- (t) \int_{-\infty}^t \dfrac{1}{\k} \varphi_+ (s) f (s) ds
   - \int_t^0 \dfrac{1}{\k} f (s) ds
 \end{equation}
 and 
 \begin{equation}\label{eq4_phi3}
  \phi_3 (t) = \varphi_- (t - t_*) \int_{t_*}^{\infty} \dfrac{1}{\k} f (s) ds
   - \varphi_- (t) \int_{t_*}^t \dfrac{1}{\k} \varphi_+ (s) f (s) ds
   - \int_t^{\infty} \dfrac{1}{\k} f (s) ds.
 \end{equation}
 On the other hand, $\phi_2$ is given by 
 \begin{equation}\label{eq4_phi2}
  \phi_2 (t) = A (1 - \varphi_- (t))
   - \varphi_- (t) \int_0^t \dfrac{1}{\k} \varphi_+ (s) f (s) ds
   + \int_0^t \dfrac{1}{\k} f (s) ds,
 \end{equation}
 where $A$ can be uniquely determined under the condition $\phi_2 (t_*) = 0$.
 We note that $\phi_1 (0) = \phi_2 (0) = 0$ and $\phi_3 (t_*) = 0$.
 Let $\phi (t) = \phi_j (t)$ on $I_j$.
 Since the second and third equations of (\ref{eq4_L}) determine $(c, \eta)$,
 $U$ is a solution of (\ref{eq4_L}).
 Based on the arguments as in the proofs 
 of Lemmas~\ref{lemma_linearsol} and \ref{lemma4_psi}, 
 it is easy to estimate $U$ by (\ref{eq4_detM}).
\end{proof}


We will prove that there exists $\bar U$ 
in (\ref{eq4_phi_modify}) and (\ref{eq4_C1_matching_phi})
such that $\bar U \in W_\infty^\e$.


\begin{lemma}\label{lemma4_estimate_barsol}
 If $U \in W_\infty^\e$, then there exists a unique solution $\bar U \in W_\infty^\e$ 
 in (\ref{eq4_phi_modify}) and (\ref{eq4_C1_matching_phi}).
\end{lemma}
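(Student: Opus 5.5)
Existence and uniqueness of $\bar U$ come directly from Lemma~\ref{lemma4_linearsol}. We set $h = (F(\cdot;U), G_1(U), G_2(U))$, and once we check $h \in W_\infty'$ the lemma gives a unique $\bar U\in W_\infty$ with $\|\bar U\|_{W_\infty}\le C\|h\|_{W_\infty'}$. Since $C$ is independent of $\b,\e,R,t_*$, it is enough to show $\|h\|_{W_\infty'}\le \e/C$. This is where the parameter ordering ($\e$ small, then $R$ large, then $\b,t_*$ large) is used: we aim for a bound of the form $C(\e^2 + \e/R + \delta(\b,t_*))$ with $\delta\to0$.

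\emph{The terms $G_1,G_2$.} These are handled by the leading-order computations after Lemma~\ref{lemma4_psi}: $|\nu_1 + a^2\eta_* c/c_*^3|\le C\e^2$ and $|\nu_2|\le C\e^2$ for $|c|+|\eta|\le\e$. Hence $|G_1(U)|+|G_2(U)|\le C\e^2$.

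\emph{The term $F_1$.} Every term of $F_1$ carries a factor that is $O(|c|)$, namely $a/(c_*+c)-a/c_*$, $1-c_*^2/(c_*+c)^2$ or $ac/(c_*+c)^2$. These factors multiply $\phi'$, or $\phi_{0i},\phi_{0d}$ and their derivatives, which are themselves $O(\e)$. The weighted norm $\|\phi'\tilde\varphi\|$ is controlled by $\|U\|_{W_\infty}\le\e$. For $\phi_{0i}$ and $\phi_{0d}$ we use the explicit formula (\ref{eq2_phi0}), whose derivatives decay like $te^{-\k t}$, which beats the weight $\tilde\varphi$ (at most $e^{\k t/2}$, capped by $t_*$). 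This gives $\|F_1\tilde\varphi\|_\r\le C\e^2$.

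\emph{The term $F_2$: the main obstacle.} We split $\r$ into regions as in Lemma~\ref{lemma_estimate_T}, now with two transition points $0$ and $t_*$.
\begin{itemize}
\item Near $t=0$, on $[-3/2,1]$, and near $t=t_*$, on $[t_*-3/2,t_*+1]$, we repeat the $\b^{-3/4}$-layer argument of Lemma~\ref{lemma_estimate_T}. The profile $\tilde u$ crosses $l$ transversally at $0$ and at $t_*$, because $u_i'(0),u_d'(0)\ne0$ and the perturbation $\psi+\phi+\phi_{0i}+\phi_{0d}$ has derivative $O(\e)$. So $V$ may be replaced by $V_0H+V_1$ up to errors $O(\b^{-3/4})$ in the weighted integrals.
\item Away from these windows, $\tilde u$ stays a fixed distance $\rho>0$ from $l$, provided $\e$ is small and $\eta$ is near $\eta_*$. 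Here (A$_1$) gives $|V'(\bar x)|\le C\b e^{-\a\b\rho}$, so $F_2$ is exponentially small in $\b$.
\item Some discrepancy remains in the step-function approximation itself. On the cut-off zones $(R,2R)$ and $(t_*-2R,t_*-R)$, $u_0$ interpolates between $u_i$, $u_2$ and $u_d$, while the $H$-terms vanish or are constant there because $u_i(t+1),u_i(t)>l$ for $t>0$ and $u_d(\cdot-t_*)$ is constant on $t<t_*-1$. So $F_2$ reduces to $V(\tilde u(t+1))-V(\tilde u(t))$, both arguments lying on the same side of $l$, and this is again exponentially small in $\b$.
\end{itemize}
The part of the residual that is genuinely $O(\e/R)$, produced by differentiating the cut-offs, was already absorbed into $\psi$ through $f_{o1}$ and $f_{o2}$. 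Lemma~\ref{lemma4_psi} controls $\psi$ by $C\e$ in sup norm and $\psi'$ by $C\sigma\e/\tilde\varphi$; these enter $F_2$ only through $\tilde u$ near the crossing points, where they perturb the crossing location by $O(\e)$ and leave the transversality argument intact.

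The delicate point is to keep every weighted sup of $F\tilde\varphi$ small *uniformly in $t_*$*. The weight reaches size $t_*$ in the middle of $I_2$, so we must check that all contributions there are exponentially small in $\b$ or multiplied by $e^{-\k t/2}$-decaying factors, and that the $\b$-threshold is chosen after $t_*$ in a way that does not break uniformity. If it does, we would use that $\b$ may be taken large after fixing $t_*$; the remark following Theorem~\ref{thm_3} indicates this ordering is allowed.

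Collecting the three estimates gives $\|\bar U\|_{W_\infty}\le C(\e^2+\e/R+\b e^{-\bar\a\b^{1/4}}t_*+\b^{-3/4})\le\e$ for $\e$ small, then $R$ large, then $\b$ large. Hence $\bar U\in W_\infty^\e$.
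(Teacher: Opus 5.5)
\textbf{There is a genuine gap, in your reduction step.} Your decomposition into the $F_1$, $F_2$ and $(G_1,G_2)$ pieces matches the paper. So do the bounds $\|F_1\tilde\varphi\|_\r\le C\e^2$ and $|G_1|+|G_2|\le C\e^2$, and your regional analysis of $F_2$. The problem is that you reduce everything to $\|h\|_{W_\infty'}\le \e/C$ via the a priori bound of Lemma~\ref{lemma4_linearsol}. But $\|h\|_{W_\infty'}$ contains the weighted \emph{sup} norm $\|F\tilde\varphi\|_\r$, and the $F_2$ part of it is never small:
\begin{itemize}
\item $V(\tilde u(t))$ and $V(\tilde u(t+1))$ are continuous in $t$.
\item The subtracted term $V_0(H(u_i(t+1;\eta_*)-l)-H(u_i(t;\eta_*)-l))$ jumps by $V_0$ at $t=0$, and also at $t=-1$. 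The $u_d$-term jumps similarly at $t_*-1$ and $t_*$.
\item The weight satisfies $\tilde\varphi(0)=1$.
\end{itemize}
Hence $\|F_2\tilde\varphi\|_\r\ge V_0/2$ for every $\b$ and every $U$. Lemma~\ref{lemma4_linearsol} as stated therefore only yields $\|\bar U\|_{W_\infty}\le C$, never $\le \e$. Your layer argument shows that $F_2$ is small only in an integrated sense: it is $O(1)$ on intervals of length $\b^{-3/4}$ and exponentially small elsewhere. Your phrase ``in the weighted integrals'' acknowledges this. However, nothing in your argument converts integral smallness of $f$ into smallness of the solution.

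\textbf{How to repair it.} Treat the $F_2$ piece separately, as the paper does. Let $\bar U_2$ solve (\ref{eq4_L}) with $h_2=(F_2(\cdot;U),0,0)$, and estimate it directly from the representation formulas (\ref{eq4_phi1})--(\ref{eq4_phi2}). In those formulas the following depend on $f$ only through integrals such as $\varphi_-(t)\int\varphi_+ f\,ds$ and $\int f\,ds$:
\begin{itemize}
\item $\bar\phi'$ on each $I_j$, for instance $\phi_2'(t)=\k A\varphi_-(t)+\varphi_-(t)\int_0^t\varphi_+(s)f(s)\,ds$, where the pointwise value $f(t)$ cancels on differentiation;
\item $\bar\phi$ on $I_3$;
\item the constant $A$ fixed by $\phi_2(t_*)=0$;
\item hence the jumps at $0$ and $t_*$, which determine $(\bar c_2,\bar\eta_2)$ through $M^{-1}$.
\end{itemize}
With this, your regional bounds give $\|\bar U_2\|_{W_\infty}\le C\b^{-3/4}$. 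Only $h_1=(F_1,0,0)$ and $h_3=(0,G_1,G_2)$ should go through the a priori bound.

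\textbf{Two minor points.} First, the factor $t_*$ in your final estimate is avoidable. In the middle of $I_2$, use $\tilde\varphi(t)\int_t^{t+1}|\tilde u'|\le C\sigma(t)\le Ce^{\k R}$ together with $\tilde\varphi(t)\le e^{\k(t-s)/2}\tilde\varphi(s)$ for $s\le t$. Then $\b$ need only be large depending on $R$, which is how the paper keeps $\b$ and $t_*$ independent. Second, the $\e/R$ term does not arise, since those cut-off residuals were placed in $\psi$.
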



\begin{proof}
 Set $h_1 = (F_1 (t; U), 0, 0)$, $h_2 = (F_2 (t; U), 0, 0)$ 
 and $h_3 = (0, G_1 (U), G_2 (U))$ for $U = (\phi, c, \eta) \in W_\infty^\e$.
 It is easily proved that $h_k \in W_\infty'$ for $k = 1, 2, 3$
 by applying (\ref{eq2_F2_0}) to $F_2 (t; U)$.
 According to Lemma~\ref{lemma4_linearsol}, we have a unique solution 
 $\bar U_k = (\bar \phi_k, \bar c_k, \bar \eta_k) \in W_\infty$ 
 of (\ref{eq4_L}) for $h = h_k$.
 By setting $\bar U = \bar U_1 + \bar U_2 + \bar U_3$, 
 $\bar U$ satisfies (\ref{eq4_phi_modify}) and (\ref{eq4_C1_matching_phi}).

 Let $f (t) = F_1 (t; U)$.
 It is easy to see that $\| f \tilde \varphi \|_\r \le C \e^2$.
 Hence Lemma~\ref{lemma4_linearsol} leads to $\| \bar U_1 \|_{W_\infty} \le C \e^2$.
 Next, we consider $\bar U_2$.
 We define $\phi_j$ by (\ref{eq4_phi1})--(\ref{eq4_phi2}) 
 for $f (t) = F_2 (t; U)$ and $j = 1, 2, 3$.
 By the same argument as in Lemma~\ref{lemma_estimate_T}, we easily obtain 
 $\| \bar \varphi_- \phi_1' \|_{I_1}  \le C \b^{-3/4}$ and 
 $\| \bar \varphi_+ \phi_3 \|_{I_3} 
 + \| \bar \varphi_+ \phi_3' \|_{I_3} \le C \b^{-3/4}$.
 We study the first integral term in (\ref{eq4_phi2}).
 In $0 \le t \le 1$, $f$ is rewritten into $f (t) = f_1 (t) - f_2 (t)$, 
 where $f_1 (t) = V (\tilde u (t + 1)) - V_0 - V_1$
 and $f_2 (t) = V (\tilde u (t)) - V_0 - V_1$.
 By the same arguments as in Lemma~\ref{lemma_estimate_T}, 
 there is $\rho > 0$ independent of $\b, \e, t_*, R$ and $U$ such that
 $|f_1 (t)| \le C e^{- \a \b \rho}$.
 In $0 \le t \le \b^{-3/4}$, $|f_2 (t)| \le C$.
 In $\b^{-3/4} \le t \le 1$, there exists $\b^{-3/4} < \bar t < 1$
 such that 
 \[
 \tilde u (t) - l = \tilde u' (\bar t) t
 \ge \dfrac12 \b^{-3/4} \min_{0 \le t \le 1} u_i' (t) 
 \]
 by the mean value theorem.
 Setting $\bar \a = \a \min_{0 \le t \le 1} u_i' (t) / 2 > 0$, 
 we have $|f_2 (t)| \le C e^{- \bar \a \b^{1/4}}$.
 Hence we can estimate
 \[
 \int_0^1 |f (s)| ds 
 = \int_0^{\b^{-3/4}} |f (s)| ds + \int_{\b^{-3/4}}^1 |f (s)| ds \le C \b^{-3/4}.
 \]

 Next, we consider $1 \le t \le t_* - 3/2$.
 We find
 \begin{equation}\label{eq4_F2_0}
  \begin{aligned}
   f (t) & = V (\tilde u (t + 1)) - V (\tilde u (t))
   = \left( \int_0^1 V' (\bar x) d \theta \right)
   \left( \int_t^{t + 1} \tilde u' (\tau) d \tau \right), \\
   \bar x & = \theta \tilde u (t + 1) + (1 - \theta) \tilde u (t).
  \end{aligned}  
 \end{equation}
 By the same argument as in (\ref{eq2_F2_3}), there exists $\rho > 0$ 
 independent of $\b, \e, R$ and $t_*$ such that $\bar x - l \ge \rho$.
 Then $|V' (\bar x)| \le C \b e^{- \a \b \rho}$.
 On the other hand, we estimate 
 \[
 \tilde \varphi (t) \int_t^{t + 1} |\tilde u' (\tau)| d \tau 
 \le C \sigma (t)
 \]
 by (\ref{eq2_ui_explicit}), (\ref{eq2_phi0}), 
 (\ref{eq4_psi_estimate}), and $U \in W_\infty^\e$,
 where $\sigma (t)$ is given in the statement of Lemma~\ref{lemma4_psi}.
 Then we can estimate 
 \[
 \begin{aligned}
  & \left| \varphi_- (t) \int_0^t \varphi_+ (s) f (s) ds \right| \\
  & \le C \b^{-3/4} e^{- \k t} + C \b e^{- \a \b \rho} 
  \int_1^t e^{- \k (t - s)} 
  \max \{ e^{- \k s / 2}, e^{- \k (t_* - s) / 2}, \dfrac{1}{t_*} \} ds \\
  & \le C \b^{-3/4} 
  \max \{ e^{- \k t / 2}, e^{- \k (t_* - t) / 2}, \dfrac{1}{t_*} \}
 \end{aligned}
 \]
 in $1 \le t \le t_* - 2 R - 1$.
 If $\b$ is sufficiently large depending on $R$, we see
 $\b e^{- \a \b \rho} e^{\k R} \le \b^{-3/4}$ so that 
 \[
 \begin{aligned}
  \left| \varphi_- (t) \int_0^t \varphi_+ (s) f (s) ds \right|
  & \le C \b^{-3/4} e^{- \k (t_* - t) / 2}
  + C \b e^{- \a \b \rho} \int_{t_* - 2 R - 1}^t e^{- \k (t - s)} ds \\
  & \le C \b^{-3/4} e^{- \k (t_* - t) / 2}
 \end{aligned}
 \]
 in $t_* - 2 R - 1 \le t \le t_* - 3/2$.

 Next, we consider $t_* - 3/2 \le t \le t_* - 1$.
 We rewrite $f$ into $f (t) = f_1 (t) - f_2 (t)$, 
 where $f_1 (t) = V (\tilde u (t + 1)) - V_0 - V_1$
 and $f_2 (t) = V (\tilde u (t)) - V_0 - V_1$.
 In $t_* - 3/2 \le t \le t_* - 1 - \b^{-3/4}$, 
 there exists $t_* - 1/2 < \bar t < t_*$ such that 
 \[
 \tilde u (t + 1) - l = - \tilde u' (\bar t) (t_* - (t + 1))
 \ge - \dfrac12 \b^{-3/4} \max_{-1/2 \le t \le 0} u_d' (t) > 0
 \]
 by the mean value theorem.
 Setting $\bar \a = - \a \max_{-1/2 \le t \le 0} u_d' (t) / 2$, we have
 $|f_1 (t)| \le C e^{- \bar \a \b^{1/4}}$.
 On the other hand, there is $\rho > 0$ independent of $\b$ such that
 $\tilde u (t) \ge l + \rho$ by the same argument as above, 
 which implies $|f_2 (t)| \le C e^{- \a \b \rho}$.
 In $t_* - 1 - \b^{-3/4} \le t \le t_* - 1$, 
 it is clear that $|f (t)| \le C$.
 Hence we can estimate
 \[
 \int_{t_* - 3/2}^{t_* - 1} |f (s)| ds 
 = \int_{t_* - 3/2}^{t_* - 1 - \b^{-3/4}} |f (s)| ds 
 + \int_{t_* - 1 - \b^{-3/4}}^{t_* - 1} |f (s)| ds 
 \le C \b^{-3/4}.
 \]
 Hence we see
 \[
 \left| \varphi_- (t) \int_0^t \varphi_+ (s) f (s) ds \right|
 \le C \b^{-3/4}
 \]
 in $t_* - 3/2 \le t \le t_* - 1$ by the same argument as above.
 The inequality above holds true in $t_* - 1 \le t \le t_*$
 by rewriting $f$ into $f (t) = f_1 (t) - f_2 (t)$
 and using the same argument as in $t_* - 3/2 \le t \le t_* - 1$ 
 and $0 \le t \le 1$, 
 where $f_1 (t) = V (\tilde u (t + 1)) - V_1$
 and $f_2 (t) = V (\tilde u (t)) - V_0 - V_1$.

 Since the integral terms in (\ref{eq4_phi2}) can be estimated
 in similar ways to the arguments above, we obtain $|A| \le C \b^{-3/4}$,
 $|\bar c_2| + |\bar \eta_2| \le C \b^{-3/4}$, and 
 $\| \bar U_2 \|_{W_\infty} = \| \bar \phi_2 \| \le C \b^{-3/4}$
 because $\bar \phi_2 (t) = \phi_j (t)$ on $I_j$ for $j = 1, 2, 3$.

 We estimate $\bar U_3$.
 Since $f$ is zero in (\ref{eq4_phi1})--(\ref{eq4_phi2}), $\bar \phi_3 \equiv 0$.
 Since $|G_j (U)| \le C \e^2$ for $j = 1, 2$, 
 we have $\| \bar U_3 \|_{W_\infty} \le C \e^2$
 by Lemma~\ref{lemma4_linearsol}.
 If $\e$ is sufficiently small and $\b$ is sufficiently large, we have 
 $ \| \bar U \|_{W_\infty} \le C \b^{-3/4} + C \e^2 \le \e$.
\end{proof}


By Lemma~\ref{lemma4_estimate_barsol}, we can define a map $T$
from $W_\infty^\e$ to itself by $\bar U = T (U)$.
Next, we prove that if $\b$ is sufficiently small, then $T$ is a contraction.
This implies that $T$ has a fixed point $U \in W_\infty^\e$, 
and hence Theorem~\ref{thm_4} follows.


\begin{lemma}\label{lemma4_Lipschitz_T_homoclinic}
 There exists $C > 0$ independent of $\b, \e, R$ and $t_*$ such that 
 \[
 \| T (U_p) - T (U_q) \|_{W_\infty} \le C \e \| U_p - U_q \|_{W_\infty}
 \]
 in $U_p, U_q \in W_\infty^\e$.
 In particular, $T$ is a contraction mapping and has a fixed point $U \in W_\infty^\e$.
\end{lemma}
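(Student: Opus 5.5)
Since $T$ is defined through the linear solution operator of Lemma~\ref{lemma4_linearsol}, which is linear and bounded with a constant independent of $\b,\e,R,t_*$, it suffices to bound the data. For $U_r=(\phi_r,c_r,\eta_r)\in W_\infty^\e$, $r=p,q$, let $h_k^r$ ($k=1,2,3$) be the data used in the proof of Lemma~\ref{lemma4_estimate_barsol}. Then $T(U_p)-T(U_q)$ solves (\ref{eq4_L}) with right-hand side $\sum_k (h_k^p-h_k^q)$, and it is enough to prove
\[
\| h_k^p - h_k^q \|_{W_\infty'} \le C \e \| U_p - U_q \|_{W_\infty}, \quad k = 1, 2, 3.
\]
Once this holds, taking $\e$ small so that $C\e<1$ makes $T$ a contraction on the closed ball $W_\infty^\e$, which is self-mapped by Lemma~\ref{lemma4_estimate_barsol}. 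The contraction mapping theorem then gives the fixed point.

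\emph{The $h_3$ part.} Here I would use the Lipschitz bounds on $\nu_1+a^2\eta_* c/c_*^3$ and on $\nu_2$ stated after Lemma~\ref{lemma4_psi}. These give $|G_j(U_p)-G_j(U_q)|\le C\e(|c_p-c_q|+|\eta_p-\eta_q|)$ directly.

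\emph{The $h_1$ part.} $F_1$ is built from products in which every term carries at least one small factor. Examples are $(a/(c_*+c)-a/c_*)\phi'$, and $1-c_*^2/(c_*+c)^2$ multiplied by $\phi_{0i}''$ or $\phi_{0d}''$, which are themselves linear in $c$ or in $c-c_\eta\eta$. So the differences are bilinear: one factor is $O(\e)$ and the other is $O(\|U_p-U_q\|)$. The weight $\tilde\varphi$ is absorbed by the exponential decay of $\phi_0$ and by the definition of $\|\phi\|$.

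\emph{The $h_2$ part (main obstacle).} This follows the proof of Lemma~\ref{lemma_Lipschitz_T}, with $\b$ large relative to $\e$ and $R$, so that every $\b$-dependent error is at most $\e$. The difference of $F_2$ has two sources.
\begin{itemize}
\item[(i)] The terms $V(\tilde u_r)$. Write $\tilde u_p-\tilde u_q=\delta$, which collects $\delta\phi$, the differences of $\phi_{0i},\phi_{0d}$, the difference of $u_0$ through $\eta$, and the difference of $\psi$, the last controlled by (\ref{eq4_psi_Lipschitz}). Then $\delta$ is bounded by $C\|U_p-U_q\|$, with a $\tilde\varphi$-weighted derivative bound.
\item[(ii)] The step-function terms, which depend on $\eta$ through $u_d(\cdot;\eta_*+\eta)$. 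Their layer location is fixed at $t=t_*$ by normalization, so $H(u_d(\cdot;\eta_*+\eta)-l)$ is actually independent of $\eta$ and this term cancels.
\end{itemize}

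For (i), I would split $t$ into regions as in Lemma~\ref{lemma4_estimate_barsol}. Away from the layers, $|t|\ge 3/2$ and $|t-t_*|\ge 3/2$, I use (\ref{eq2_d2V_meanvalue}) with $|V''|\le C\b^2e^{-\a\b\rho}$ and $|V'|\le C\b e^{-\a\b\rho}$, together with the weighted bound $\tilde\varphi\int_t^{t+1}|\tilde u'|\le C\sigma$. This gives a contribution $C\b^2e^{-\a\b\rho}e^{\k R}\|U_p-U_q\|\le C\e\|U_p-U_q\|$ once $\b$ is large depending on $R$. Near $t=-1,0,t_*-1,t_*$, the argument of $V$ crosses $l$ only within an interval of length $\b^{-3/4}$. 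There I use the vanishing $\phi_r(0)=\phi_r(t_*)=0$, so that $|\tilde u_p-\tilde u_q|\le C\b^{-3/4}\|U_p-U_q\|$ in the crossing window. Combined with $|V'|\le C\b$, the integral over that window is $C\b^{1/4}\cdot\b^{-3/4}$, giving an $O(\b^{-1/2})$ Lipschitz constant. One caveat: the $\eta$ and $\psi$ contributions to $\delta$ do not vanish at $t=0$ or $t=t_*$ in general. I expect to handle them by recalling that $u_0(0)=u_0(t_*)=l$ for all $\eta$ and that $\psi(0)=\psi(t_*)=0$, so $\delta$ vanishes at the layer points in every component.

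Outside the crossing windows the $V'$ factors are exponentially small. Summing the regions, I expect $\|h_2^p-h_2^q\|_{W_\infty'}\le C(\b^{-1/2}+\e)\|U_p-U_q\|_{W_\infty}$, and $\b^{-1/2}\le\e$ for $\b$ large. Combined with the $h_1$ and $h_3$ bounds, this finishes the proof.
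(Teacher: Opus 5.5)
Your proposal follows the paper's structure: the split into $h_1,h_2,h_3$, the $G_j$ bound from the Lipschitz estimates on $\nu_1,\nu_2$, the bilinear bound for $F_1$, the region splitting for $F_2$, and the vanishing of $\tilde u_p-\tilde u_q$ at $t=0,t_*$ (the paper's (\ref{eq4_Lipschitz_ur})). Your remark that the step-function terms in $F_2$ do not depend on $U$ and cancel is also correct. There is, however, a genuine gap in how you pass the $F_2$ estimate to the linear problem.

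\textbf{The gap.} You reduce everything to $\|h_2^p-h_2^q\|_{W_\infty'}\le C\e\|U_p-U_q\|_{W_\infty}$ and then apply Lemma~\ref{lemma4_linearsol}. But $\|\cdot\|_{W_\infty'}$ is a weighted sup norm, and $\tilde\varphi=O(1)$ near the layers. In the crossing windows, $F_2(\cdot;U_p)-F_2(\cdot;U_q)$ is not small pointwise. At $|t|\sim\b^{-1}$ one has $|V'(\bar x_2)|\sim\b$. Meanwhile $|\tilde u_p(t)-\tilde u_q(t)|$ is generically of size $\b^{-1}\|U_p-U_q\|_{W_\infty}$; for instance, when $U_p,U_q$ differ only in $c$, this comes through $\phi_0'(\pm0)\neq0$. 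The product is of order $\|U_p-U_q\|_{W_\infty}$ with a constant independent of $\b$ and $\e$. Your own pointwise bound in the window, $C\b^{1/4}\|U_p-U_q\|_{W_\infty}$, is worse still. So the concluding claim $\|h_2^p-h_2^q\|_{W_\infty'}\le C(\b^{-1/2}+\e)\|U_p-U_q\|_{W_\infty}$ is false. Routing $h_2$ through Lemma~\ref{lemma4_linearsol} would only give a Lipschitz constant of order one, which is not a contraction.

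\textbf{The fix.} What you actually derive near $t=-1,0,t_*-1,t_*$ is an $L^1$ bound over a window of width $\b^{-3/4}$, and that is the right estimate. It must be fed through the representation formulas (\ref{eq4_phi1})--(\ref{eq4_phi2}), not through Lemma~\ref{lemma4_linearsol}. This is what the paper does. It defines $\phi_{rj}$ by these formulas with $f=F_2(\cdot;U_r)$ and estimates:
\begin{itemize}
\item $\phi_{p1}'-\phi_{q1}'=\varphi_-(t)\int_{-\infty}^t\varphi_+(f_p-f_q)\,ds$ on $I_1$;
\item $\varphi_-(t)\int_0^t\varphi_+(f_p-f_q)\,ds$ on $I_2$;
\item the difference $A_p-A_q$;
\item the $I_3$ terms.
\end{itemize}
The jump quantities that determine $(\bar c,\bar\eta)$ through $M^{-1}$ are also integrals of this form. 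The $W_\infty$ norm involves $\phi'$ on $I_1\cup I_2$ and $\phi,\phi'$ on $I_3$, but never $\phi''$. So only integrals of $f_p-f_q$ enter, and your window estimate $C\b^{1/4}\cdot\b^{-3/4}$ yields the $C\b^{-1/2}$ constant in (\ref{eq4_F2_Lipschitz}).

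For $h_1$ and $h_3$ your sup-norm route through Lemma~\ref{lemma4_linearsol} is fine. With this change for $h_2$, your argument coincides with the paper's.
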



\begin{proof}
 Let $U_r = (\phi_r, c_r, \eta_r)$ for $r = p, q$.
 Set $h_{r1} = (F_1 (t; U_r), 0, 0)$, 
 $h_{r2} = (F_2 (t; U_r), 0, 0)$ and $h_{r3} = (0, G_1 (U_r), G_2 (U_r)) \in W_\infty'$.
 Since $h_{rk} \in W_\infty'$ for $k = 1, 2, 3$, we have a unique solution 
 $\bar U_{rk} \in W_\infty^\e$ of (\ref{eq4_L}) for $h_{rk}$ 
 by Lemma~\ref{lemma4_linearsol}.

 We denote $\tilde u_r (t) = \tilde u (t; U_r)$ for simplicity.
 We first claim that 
 \begin{equation}\label{eq4_Lipschitz_ur}
  |\tilde u_p (t) - \tilde u_q (t)| 
   \le C \min \{ |t|, |t - t_*|, 1 \} \| U_p - U_q \|_{W_\infty}.
 \end{equation}
 Since 
 \[
 \tilde u_p (t) - \tilde u_q (t) 
 = \tilde u_p (t) - \tilde u_p (0) - (\tilde u_q (t) - \tilde u_q (0))
 = \int_0^t (\tilde u_p' (s) - \tilde u_q' (s)) d s,
 \]
 (\ref{eq4_Lipschitz_ur}) holds true in $0 \le t \le t_* / 2$
 by (\ref{eq2_ui_explicit}), (\ref{eq2_phi0}), 
 (\ref{eq4_psi_Lipschitz}), and $U_p, U_q \in W_\infty^\e$.
 In other intervals, we easily check (\ref{eq4_Lipschitz_ur})
 by $\tilde u_r (t_*) = l$ for $r = p, q$.
 On the other hand, we also estimate 
 \begin{equation}\label{eq4_Lipschitz_dur}
  \tilde \varphi (t) \int_t^{t + 1} |\tilde u_r' (\tau)| d \tau \le C \sigma (t),
   \quad \tilde \varphi (t) 
   \int_t^{t + 1} |\tilde u_p' (\tau) - \tilde u_q' (\tau)| d \tau
   \le C \sigma (t) \| U_p - U_q \|_{W_\infty}.
 \end{equation}

 It is easy to see 
 \begin{equation}\label{eq4_F1_Lipschitz}
  \| \bar U_{p1} - \bar U_{q1} \|_{W_\infty} \le C \e \| U_p - U_q \|_{W_\infty}
 \end{equation}
 from Lemma~\ref{lemma4_linearsol}.
 Define $f_r (t) = F_2 (t; U_r)$ on $r = p, q$.
 Let $\phi_{rj}$ be defined by (\ref{eq4_phi1})--(\ref{eq4_phi2}) 
 for $f (t) = f_r (t)$ on $I_j$ for $j = 1, 2, 3$.
 We denote the constant $A$ in (\ref{eq4_phi2}) by $A_r$, respectively.
 Then, $\bar \phi_{r2} (t) = \phi_{rj} (t)$ on $I_j$ for $j = 1, 2, 3$.

 By the same argument as in Lemma~\ref{lemma_Lipschitz_T}, we easily obtain 
 \[
 \begin{aligned}
  & \| \bar \varphi_- (\phi_{p1}' - \phi_{q1}') \|_{(-\infty, -3/2)} 
  \le C \b^{-1/2} \| U_p - U_q \|_{W_\infty}, \\
  & \| \bar \varphi_+ (\phi_{p3} - \phi_{q3})  \|_{I_3}
  + \| \bar \varphi_+ (\phi_{p3}' - \phi_{q3}')  \|_{I_3} 
  \le C \b^{-1/2} \| U_p - U_q \|_{W_\infty}.  
 \end{aligned}
 \]
 Consider $-3/2 \le t \le -1$.
 We see 
 \begin{equation}\label{eq4_f2pf2q}
  \begin{aligned}
   f_p (t) - f_q (t)
   & = \left( \int_0^1 V' (\bar x_1) d \theta \right)
   ( \tilde u_p (t + 1) - \tilde u_q (t + 1) ) \\
   & - \left( \int_0^1 V' (\bar x_2) d \theta \right) 
   ( \tilde u_p (t) - \tilde u_q (t) ),
  \end{aligned}  
 \end{equation}
 where $\bar x_1 = \theta \tilde u_p (t + 1) + (1 - \theta) \tilde u_q (t + 1)$
 and $\bar x_2 = \theta \tilde u_p (t) + (1 - \theta) \tilde u_q (t)$.
 By the same arguments as in the previous lemma, there are $\bar \a > 0$ 
 and $\rho > 0$ independent of $\b, \e, R$ and $t_*$ such that
 $|V' (\bar x_2)| \le C \b e^{- \a \b \rho}$
 and 
 \[
 |V' (\bar x_1)| \le
 \left\{
 \begin{aligned}
  & C \b e^{- \bar \a \b^{1/4}},
  & & -3/2 \le t \le -1 - \b^{-3/4}, \\
  & C \b,
  & & - 1 - \b^{-3/4} \le t \le - 1.
 \end{aligned}
 \right.
 \]
 Using (\ref{eq4_Lipschitz_ur}), we obtain
 \begin{equation}\label{eq4_estimate_f2pf2q_1}
  \begin{aligned}
  \int_{-3/2}^{-1} |f_p (s) - f_q (s)| ds 
   & = \left( \int_{-3/2}^{-1-\b^{-3/4}} + \int_{-1-\b^{-3/4}}^{-1} \right) 
   |f_p (s) - f_q (s)| ds \\
   & \le C \b^{-1/2} \| U_p - U_q \|_{W_\infty}.   
  \end{aligned}
 \end{equation}
 In the same way as above, we can obtain a similar estimate 
 to (\ref{eq4_estimate_f2pf2q_1}) in $-1 \le t \le 1$ so that 
 $\| \bar \varphi_- (\phi_{p1}' - \phi_{q1}') \|_{I_1} 
 \le C \b^{-1/2} \| U_p - U_q \|_{W_\infty}$.

 We next consider $1 \le t \le t_* - 3/2$.
 In the same way as in (\ref{eq4_F2_0}), we can define $\bar x_r$
 for $f (t) = f_r (t)$.
 Moreover, there is $\bar x$ such that (\ref{eq2_d2V_meanvalue}) holds 
 by the mean value theorem.
 In the same way as in Lemma~\ref{lemma_estimate_T}, there is $\rho > 0$ 
 independent of $\b, \e, R$ and $t_*$ such that $\bar x \ge l + \rho$,
 and then $|V'' (\bar x)| \le C \b^2 e^{-\a \b \rho}$ from the assumption (A$_1$).
 By the same arguments as in the previous lemma, we have 
 \[
 \tilde \varphi (t)
 \left| \varphi_- (t) \int_0^t \varphi_+ (s) (f_p (s) - f_q (s)) ds \right|
 \le C \b^{-1/2} \| U_p - U_q \|_{W_\infty}
 \]
 in $1 \le t \le t_* - 3/2$.

 Next, we consider $t_* - 3/2 \le t \le t_*$.
 We use (\ref{eq4_f2pf2q}) again.
 By the same arguments as above, there are $\bar \a > 0$ and $\rho > 0$
 independent of $\b, \e, R$ and $t_*$ such that
 \[
 |V' (\bar x_1)| \le
 \left\{
 \begin{aligned}
  & C \b e^{- \bar \a \b^{1/4}}, 
  & & t_* - 3/2 \le t \le t_* - 1 - \b^{-3/4}, 
  \quad t_* - 1 + \b^{-3/4} \le t \le t_*, \\ 
  & C \b, 
  & & t_* - 1 - \b^{-3/4} \le t \le t_* - 1 + \b^{-3/4}
 \end{aligned}
 \right.
 \]
 and 
 \[
 |V' (\bar x_2)| \le 
 \left\{
 \begin{aligned}
  & C \b e^{- \bar \a \b^{1/4}}, 
  & & t_* - 3/2 \le t \le t_* - \b^{-3/4}, \\
  & C \b, 
  & & t_* - \b^{-3/4} \le t \le t_*.
 \end{aligned}
 \right.
 \]
 By the same argument as in (\ref{eq4_estimate_f2pf2q_1}), we obtain
 \[
 \left| \varphi_- (t) \int_0^t \varphi_+ (s) (f_p (s) - f_q (s)) ds \right|
 \le C \b^{-1/2} \| U_p - U_q \|_{W_\infty}. 
 \]

 Since the integral terms in (\ref{eq4_phi1})--(\ref{eq4_phi2}) can be estimated
 in similar ways to the arguments above, we obtain 
 $|A_p - A_q| \le C \b^{-1/2} \| U_p - U_q \|_{W_\infty}$
 and $\| \tilde \varphi (\bar \phi_{p2} - \bar \phi_{q2}) \|_{I_2}
 \le C \b^{-1/2} \| U_p - U_q \|_{W_\infty}$,
 which implies
 \begin{equation}\label{eq4_F2_Lipschitz}
  \| \bar U_{p2} - \bar U_{q2} \|_{W_\infty} 
   \le C \b^{-1/2} \| U_p - U_q \|_{W_\infty}.
 \end{equation}

 It is easy to see that 
 $|G_j (U_p) - G_j (U_q)| \le C \e \| U_p - U_q \|_{W_\infty}$
 for $j = 1, 2$, from which
 \begin{equation}\label{eq4_F3_Lipschitz}
  \| \bar U_{p3} - \bar U_{q3} \|_{W_\infty} \le C \e \| U_p - U_q \|_{W_\infty}.
 \end{equation}
 Since $T (U_r) = \bar U_{r1} + \bar U_{r2} + \bar U_{r3}$,
 we conclude Lemma~\ref{lemma4_Lipschitz_T_homoclinic}
 by (\ref{eq4_F1_Lipschitz}), (\ref{eq4_F2_Lipschitz}) and (\ref{eq4_F3_Lipschitz}).
\end{proof}


According to Lemmas~\ref{lemma4_estimate_barsol} 
and \ref{lemma4_Lipschitz_T_homoclinic},
there exists a fixed point $U$ of $T$ in $W_\infty^\e$.
For this $U = (\phi, c, \eta)$, we defined $u (t) \equiv \tilde u (t; U)$.
Then $(u, c_* + c)$ is a solution of (\ref{eq_u}).
Finally, we prove that it is a homoclinic solution, that is, 
$u (-\infty) = u (\infty)$.


\begin{proof}
 We integrate the both sides of (\ref{eq_u}) on $\r$.
 Since both $u (\pm \infty)$ exist by Lemma~\ref{lemma4_estimate_barsol}, 
 the left-hand side is written into $a (c_ * + c) (u (\infty) - u (-\infty))$.
 On the other hand, we show that 
 the integral of the right-hand side on $\r$ is well-defined, and 
 \begin{equation}\label{eq3_integral_V}
  \int_{-\infty}^\infty (V (u (t + 1)) - V (u (t))) dt
   = V (u (\infty)) - V (u (-\infty)).
 \end{equation}
 By the same argument as in (\ref{eq2_F2_0}), we see 
 \[
 V (u (t)) - V (u (-\infty))
 = \left( \int_0^1 V' (\bar x) d \theta \right)
 \left( \int_{-\infty}^t u' (\tau) d \tau \right)
 \]
 in $t < 0$, where $\bar x \equiv \theta u (t) + (1 - \theta) u (-\infty)$.
 Then there exists $\rho > 0$ such that 
 \[
 \begin{aligned}
  & \bar \varphi_- (t) |V (u (t + 1)) - V (u (t))| \\
  & \le \bar \varphi_- (t) 
  (|V (u (t + 1)) - V (u (-\infty))| + |V (u (t)) - V (u (-\infty))|)
  \le C \b e^{- \a \b \rho}  
 \end{aligned}
 \]
 in $t \in (-\infty, -2)$ so that 
 \[
 \int_{-\infty}^{-2} (V (u (t + 1)) - V (u (t))) dt
 = \int_{-2}^{-1} V (u (t)) dt - V (u (-\infty)).
 \]
 Similarly, we have 
 \[
 \int_1^\infty (V (u (t + 1)) - V (u (t))) dt
 = - \int_1^2 V (u (t)) dt + V (u (\infty)).
 \]
 Finally, we see 
 \[
 \begin{aligned}
  \int_{-2}^1 (V (u (t + 1)) - V (u (t))) dt
  = \int_1^2 V (u (t)) dt - \int_{-2}^{-1} V (u (t)) dt, 
 \end{aligned}
 \]
 which leads to (\ref{eq3_integral_V}).

 From the arguments above, we have 
 $(c_ * + c) u (\infty) - V (u (\infty)) = (c_ * + c) u (-\infty) - V (u (-\infty))$.
 There is $\rho > 0$ be independent of $\b$ such that 
 both $u (\infty)$ and $u (- \infty)$ are less than $l - \rho$.
 Hence $|V' (x)| \le C \b e^{-\a \b \rho}$ in $x \le l - \rho$ 
 by the assumption (A$_1$).
 Since $c_* > 0$ and $c$ is close to $0$, 
 we see that $h (x) \equiv (c_ * + c) x - V (x)$ increases
 strictly monotonically in $x \le l - \rho$ because $h' (x)$ is close to 
 $c_ * + c > 0$.
 Therefore we conclude $u (\infty) = u (- \infty)$.
\end{proof}


\section{Periodic solution}
\label{sec_periodic}

The goal in this section is to construct a periodic solution $u$ 
of (\ref{eq_u}) satisfying (\ref{eq_formalsol}) and (\ref{eq_const}).
As mentioned in Introduction, the same iterative strategies as 
in the previous sections cannot be applied to (\ref{eq_u})
because the linearized equation of (\ref{eq_u}) introduced around 
the approximate profile is not invertible 
(compare (\ref{eq4_detM}) and (\ref{eq3_detM})). 
Motivated by \cite{bronsard1997volume}, we take the constraint (\ref{eq_const}) 
into account and introduce the following modified equation corresponding 
to (\ref{eq_u}) on $S \equiv \r / N \z$;
\begin{equation}\label{eq_u_periodic}
 c^2 u'' (t) + a c u' (t) = a (V (u (t + 1)) - V (u (t))) 
  + \dfrac{I_0}{N} \left( \dfrac{1}{N} \int_0^N u (t) dt - u_* \right),
\end{equation}
where 
\[
u_* \equiv (l + \eta) \dfrac{t_*}{N} + (l - \eta) \left( 1 - \dfrac{t_*}{N} \right).
\]
A parameter $I_0$ is assumed to be an arbitrary positive constant
and fixed independently of any other parameters.
Actually, we show that if there is a solution $u \in C^2 (S)$ 
of (\ref{eq_u_periodic}), it becomes a periodic solution of (\ref{eq_u})
satisfying (\ref{eq_const}).
Integrating the both sides of (\ref{eq_u_periodic}) on $(0, N)$, we have 
\[
a \int_0^N (V (u (t + 1)) - V (u (t))) dt
+ I_0 \left( \dfrac{1}{N} \int_0^N u (t) dt - u_* \right) = 0.
\]
It is easy to see that the first term above vanishes because
\[
\begin{aligned}
 \int_0^N V (u (t + 1)) dt
 & = \int_0^{N-1} V (u (t + 1)) dt + \int_{N-1}^N V (u (t + 1)) dt \\
 & = \int_1^N V (u (t)) dt + \int_0^1 V (u (t)) dt = \int_0^N V (u (t)) dt,
\end{aligned}
\]
which implies (\ref{eq_const}).
As a result, we see that (\ref{eq_u}) holds true.

Set $c_* = c_0 (\eta_*)$, $c \to c_* + c$, and $\eta \to \eta_* + \eta$, 
where $\eta_*$ is defined in Section~\ref{sec_Introduction}.
We consider (\ref{eq_u_periodic}) in $(0, t_*) \cup (t_*, N)$
($\equiv I_1 \cup I_2$) under $u (0) = u (t_*) = l$.
In order to construct a periodic solution in (\ref{eq_u_periodic}) with the period $N$,
the solution must satisfy the matching conditions at $t = 0$ and $t = t_*$ as
\begin{equation}\label{eq3_C1_matching}
 \left\{
  \begin{aligned}
   & u' (+ 0) = u' (N - 0), \\
   & u' (t_* + 0) = u' (t_* - 0).
  \end{aligned}
  \right.
\end{equation}
Define $u_1 = l - \eta_* - \eta$ and $u_2 = l + \eta_* + \eta$.
Let $u_0 (t; \eta)$ be a periodic function with period $N$, defined by 
\begin{equation}\label{eq3_up1}
 u_0 (t; \eta) 
  = u_i (t; \eta_* + \eta) \chi_R (t) + u_1 (1 - \xi_R (t + 2 R)) + u_2 \xi_R (t - R)
\end{equation}
in $- 2 R \le t \le t_* - 2 R$ and 
\begin{equation}\label{eq3_up2}
 u_0 (t + t_*; \eta) 
  = u_d (t; \eta_* + \eta) \chi_R (t) + u_2 (1 - \xi_R (t + 2 R)) + u_1 \xi_R (t - R)
\end{equation}
in $- 2 R \le t \le N - t_* - 2 R$,
where $R$ is sufficiently large and independent of $\b, N$
and $\xi_R$ is given in Section~\ref{sec_homoclinic}.
Here $\chi_R (t) = \chi (t / R)$, where $\chi (x) \in C^\infty (\r)$ is defined by 
$\chi (x) \equiv \xi (x + 2) (1 - \xi (x - 1))$ and satisfies
\[
\chi (x) = 
\left\{
\begin{aligned}
 & 1, 
 & & |x| \le 1, \\
 & 0, 
 & & |x| \ge 2.
\end{aligned}
\right.
\]
It is clear that $u_0$ is $C^1$ and twice differentiable in $t \in S$.
We note that $u_0 (0) = u_0 (t_*) = l$.

We next derive error terms between $u$ and $u_0$ in (\ref{eq_u_periodic})
and the lowest-order term of $c$ and $\eta$ close to $0$.
Let $\bar \varphi_\pm (t) = e^{\pm \k t / 2}$ for $\k \equiv a / c_*$, 
which were introduced in Section~\ref{sec_Heteroclinic}.
By substituting $u (t) = u_i (t; \eta_* + \eta) + \phi_{0i} (t)$ 
in (\ref{eq_u_periodic}) and using the same argument as in the previous sections, 
we formally obtain $\phi_{0i} (t) \equiv ( c - c_\eta \eta ) \phi_0 (t; \eta_*)$.
Moreover, we denote
\[
\zeta_{0i} \equiv \phi_{0i}' (+0) - \phi_{0i}' (-0)
= ( c - c_\eta \eta ) \dfrac{a V_0}{c_*^3} e^{-\k}.
\]
Next, we derive error terms between $u$ and $u_0$ around $t = t_*$.
Replacing $t \to t + t_*$ and substituting 
$u (t + t_*) = u_d (t; \eta_* + \eta) + \phi_{0d} (t)$ in (\ref{eq_u_periodic}), 
we formally obtain $\phi_{0d} (t) \equiv - ( c - c_\eta \eta ) \phi_0 (t; \eta_*)$.
Moreover, we denote
\[
\zeta_{0d} \equiv \phi_{0d}' (+0) - \phi_{0d}' (-0)
= - ( c - c_\eta \eta ) \dfrac{a V_0}{c_*^3} e^{-\k}.
\]
In order to emphasize the $(c, \eta)$-dependencies of 
$\phi_{0i}$, $\phi_{0d}$, $\zeta_{0i}$ and $\zeta_{0d}$, 
we may denote $\phi_{0i} (t; c, \eta)$, $\phi_{0d} (t; c, \eta)$, 
$\zeta_{0i} (c, \eta)$ and $\zeta_{0d} (c, \eta)$.

According to the arguments in Section~\ref{sec_Heteroclinic},
$\phi_{0i} (t)$ is dominant in the error between $u (t)$ and $u_i (t)$ near $t = 0$,
while $\phi_{0d} (t - t_*)$ is so between $u (t)$ and $u_d (t - t_*)$ near $t = t_*$.
We denote $c_0 = c_0 (\eta_* + \eta)$ for simplicity.
In order to evaluate the error away from $t = 0$ and $t = t_*$, 
we introduce a unique solution $\psi$ in a linear problem 
\begin{equation}\label{eq3_outersol}
 \psi'' (t) + \dfrac{a}{c_* + c} \psi' (t) = f_{o1} (t) + f_{o2} (t) 
  \quad (\equiv f_o (t)),
  \quad t \in I_1 \cup I_2
\end{equation}
with $\psi (0) = \psi (t_*) = 0$, where $f_{oj} (t)$ for $j = 1, 2$ are given by 
\[
\begin{aligned}
 f_{o1} (t) & \equiv - u_0'' (t) - \dfrac{a}{c_* + c} u_0' (t) \\
 & + \dfrac{c_0^2}{(c_* + c)^2} 
 \left( u_i'' (t; \eta_* + \eta) + \dfrac{a}{c_0} u_i' (t; \eta_* + \eta) 
 \right) \chi_R (t) \\
 & + \dfrac{c_*^2}{(c_* + c)^2} ( c - c_\eta \eta )
 \left( \dfrac{2}{c_*} u_i'' (t; \eta_*) 
 + \dfrac{a}{c_*^2} u_i' (t; \eta_*) \right) \chi_R (t) \\
 & + \dfrac{c_0^2}{(c_* + c)^2} 
 \left( u_d'' (t - t_*; \eta_* + \eta) + \dfrac{a}{c_0} u_d' (t - t_*; \eta_* + \eta) 
 \right) \chi_R (t - t_*) \\
 & + \dfrac{c_*^2}{(c_* + c)^2} ( c - c_\eta \eta ) 
 \left( \dfrac{2}{c_*} u_d'' (t - t_*; \eta_*) 
 + \dfrac{a}{c_*^2} u_d' (t - t_*; \eta_*) \right) \chi_R (t - t_*),
\end{aligned}
\]
and 
\[
\begin{aligned}
 f_{o2} (t) & \equiv 
 - (\phi_{0i} (t) \chi_R'' (t) + \phi_{0d} (t - t_*) \chi_R'' (t - t_*)) \\
 & - 2 (\phi_{0i}' (t) \chi_R' (t) + \phi_{0d}' (t - t_*) \chi_R' (t - t_*)) \\
 & - \dfrac{a}{c_* + c} 
 (\phi_{0i} (t) \chi_R' (t) + \phi_{0d} (t - t_*) \chi_R' (t - t_*)).
\end{aligned}
\]
Let $\tilde \varphi (t) \in C (S)$ be defined by 
\[
\tilde \varphi (t) = 
\left\{
\begin{aligned}
 & \min \{ \bar \varphi_+ (t), \bar \varphi_- (t - t_*), N \}, 
 & & 0 \le t \le t_*, \\
 & \min \{ \bar \varphi_+ (t - t_*), \bar \varphi_- (t - N), N \}, 
 & & t_* \le t \le N.
\end{aligned}
\right.
\]
As in the previous section, we will use $t_0 \equiv (2 \log N) / \k$.
Then it holds that 
$\bar \varphi (t_0) = \tilde \varphi (t_* - t_0) 
= \tilde \varphi (t_* + t_0) = \tilde \varphi (N - t_0) = N$.
In order to emphasize the $(c, \eta)$-dependencies of $\psi$ and $f_o$,
we may denote $\psi (t; c, \eta)$ and $f_o (t; c, \eta)$.


\begin{lemma}\label{lemma_psi}
 Let $\psi$ be the solution of (\ref{eq3_outersol}) under $\psi (0) = \psi (t_*) = 0$
 satisfying
 \[
 \psi \in C^1 (\overline{I_1}) \cap C^1 (\overline{I_2})
 \cap C^2 ([0, t_*-1]) \cap C^2 ([t_*-1, t_*])
 \cap C^2 ([t_*, N-1]) \cap C^2 ([N-1, N]).
 \]
 Suppose that $\e > 0$ is small.
 If $R$ and $N$ are sufficiently large, then there is a constant $C > 0$ 
 independent of $\e, R$ and $N$ such that $\psi$ can be estimated as
 $ |\psi (t)| \le C \e$ and $\tilde \varphi (t) |\psi' (t)| \le C \sigma (t) \e$
 in $t \in I_1 \cup I_2$ and $|c| + |\eta| \le \e$, 
 where $\sigma (t)$ is given by 
 \[
 \sigma (t) = 
 \left\{
 \begin{aligned}
  & 1,
  & & t \in (0, t_* - 2 R) \cup (t_*, N - 2 R), \\
  & \tilde \varphi (t),
  & & t \in [t_* - 2 R, t_*] \cup [N - 2 R, N].
 \end{aligned}
 \right.
 \]
 In addition, $\psi$ is Lipschitz continuous with respect to $(c, \eta)$, that is,
 \[
 \begin{aligned}
  |\psi (t; c_p, \eta_p) - \psi (t; c_q, \eta_q)|
  & \le C (|c_p - c_q| + |\eta_p - \eta_q|), \\
  \tilde \varphi (t) |\psi' (t; c_p, \eta_p) - \psi' (t; c_q, \eta_q)|
  & \le C \sigma (t) (|c_p - c_q| + |\eta_p - \eta_q|)
 \end{aligned} 
 \]
 in $t \in I_1 \cup I_2$, and $|c_r| + |\eta_r| \le \e$ for $r = p, q$.
\end{lemma}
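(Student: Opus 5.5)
We write $\psi$ explicitly on each interval by variation of constants, exactly as in the proof of Lemma~\ref{lemma4_psi}. Set $\hat \varphi (t) = e^{- \hat \k t}$ with $\hat \k = a / (c_* + c)$. On $I_1$ we take
\[
\psi (t) = A_1 (1 - \hat \varphi (t)) - \int_0^t \dfrac{1}{\hat \k} \hat \varphi (t - s) f_o (s) ds + \int_0^t \dfrac{1}{\hat \k} f_o (s) ds,
\]
and on $I_2$ the same formula with $0$ replaced by $t_*$ and a constant $A_2$. The constants $A_1, A_2$ are fixed by $\psi (t_*) = 0$ and $\psi (N) = \psi (0) = 0$. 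This is solvable uniquely because $1 - \hat \varphi (t_*)$ and $1 - \hat \varphi (N - t_*)$ are close to $1$: by (A$_2$), both $t_*$ and $N - t_*$ are at least $N \min \{ t_1, 1 - t_2 \} \to \infty$. This is where (A$_2$) is used. The constants are therefore bounded by the sup of the integral terms at the endpoints. The regularity classes follow from the piecewise structure of $u_i$, $u_d$ and $\phi_0$, whose only kinks are at $t = -1, 0$ modulo shifts.

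Next we localize the support of $f_o$. By construction (\ref{eq3_up1})--(\ref{eq3_up2}), $f_{o1}$ vanishes where $\chi_R = 1$ and $u_0$ is a transition layer solving the $\b = \infty$ profile equation at speed $c_0$. There the residual with speed $c_* + c$ is cancelled to first order by the terms $(c - c_\eta \eta)$. This leaves $O(\e^2)$ terms times $u_i''$, $u_i'$, which decay like $e^{- \k |t|}$ for $t \ge 0$ and are supported in $[-1, 0]$ for $t < 0$. Where $\xi_R$ or $\chi_R$ is constant and $u_0$ is the constant $u_1$ or $u_2$, $f_{o1}$ vanishes too. The nontrivial contributions come from the cutoff transitions $|t| \in [R, 2R]$ and $|t - t_*| \in [R, 2R]$.

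For $t \in [R, 2R]$, $u_i (t) - u_2$ is exponentially small, so $f_{o1}, f_{o2} = O(\e e^{- 3\k t/4} / R)$, as in Lemma~\ref{lemma4_psi}. For $t \in [-2R, -R]$, $u_i (t; \eta_* + \eta) = u_i (-1; \eta_* + \eta)$ is constant. Its difference from $u_1 = l - \eta_* - \eta$ is $O(|\eta|)$ by (\ref{eq_necessry_cond}), since $u_i (-\infty; \eta_*) = l - \eta_*$. There $f_{o1}$ is an exact derivative, as in (\ref{eq4_fo1_int}), and $f_{o2}$ is likewise a derivative, as in (\ref{eq4_fo2_int}), because $\phi_{0i} (t) = \phi_{0i} (-1)$ is constant. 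Hence $\int f_o = O(\e)$ over each window, and $\int \hat \varphi (t - s) f_o (s) ds = O(\e / R)$ there. The same holds near $t_*$ with $u_d$.

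Inserting these bounds, the primitive $\int_0^t f_o$ is uniformly $O(\e)$ on the whole period, independently of $N$. The convolution term is $O(\e^2 e^{- \k t/2} + \e / R)$, and it decays away from the windows like $e^{- \k \operatorname{dist}/2}$. This gives $|\psi| \le C \e$. Since $\psi' = \hat \k A_j \hat \varphi + \int \hat \varphi (t - s) f_o (s) ds$, we get $|\psi'| \le C \e e^{- \k \operatorname{dist}(t, \{0, t_*\})/2}$ away from the windows, and $O(\e / R)$ inside them.

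The \textbf{main obstacle} is matching this against the weight $\tilde \varphi$ to get $\tilde \varphi |\psi'| \le C \sigma \e$. On $(0, t_* - 2R)$ the bound $|\psi'| \le C \e e^{- \k t/2}$ gives this up to the cap $N$ in $\tilde \varphi$. Past the left window $[-2R, -R]$, modulo $N$, the exponential decay of $\psi'$ relative to the growing weight must be handled explicitly. I would treat this by splitting at $t_0 = (2 \log N)/\k$, where the weight saturates at $N$, and by choosing $N$ large relative to $R$ so that $N e^{- \k (N - t_* - 2R)/2}$ is negligible. This is where the ordering $\e$, then $R$, then $N$ enters. The near-endpoint region, where $\sigma = \tilde \varphi$, is the easier case, since there the bound $|\psi'| \le C \e$ suffices.

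The Lipschitz estimate follows by the same computation applied to $f_o (\cdot; c_p, \eta_p) - f_o (\cdot; c_q, \eta_q)$. All coefficients, namely $c_0 (\eta_* + \eta)$, $c - c_\eta \eta$, $\hat \k$ and $u_{i,d} (\cdot; \eta_* + \eta)$, are smooth in $(c, \eta)$, and $A_j$ depends linearly on the data. Hence each $\e$ above is replaced by $|c_p - c_q| + |\eta_p - \eta_q|$.
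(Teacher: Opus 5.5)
Your proposal is correct and takes essentially the same route as the paper, which proves this lemma by repeating the argument of Lemma~\ref{lemma4_psi}. The shared steps are: the explicit variation-of-constants formula, with constants fixed by $\psi(t_*)=0$ and $\psi(N)=0$; localization of $f_o$, with $O(\e^2)$ residuals near the layers once $R$ is large depending on $\e$; the exact-derivative structure (\ref{eq4_fo1_int})--(\ref{eq4_fo2_int}) in the left cutoff windows, which gives $O(\e)$ primitives and $O(\e/R)$ convolutions; and the same computation for the Lipschitz bounds. You also make explicit the use of (A$_2$) to keep $t_*$ and $N-t_*$ large, which the paper leaves implicit, and your ``main obstacle'' needs no extra argument, since the minimum in $\tilde\varphi$ already gives $\tilde\varphi(t)\le e^{\k(t-t_*)/2}$ on $(t_*,N-2R)$.
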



This lemma can be shown in the same way as in Lemma~\ref{lemma4_psi}
so that we omit the details of the proof.
Extend $\psi (t)$ periodically by 
$\psi (t) \equiv \psi (t - k N)$ in $k N \le t \le (k + 1) N$ for any $k \in \z$.
We define $\nu_1$ and $\nu_2$ by $\nu_1 = \psi' (+ 0) - \psi' (- 0)$ 
and $\nu_2 = \psi' (t_* + 0) - \psi' (t_* - 0)$.
In order to emphasize the $(c, \eta)$-dependencies of $\nu_1$ and $\nu_2$,
we may denote $\nu_1 (c, \eta)$ and $\nu_2 (c, \eta)$.
Now we calculate the leading order terms of $\nu_1$ and $\nu_2$.
Throughout this section, we denote general positive constants 
independent of $\b, \e, N$ and $R$ by $C$.
We note that (\ref{eq4_fo1_int}) and (\ref{eq4_fo2_int}) hold true.
Expanding $u_d (-1; \eta_* + \eta) - u_2$ and $u_i (-1; \eta_* + \eta) - u_1$ 
as Taylor series with respect to $\eta$, we have 
$u_d (-1; \eta_* + \eta) - u_2 \approx (e^\k - 2) \eta$
and $u_i (-1; \eta_* + \eta) - u_1 \approx - (e^\k - 2) \eta$,
while
\[
\phi_{0d} (-1) = - ( c - c_\eta \eta ) \dfrac{a \eta_*}{c_*^2},
\quad \phi_{0i} (-1) = ( c - c_\eta \eta ) \dfrac{a \eta_*}{c_*^2}.
\]
Then we see
\[
\left| \nu_1 + \dfrac{a^2 \eta_*}{c_*^3} c + \dfrac{a}{c_*} \eta
\right| \le C \e^2,
\quad 
\left| \nu_2 - \left( \dfrac{a^2 \eta_*}{c_*^3} c + \dfrac{a}{c_*} \eta \right) 
\right| \le C \e^2
\]
in $|c| + |\eta| \le \e$.
Moreover, it holds that 
\[
\begin{aligned}
 & \left| \nu_1 (c_p, \eta_p) + 
 \dfrac{a^2 \eta_*}{c_*^3} c_p + \dfrac{a}{c_*} \eta_p
 - \left( \nu_1 (c_q, \eta_q)
 + \dfrac{a^2 \eta_*}{c_*^3} c_q + \dfrac{a}{c_*} \eta_q \right) \right| \\
 & \le C \e (|c_p - c_q| + |\eta_p - \eta_q|),
\end{aligned}
\]
and 
\[
\begin{aligned}
 & \left| \nu_2 (c_p, \eta_p) 
 - \left( \dfrac{a^2 \eta_*}{c_*^3} c_p + \dfrac{a}{c_*} \eta_p \right) 
 - \left( \nu_2 (c_q, \eta_q) 
 - \left( \dfrac{a^2 \eta_*}{c_*^3} c_q + \dfrac{a}{c_*} \eta_q \right)
 \right) \right| \\
 & \le C \e (|c_p - c_q| + |\eta_p - \eta_q|) 
\end{aligned}
\]
in $|c_r| + |\eta_r| \le \e$ for $r = p, q$.

Let $\tilde \psi (t) \in C^2 (\overline{I_1}) \cap C^2 (\overline{I_2})$ 
be a solution of 
\[
\tilde \psi'' + \dfrac{a}{c_* + c} \tilde \psi' 
= \dfrac{I_0}{N} \dfrac{1}{(c_* + c)^2},
\quad t \in I_1 \cup I_2
\]
under $\tilde \psi (0) = \tilde \psi (t_*) = 0$.
It is easy to have the explicit form of $\tilde \psi$ as
\begin{equation}\label{eq3_tildepsi}
 \tilde \psi (t) = 
  \left\{
   \begin{aligned}
    & \dfrac{I_0}{a (c_* + c) N} 
    \left( t - t_* \dfrac{1 - e^{- \hat \k t}}{1 - e^{- \hat \k t_*}} \right),
    & & t \in I_1, \\
    & \dfrac{I_0}{a (c_* + c) N} 
    \left( t - t_* - (N - t_*) 
    \dfrac{1 - e^{- \hat \k (t - t_*)}}{1 - e^{- \hat \k (N - t_*)}} \right),
    & & t \in I_2,
   \end{aligned}
  \right.
\end{equation}
where $\hat \k = a / (c_* + c)$ as introduced in the proof of Lemma~\ref{lemma_psi}.
By direct calculations, we see that $| \tilde \psi (t) | \le C$ 
and $\tilde \varphi (t) | \tilde \psi' (t) | \le C$ in $t \in I_1 \cup I_2$.
Since $\tilde \psi'$ is not continuous at $t = 0, t_*$, we define
$\tilde \nu_1 \equiv \tilde \psi' (+ 0) - \tilde \psi' (- 0)$
and $\tilde \nu_2 \equiv \tilde \psi' (t_* + 0) - \tilde \psi' (t_* - 0)$.
We easily estimate $\tilde \nu_1$ and $\tilde \nu_2$ as
\[
\left| \tilde \nu_1 + \dfrac{I_0}{c_*^2} \dfrac{t_*}{N} \right| \le C \e,
\quad \left| \tilde \nu_2 + \dfrac{I_0}{c_*^2} \dfrac{N - t_*}{N} \right| \le C \e
\]
in $|c| \le \e$ if $N$ is sufficiently large.
In order to emphasize the $c$-dependencies of $\tilde \psi$, 
$\tilde \nu_1$ and $\tilde \nu_2$,
we may denote $\tilde \psi (t; c)$, $\tilde \nu_1 (c)$ and $\tilde \nu_2 (c)$.
Note that $\tilde \psi$, $\tilde \nu_1$ and $\tilde \nu_2$ are independent of $\eta$.
Then it follows from (\ref{eq3_tildepsi}) that 
$| \tilde \psi (t; c_p) - \tilde \psi (t; c_q) | \le C |c_p - c_q|$,
$\tilde \varphi (t) | \tilde \psi' (t; c_p) - \tilde \psi' (t; c_q) | 
\le C |c_p - c_q|$,
$| \tilde \nu_1 (c_p) - \tilde \nu_1 (c_q) | \le C |c_p - c_q|$,
and 
$| \tilde \nu_2 (c_p) - \tilde \nu_2 (c_q) | \le C |c_p - c_q|$
in $t \in I_1 \cup I_2$ and $|c_r| \le \e$ for $r = p, q$.
Finally, we extend $\tilde \psi (t)$ periodically to the whole line 
in the same way as in the case of $\psi (t)$.

For $U \equiv (\phi, c, \eta)$, we define $\tilde u (t) = \tilde u (t; U)$ by 
\[
\begin{aligned}
 \tilde u (t) = u_0 (t) + \phi_{0i} (t) \chi_R (t) 
 + \phi_{0d} (t - t_*) \chi_R (t - t_*) + \psi (t) 
 + I (U) \tilde \psi (t) + \phi (t)
\end{aligned}
\]
in $[- 2 R, N - 2 R]$, where $I (U) \equiv \hat I (U) / \tilde I (c)$
for 
\[
\begin{aligned}
 \hat I (U) & \equiv 
 \dfrac{1}{N} \int_0^N (u_0 (t) + \phi_{0i} (t) \chi_R (t) 
 + \phi_{0d} (t - t_*) \chi_R (t - t_*) + \psi (t) + \phi (t)) dt - u_*, \\
 \tilde I (c) & \equiv 1 - \dfrac{1}{N} \int_0^N \tilde \psi (t) dt.
\end{aligned}
\]
This definition leads to 
\[
I (U) = \dfrac{1}{N} \int_0^N \tilde u (t) dt - u_*.
\]
By means of a natural extension, $\tilde u$ is considered as a periodic function on $S$.
In order to prove Theorem~\ref{thm_3}, it is sufficient to find 
$U = (\phi, c, \eta) \in W_N$ 
such that $\tilde u$ satisfies (\ref{eq_u_periodic}) and (\ref{eq3_C1_matching}),
where $W_N$ is defined by 
\[
W_N \equiv \{ U = (\phi, c, \eta) \in C (S) \times \r^2 \mid 
\phi \in C^1 (\overline{I_1}) \cap C^1 (\overline{I_2}),
\quad \phi (0) = \phi (t_*) = 0 \},
\]
equipped with the norm $\| \cdot \|_{W_N}$ defined by 
$\| U \|_{W_N} \equiv \| \phi \| + |c| + |\eta|$
and $\| \phi \| \equiv 
(\| \phi' \tilde \varphi \|_{I_1} + \| \phi' \tilde \varphi \|_{I_2}) / \sqrt{\e}$.
The parameter $\e > 0$ is assumed to be sufficiently small 
and independent of $\b, N, R$.
Contrary to $W$ in Section~\ref{sec_Heteroclinic}
and $W_\infty$ in Section~\ref{sec_homoclinic}, the present norm $\| \cdot \|$ 
includes an additional factor of $1/\sqrt{\e}$ to ensure that the influence of 
$\phi$ on $I (U)$ remains sufficiently small.
Clearly, $W_N$ is a Banach space.
Actually, we prove $\| \phi \|_S \le C \sqrt{\e} \| \phi \|$.
Since (\ref{ea2_poincare_ineq}) holds true by $\phi (0) = 0$, we have 
\[
|\phi (t)| \le \sqrt{\e} \| \phi \| \int_0^t 
\max \{ \varphi_- (s), \dfrac{1}{N} \} ds \le C \sqrt{\e} \| \phi \|
\]
in $t \in [0, t_*/2]$.
In other intervals, we can also estimate $\phi$ in similar ways.
Moreover, we define a closed ball in $W_N$ with the radius $\e$ by 
$W_N^\e \equiv \{ U \in W_N \mid \| U \|_{W_N} \le \e \}$.

We estimate $I (U)$ in $U \in W_N^\e$.


\begin{lemma}\label{lemma3_IU}
 If $\e$ is sufficiently small and $N$ is sufficiently large, 
 then $\tilde I (c) \ge 1$ in $|c| \le \e$.
 Moreover, there is $C > 0$ independent of $\e, N$ and $R$ such that 
 \begin{equation}\label{eq3_Ic_Lipschitz}
  |\tilde I (c_p) - \tilde I (c_q)| \le C |c_p - c_q|
 \end{equation}
 in $|c_r| \le \e$ for $r = p, q$.
 Similarly, it holds that 
 \begin{equation}\label{eq3_IU}
  \left| I (U) - \dfrac{1}{\tilde I (0)}
   \left( \dfrac{a \eta_*}{c_*^2} c + \eta \right) \dfrac{N - 2 t_*}{N} 
  \right| \le C \e^{3/2}
 \end{equation}
 in $U \in W_N^\e$ and 
 \begin{equation}\label{eq3_IU_Lipschitz}
  \begin{aligned}
   & \left| I (U_p) - \dfrac{1}{\tilde I (0)}
   \left( \dfrac{a \eta_*}{c_*^2} c_p + \eta_p \right) \dfrac{N - 2 t_*}{N} 
   - \left( I (U_q) - \dfrac{1}{\tilde I (0)}
   \left( \dfrac{a \eta_*}{c_*^2} c_q + \eta_q \right) \dfrac{N - 2 t_*}{N} 
   \right) \right| \\
   & \le C \sqrt{\e} \| U_p - U_q \|   
  \end{aligned}
 \end{equation}
 in $U_r \in W_N^\e$ for $r = p, q$.
\end{lemma}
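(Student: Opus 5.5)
We treat the three claims in turn, working directly from the explicit formula (\ref{eq3_tildepsi}) for $\tilde\psi$ and from the definition of $\hat I(U)$.

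\emph{Lower bound and Lipschitz bound for $\tilde I$.} On $I_1$ write $\tilde\psi(t)=\frac{I_0}{a(c_*+c)N}\bigl(t-t_*\frac{1-e^{-\hat\k t}}{1-e^{-\hat\k t_*}}\bigr)$. Since $t\mapsto 1-e^{-\hat\k t}$ is concave and the bracket vanishes at $t=0$ and $t=t_*$, the bracket is nonpositive on $[0,t_*]$. The same argument applies on $I_2$, so $\tilde\psi\le0$ and hence $\tilde I(c)\ge1$. For the quantitative version we integrate exactly: $\int_0^{t_*}\tilde\psi\,dt=\frac{I_0}{a(c_*+c)N}\bigl(\frac{t_*^2}{2}-t_*^2+O(t_*)\bigr)$. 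By (A$_2$) this gives $\tilde I(c)=1+\frac{I_0}{2a(c_*+c)}\frac{t_*^2+(N-t_*)^2}{N^2}+O(1/N)$. This expression is bounded, bounded below by $1$, and smooth in $c$ with derivative bounded uniformly in $N$, which yields (\ref{eq3_Ic_Lipschitz}).

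\emph{Expansion of $\hat I(U)$.} We split $\frac1N\int_0^N$ into four contributions.
\begin{itemize}
\item[(i)] The term $\frac1N\int_0^N u_0\,dt-u_*$. Outside the layers $|t|\le2R$ and $|t-t_*|\le2R$, $u_0$ equals $u_2=l+\eta_*+\eta$ on $(2R,t_*-2R)$ and $u_1=l-\eta_*-\eta$ on the complement. Since $u_*$ is built from $l\pm(\eta_*+\eta)$ with weights $t_*/N$ and $1-t_*/N$, this difference is $O(R/N)$ and therefore negligible.
\item[(ii)] The corrections $\phi_{0i}\chi_R$ and $\phi_{0d}\chi_R$. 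These are $O(\e)$ and supported on sets of length $O(R)$, so they contribute $O(\e R/N)$.
\item[(iii)] The outer term $\frac1N\int\psi$. This carries the main order. From the proof of Lemma~\ref{lemma4_psi}, $\psi$ tends to the plateau value $A_1+\frac1{\hat\k}\int_0^t f_o$ on the bulk of $I_1$, and similarly on $I_2$. Using (\ref{eq4_fo1_int}) and (\ref{eq4_fo2_int}) together with the Taylor expansions $u_d(-1;\eta_*+\eta)-u_2\approx(e^\k-2)\eta$ and $\phi_{0d}(-1)=-(c-c_\eta\eta)a\eta_*/c_*^2$, the plateau on $I_1$ is $\frac{a\eta_*}{c_*^2}c+\eta+O(\e^2)$, and on $I_2$ it is the negative of this by the $u_d=2l-u_i$ symmetry. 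The constant $e^\k-2$ cancels against $c_\eta$ through $a\eta_*/V_0=1-e^{-\k}$, exactly as $e^\k-1$ did before. Averaging with weights $t_*/N$ and $(N-t_*)/N$, up to $O(R/N+\e^2)$ corrections, produces $(\frac{a\eta_*}{c_*^2}c+\eta)\frac{N-2t_*}{N}$. The sign convention must be fixed carefully against the definition of $u_*$.
\item[(iv)] The term $\frac1N\int\phi$. Since $\|\phi\|_S\le C\sqrt\e\|\phi\|\le C\e^{3/2}$ for $U\in W_N^\e$, this is $O(\e^{3/2})$.
\end{itemize}
This explains the $1/\sqrt\e$ weighting in the norm. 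Dividing by $\tilde I(c)$ and replacing it by $\tilde I(0)$ costs $C|c|\cdot\e\le C\e^2$, which gives (\ref{eq3_IU}) once $N$ is large relative to $R$ and $\e$.

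\emph{Lipschitz estimate (\ref{eq3_IU_Lipschitz}).} We repeat the decomposition for differences. The $\phi$-part is linear and bounded by $C\sqrt\e\|\phi_p-\phi_q\|$. The $\psi$-part minus its linear main term is Lipschitz with constant $C\e$, using (\ref{eq4_psi_Lipschitz}) and the second-order remainders of the Taylor expansions in $(c,\eta)$. The quotient by $\tilde I$ is controlled via (\ref{eq3_Ic_Lipschitz}), since $|\hat I|\le C\e$.

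The main obstacle is step (iii): computing the plateau value of $\psi$ precisely enough to leave only an $O(\e^2)$ error. This requires tracking the cut-off contributions on $(t_*-2R,t_*-R)$ and $(N-2R,N-R)$ and the decay of $A_1e^{-\hat\k t}$. We would first establish the plateau formula on each interval via (\ref{eq4_psi1}), and only then average.
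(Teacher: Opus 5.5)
\textbf{Verdict: same route as the paper, but step (iii) has a sign error.} Your decomposition of $\hat I(U)$ into the $u_0$, $\phi_{0i}\chi_R$, $\phi_{0d}\chi_R$, $\psi$ and $\phi$ contributions, followed by division by $\tilde I(c)$, is the paper's route. Your concavity argument giving $\tilde\psi\le 0$ is a cleaner way to get $\tilde I(c)\ge 1$ than the paper's asymptotic formula.

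\textbf{The gap.} Step (iii) carries all the content of (\ref{eq3_IU}), and there the plateau values you assert have the wrong sign. Your averaging also does not follow from them. Set $P=\frac{a\eta_*}{c_*^2}c+\eta$. With plateau $+P$ on $I_1=(0,t_*)$ and $-P$ on $I_2$, the average is $P\frac{t_*}{N}-P\frac{N-t_*}{N}=-P\frac{N-2t_*}{N}$, which is the negative of the claimed main term. Your remark that the sign must be fixed ``against $u_*$'' points to the wrong place: $u_*$ only enters through term (i), which is $O(R/N)$.

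\textbf{Correct plateau on $I_1$.} The plateau on $I_1$ is actually $-P$. Write (\ref{eq4_psi1}) as
$\psi(t)=A_1(1-\hat\varphi(t))+\frac{1}{\hat{\k}}\int_0^t(1-\hat\varphi(t-s))f_o(s)\,ds$.
\begin{itemize}
\item On $(t_*-2R,t_*-R)$ one has exactly $f_o=-D\,(\xi_R''+\hat{\k}\,\xi_R')(t-t_*+2R)$, where $D\equiv u_d(-1;\eta_*+\eta)-u_2+\phi_{0d}(-1)$. This is what (\ref{eq4_fo1_int})--(\ref{eq4_fo2_int}) encode.
\item Since $\xi_R'$ vanishes at both ends, this piece contributes exactly $-D$ to $\psi(t)$ for $t\ge t_*-R$. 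All other contributions of $f_o$ are $O(\e^2)$ or exponentially small.
\item Hence $\psi(t_*)=0$ forces $A_1=D+O(\e^2)$, and $A_1$ is the plateau value on $I_1$.
\item Now $u_d(-1;\eta_*+\eta)-u_2=(e^{\k}-2)\eta+O(\e^2)$, while $\phi_{0d}(-1)=-(c-c_\eta\eta)\frac{a\eta_*}{c_*^2}=-\frac{a\eta_*}{c_*^2}c-(e^{\k}-1)\eta$.
\item So the $e^{\k}$ terms do not cancel ``exactly as before''; they leave $-\eta$. This gives $D=-P+O(\e^2)$.
\end{itemize}

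\textbf{Plateau on $I_2$ and the average.} The same computation near $t=N$ uses $u_i(-1;\eta_*+\eta)-u_1=-(e^{\k}-2)\eta+O(\e^2)$ and $\phi_{0i}(-1)=+(c-c_\eta\eta)\frac{a\eta_*}{c_*^2}$. It gives plateau $+P$ on $I_2$. The weighted average is then $+P\frac{N-2t_*}{N}$, as required. You could have checked this against the paper's stated asymptotics: the computation gives $\nu_1\approx\hat{\k}A_1\approx-\bigl(\frac{a^2\eta_*}{c_*^3}c+\frac{a}{c_*}\eta\bigr)$, as the paper states.

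\textbf{Minor point on (\ref{eq3_IU_Lipschitz}).} The $C\e$-Lipschitz bound for the nonlinear remainder of the $\psi$-part does not follow from the Lipschitz bound in Lemma~\ref{lemma_psi}, which only has constant $C$. It has to come from the explicit representation $A_1=D+O(\e^2)$. There $D$ is linear in $(c,\eta)$ up to a Taylor remainder in $\eta$, and the $O(\e^2)$ terms are themselves $C\e$-Lipschitz in $(c,\eta)$.
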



\begin{proof}
 It follows from (\ref{eq3_tildepsi}) and the assumption that 
 \[
 \left| \dfrac{1}{N} \int_0^N \tilde \psi (t) dt
 + \dfrac{I_0}{2 a c_*} \dfrac{(N - t_*)^2 + t_*^2}{N^2}
 \right| \le C \e,
 \]
 from which 
 \[
 \dfrac{1}{N} \int_0^N \tilde \psi (t) dt < 0
 \]
 so that $\tilde I (c) \ge 1$.
 Similarly, we easily prove (\ref{eq3_Ic_Lipschitz}).

 If $N$ is sufficiently large, we have  
 \[
 \left| \dfrac{1}{N} \int_0^N u_0 (t) dt - u_* \right| 
 \le C \dfrac{R}{N} \le C \e^2
 \]
 by (\ref{eq_u_infty}), (\ref{eq3_up1}) and (\ref{eq3_up2}).
 From $\| \phi_{0i} \|_{\r} \le C \e$ and $\| \phi_{0d} \|_{\r} \le C \e$, we see 
 \[
 \left| \dfrac{1}{N} \int_{-2R}^{2R} \phi_{0i} (t) \chi_R (t) dt \right| \le C \e^2,
 \quad \left| \dfrac{1}{N} \int_{t_* - 2R}^{t_* + 2R}
 \phi_{0d} (t - t_*) \chi_R (t - t_*) dt \right| \le C \e^2,
 \]
 while 
 \[
 \left| \dfrac{1}{N} \int_0^N \phi (t) dt \right| \le C \e^{3/2}
 \]
 by $\| \phi \| \le \e$.
 On the other hand, we see
 \[
 \left| \dfrac{1}{N} \int_0^N \psi (t) dt
 - \left( \dfrac{a \eta_*}{c_*^2} c + \eta \right) \dfrac{N - 2 t_*}{N} \right|
 \le C \e^2.
 \]
 Hence we conclude (\ref{eq3_IU}).
 In the same way as above, we obtain (\ref{eq3_IU_Lipschitz}).
\end{proof}


In the same way as in Section~\ref{sec_Heteroclinic},
we substitute $u (t) = \tilde u (t)$ into (\ref{eq_u_periodic})
and derive a linear equation, given by 
\begin{equation}\label{eq3_phi_modify}
 \bar \phi'' (t) + \k \bar \phi' (t) = F (t; U),
  \quad t \in I_1 \cup I_2.
\end{equation}
Here the function $F$ in the right-hand side is decomposed into two parts as
\[
F (t; U) \equiv F_1 (t; U) + \dfrac{a}{(c_* + c)^2} F_2 (t; U),
\]
where
\[
\begin{aligned}
 F_1 (t; U) 
 & \equiv - \left( \dfrac{a}{c_* + c} - \dfrac{a}{c_*} \right) \phi' (t) \\
 & - \left( 1 - \dfrac{c_*^2}{(c_* + c)^2} \right) 
 ( \phi_{0i}'' (t) \chi_R (t) + \phi_{0d}'' (t - t_*) \chi_R (t - t_*) )\\
 &  - \dfrac{a c}{(c_* + c)^2}
 (\phi_{0i}' (t) \chi_R (t) + \phi_{0d}' (t - t_*) \chi_R (t - t_*)) \\
\end{aligned}
\]
and
\[
\begin{aligned}
 F_2 (t; U) 
 & \equiv V (\tilde u (t + 1)) - V (\tilde u (t))
 - V_0 (H (u_i (t + 1) - l) - H (u_i (t) - l))) \chi_R (t) \\
 & - V_0 (H (u_d (t - t_* + 1) - l) - H (u_d (t - t_*) - l))) \chi_R (t - t_*)).
\end{aligned}
\]
Similarly, we substitute $\tilde u$ into (\ref{eq3_C1_matching}), obtaining
\begin{equation}\label{eq3_C1_matching_phi}
 \left\{
 \begin{aligned}
  & M_{11} \bar c + M_{12} \bar \eta 
  + \bar \phi' (+ 0) - \bar \phi' (N - 0) = G_1 (U), \\
  & M_{21} \bar c + M_{22} \bar \eta
  + \bar \phi' (t_* + 0) - \bar \phi' (t_* - 0) = G_2 (U),
 \end{aligned}
 \right.
\end{equation}
where $M_{ij}$ and $G_i (U)$ for $i, j = 1, 2$ are given by 
\[
\begin{aligned}
 & M_{11} 
 \equiv \dfrac{a V_0}{c_*^3} \left( 1 - \k - \dfrac{I_0}{2 \tilde I (0) c_*^2} 
 \dfrac{N - 2 t_*}{N} \dfrac{t_*}{N} \right), \\
 & M_{21} \equiv
 - \dfrac{a V_0}{c_*^3} \left( 1 - \k + \dfrac{I_0}{2 \tilde I (0) c_*^2} 
 \dfrac{N - 2 t_*}{N} \dfrac{N - t_*}{N} \right), \\
 & M_{12} \equiv - \dfrac{I_0}{\tilde I (0) c_*^2} 
 \dfrac{N - 2 t_*}{N} \dfrac{t_*}{N},
 \quad M_{22} \equiv - \dfrac{I_0}{\tilde I (0) c_*^2} 
 \dfrac{N - 2 t_*}{N} \dfrac{N - t_*}{N},
\end{aligned}
\]
\[
\begin{aligned}
 G_1 (U) & \equiv 
 - \left( \nu_1 + \dfrac{a^2 \eta_*}{c_*^3} c + \dfrac{a}{c_*} \eta \right)
 - I (U) \left( \tilde \nu_1 + \dfrac{I_0}{c_*^2} \dfrac{t_*}{N} \right) \\
 & + \left( I (U) - \dfrac{1}{\tilde I (0)} 
 \left( \dfrac{a \eta_*}{c_*^2} c + \eta \right) \dfrac{N - 2 t_*}{N} \right)
 \dfrac{I_0}{c_*^2} \dfrac{t_*}{N}, 
\end{aligned}
\]
and 
\[
\begin{aligned}
 G_2 (U) & \equiv 
 - \left( \nu_2 - \dfrac{a^2 \eta_*}{c_*^3} c - \dfrac{a}{c_*} \eta \right)
 - I (U) \left( \tilde \nu_2 + \dfrac{I_0}{c_*^2} \dfrac{N - t_*}{N} \right) \\
 & + \left( I (U) - \dfrac{1}{\tilde I (0)}
 \left( c \dfrac{a \eta_*}{c_*^2} + \eta \right) \dfrac{N - 2 t_*}{N} \right)
 \dfrac{I_0}{c_*^2} \dfrac{N - t_*}{N}. 
\end{aligned}
\]
We compute the determinant of a matrix $M \equiv (M_{ij})$ for $i, j = 1, 2$ as
\begin{equation}\label{eq3_detM}
 \begin{aligned}
  \det M 
  & = - \dfrac{a V_0}{c_*^3} \left( 1 - \k - \dfrac{I_0}{2 \tilde I (0) c_*^2} 
  \dfrac{N - 2 t_*}{N} \dfrac{t_*}{N} \right)
  \dfrac{I_0}{\tilde I (0) c_*^2} \dfrac{N - 2 t_*}{N} \dfrac{N - t_*}{N} \\
  & - \dfrac{a V_0}{c_*^3} \left( 1 - \k + \dfrac{I_0}{2 \tilde I (0) c_*^2} 
  \dfrac{N - 2 t_*}{N} \dfrac{N - t_*}{N} \right) 
  \dfrac{I_0}{\tilde I (0) c_*^2} \dfrac{N - 2 t_*}{N} \dfrac{t_*}{N} \\
  & = - \dfrac{a V_0}{c_*^3} ( 1 - \k ) 
  \dfrac{I_0}{\tilde I (0) c_*^2} \dfrac{N - 2 t_*}{N} \neq 0.
 \end{aligned} 
\end{equation}

We think of (\ref{eq3_phi_modify}) and (\ref{eq3_C1_matching_phi})
as a linear system with respect to $\bar U = (\bar \phi, \bar c, \bar \eta)$.
To begin with, we focus on the left-hand sides of 
(\ref{eq3_phi_modify}) and (\ref{eq3_C1_matching_phi}), 
and study a linear problem given by 
\begin{equation}\label{eq3_L}
 \left\{
  \begin{aligned}
   & \phi'' (t) + \k \phi' (t) = f, 
   \quad t \in I_1 \cup I_2, \\
   & M_{11} c + M_{12} \eta + \phi' (+ 0) - \phi' (N - 0) = g_1, \\
   & M_{21} c + M_{22} \eta + \phi' (t_* + 0) - \phi' (t_* - 0) = g_2,
  \end{aligned}
 \right.
\end{equation}
where we omit the bars above the letters in the expressions for simplicity.
By the same arguments as in the previous section, 
we find a solution $U \in W_N$ for $h = (f, g_1, g_2) \in W_N'$, 
where $W_N'$ is defined by 
\[
W_N' \equiv (C ([0, t_* - 1]) \cap C ([t_* - 1, t_*]) \cap C ([t_*, N - 1]) 
\cap C ([N - 1 , N])) \times \r^2.
\]
By the same argument as in Lemma~\ref{lemma4_linearsol}, 
we can prove the following lemma.
So we omit the details of the proof.


\begin{lemma}\label{lemma_linearsol_per}
 For any $h = (f, g_1, g_2) \in W_N'$, 
 there exists a unique solution $U = (\phi, c, \eta) \in W_N$ 
 of (\ref{eq3_L}) such that $\| \phi \| \le C \| f \tilde \varphi \|_S / \sqrt{\e}$
 and $|c| + |\eta| \le C (\| f \tilde \varphi \|_S + |g_1| + |g_2|)$
 for some $C > 0$ independent of $\b, \e, N$ and $R$.
\end{lemma}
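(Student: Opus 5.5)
We construct the solution by hand, in the same way as in Lemma~\ref{lemma4_linearsol}, and then read off the estimates; the only new points are the periodicity and the weight $\tilde\varphi$ on $S$.

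\textbf{Construction.} On each interval $I_j$ we use the variation-of-constants formula with $\varphi_\pm(t)=e^{\pm\k t}$. On $I_1=(0,t_*)$ we set
\[
\phi_1(t)=A_1(1-\varphi_-(t))-\varphi_-(t)\int_0^t\frac{1}{\k}\varphi_+(s)f(s)\,ds+\int_0^t\frac{1}{\k}f(s)\,ds ,
\]
so that $\phi_1(0)=0$, and we fix $A_1$ by requiring $\phi_1(t_*)=0$. On $I_2=(t_*,N)$ we use the same formula with $t$ replaced by $t-t_*$, which gives $\phi_2(t_*)=0$, and fix $A_2$ by requiring $\phi_2(N)=0$. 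The constant $A_j$ enters with coefficient $1-e^{-\k t_*}$ (respectively $1-e^{-\k(N-t_*)}$). By (A$_2$) and $N$ large, both coefficients lie in $[1/2,1]$, so $A_1,A_2$ are uniquely determined. The resulting $\phi$ is continuous on $S$ and satisfies $\phi(0)=\phi(t_*)=0$.

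With $\phi$ fixed, the two matching conditions in (\ref{eq3_L}) become a $2\times2$ linear system $M(c,\eta)^T=(g_1-\phi'(+0)+\phi'(N-0),\,g_2-\phi'(t_*+0)+\phi'(t_*-0))^T$. By (\ref{eq3_detM}) we have $\det M\neq0$. Moreover, $|\det M|$ is bounded below independently of $N$, since by (A$_2$) and $t_*\ne N/2$ the ratio $|N-2t_*|/N$ stays away from $0$; we read this as part of the standing assumption. Also $1\le\tilde I(0)\le C$ by Lemma~\ref{lemma3_IU}, and the entries of $M$ are bounded. Hence $|c|+|\eta|\le C(|g_1|+|g_2|+|\phi'(\pm)|)$. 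Uniqueness follows because the homogeneous problem forces $A_j=0$, so $\phi\equiv0$, and then $c=\eta=0$ by invertibility of $M$.

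\textbf{Estimates.} Everything reduces to showing $\|\phi'\tilde\varphi\|_{I_j}\le C\|f\tilde\varphi\|_S$ with $C$ independent of $N$. The factor $1/\sqrt{\e}$ in $\|\cdot\|$ then gives the stated bound on $\|\phi\|$, and the endpoint values of $\phi'$ are controlled by the same bound, since $\tilde\varphi=1$ there.

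We differentiate:
\[
\phi_1'(t)=\k A_1\varphi_-(t)-\varphi_-(t)\int_0^t\varphi_+(s)f(s)\,ds .
\]
Write $|f(s)|\le\|f\tilde\varphi\|_S/\tilde\varphi(s)$, with $1/\tilde\varphi(s)=\max\{e^{-\k s/2},e^{-\k(t_*-s)/2},1/N\}$ on $I_1$. The convolution with $e^{-\k(t-s)}$ then gives
\[
\Bigl|\varphi_-(t)\int_0^t\varphi_+ f\Bigr|\le C\|f\tilde\varphi\|_S\max\{e^{-\k t/2},e^{-\k(t_*-t)/2},1/N\}=C\|f\tilde\varphi\|_S/\tilde\varphi(t).
\]
This is exactly the computation in the proof of Lemma~\ref{lemma4_estimate_barsol}.

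For $A_1$, the condition $\phi_1(t_*)=0$ gives
\[
A_1(1-e^{-\k t_*})=\varphi_-(t_*)\int_0^{t_*}\frac{1}{\k}\varphi_+ f-\int_0^{t_*}\frac{1}{\k}f .
\]
The first term is bounded by $C\|f\tilde\varphi\|_S$. The second is bounded by $\int_0^{t_*}|f|\le C\|f\tilde\varphi\|_S(1+t_*/N)\le C\|f\tilde\varphi\|_S$, which is where the cap $N$ in $\tilde\varphi$ is essential. Since $\varphi_-(t)\tilde\varphi(t)\le e^{-\k t/2}\le1$, it follows that $\tilde\varphi|\k A_1\varphi_-|\le C\|f\tilde\varphi\|_S$. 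The interval $I_2$ is handled identically.

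\textbf{Main obstacle.} The hard step is the uniformity in $N$. It enters in two places: the $L^1$ bound of $f$ over intervals of length $O(N)$, handled by the $1/N$ floor of $1/\tilde\varphi$ as above; and the lower bound on $|\det M|$, which relies on (A$_2$) and $t_*\neq N/2$ in the sense that $|N-2t_*|/N$ is bounded away from $0$. If that separation is not assumed uniformly, the constant in $|c|+|\eta|$ would depend on it, and I would state the lemma under that proviso.
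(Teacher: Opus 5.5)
Your proof is correct and is the argument the paper intends; the paper omits the proof and refers to Lemma~\ref{lemma4_linearsol}. It consists of variation of constants on $I_1$ and $I_2$ with the free constants fixed by $\phi(t_*)=\phi(N)=0$, the weighted convolution bound that uses the $1/N$ floor of $1/\tilde\varphi$, and inversion of $M$ via (\ref{eq3_detM}). The integral term in your $\phi_1'$ should carry a $+$ sign, which is immaterial for the estimates.

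Your caveat is well founded and is tacit in the paper. The $c$-row of $M^{-1}$ is bounded uniformly, but the $\eta$-row is of size $N/|N-2t_*|$. So an $N$-independent constant genuinely requires $|N-2t_*|/N$ to be bounded away from zero, and $t_*\neq N/2$ together with (A$_2$) does not guarantee this.
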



Lemma~\ref{lemma_linearsol_per} leads to the existence of a solution $\bar U$ 
in (\ref{eq3_phi_modify}) and (\ref{eq3_C1_matching_phi})
such that $\bar U \in W_N$, which implies that we can define a map $T$
by $\bar U = T (U)$.
Actually, we can show that $T$ is a contraction mapping on $W_N^\e$,
which implies that $T$ has a fixed point $U \in W_N^\e$, 
and leads to Theorem~\ref{thm_3}.
The following two lemmas can be shown in the same ways 
as in Lemmas~\ref{lemma4_estimate_barsol} and \ref{lemma4_Lipschitz_T_homoclinic}.
We omit the details of the proofs.


\begin{lemma}\label{lemma_estimate_barsol}
 If $U \in W_N^\e$, then there exists a unique solution $\bar U \in W_N^\e$ 
 in (\ref{eq3_phi_modify}) and (\ref{eq3_C1_matching_phi}).
\end{lemma}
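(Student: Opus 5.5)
We follow the proof of Lemma~\ref{lemma4_estimate_barsol}, splitting the data into three pieces. For $U=(\phi,c,\eta)\in W_N^\e$ we set $h_1=(F_1(t;U),0,0)$, $h_2=(a(c_*+c)^{-2}F_2(t;U),0,0)$ and $h_3=(0,G_1(U),G_2(U))$. We first check that each $h_k\in W_N'$, using (\ref{eq2_F2_0}) for $F_2$. Lemma~\ref{lemma_linearsol_per} then gives unique solutions $\bar U_k$ of (\ref{eq3_L}), and $\bar U=\bar U_1+\bar U_2+\bar U_3$ is the unique solution of (\ref{eq3_phi_modify}) and (\ref{eq3_C1_matching_phi}). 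It remains to show $\|\bar U\|_{W_N}\le\e$. The norm of $W_N$ carries the factor $1/\sqrt{\e}$ on $\phi$, so we need
\[
\|f\tilde\varphi\|_S\le C\e^{3/2}\quad\text{for the }\phi\text{-part},
\qquad
\|f\tilde\varphi\|_S+|g_1|+|g_2|\le C\e^{3/2}\quad\text{for the }(c,\eta)\text{-part},
\]
with $C$ independent of $\b,\e,N,R$; then $\|\bar U\|_{W_N}\le C\e^{3/2}\le\e$ for small $\e$.

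\textbf{The term $F_1$.} It involves $\phi'$ multiplied by $a/(c_*+c)-a/c_*=O(\e)$. Since $\|\phi'\tilde\varphi\|\le\sqrt{\e}\,\|\phi\|\le\e^{3/2}$, this contributes $O(\e^{5/2})$. The remaining terms are $O(\e)\cdot(\phi_{0i},\phi_{0d})$, which are $O(\e^2)$ after weighting by $\tilde\varphi$, because $\phi_0$ decays like $e^{-\k|t|}$ and $\chi_R$ localizes near $0$ and $t_*$. Hence $\|F_1\tilde\varphi\|_S\le C\e^2$.

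\textbf{The term $F_2$.} We repeat the interval-by-interval estimates of Lemmas~\ref{lemma_estimate_T} and \ref{lemma4_estimate_barsol} near each layer $t=0$ and $t=t_*$, and also near $t=N$, which is $t=0$ by periodicity. On $\b^{-3/4}$-neighbourhoods of the points where $\tilde u(t)$ or $\tilde u(t+1)$ crosses $l$ we use only boundedness of $V$, and the mean value theorem together with (A$_1$) makes the contribution small elsewhere. Away from the layers, $\tilde u-l$ stays of size $\rho$, and $\tilde\varphi\int_t^{t+1}|\tilde u'|\le C\sigma(t)$ by Lemma~\ref{lemma_psi}, $|I(U)|\le C\e$, and $\tilde\varphi|\tilde\psi'|\le C$. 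This gives $C\b e^{-\a\b\rho}$ bounds, so $\|\bar U_2\|\le C\b^{-3/4}$ for $\b$ large depending on $\e,R$.

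\textbf{The term $G$.} We show $|G_j(U)|\le C\e^{3/2}$. The $\nu$-bracket is $O(\e^2)$ by the expansions of $\nu_1,\nu_2$ stated before Lemma~\ref{lemma3_IU}. The term $I(U)(\tilde\nu_j+\cdots)$ is $O(\e)\cdot O(\e)$. The last bracket is $O(\e^{3/2})$ by (\ref{eq3_IU}). Since $\det M$ in (\ref{eq3_detM}) is nonzero and bounded below independently of $N$ by (A$_2$) and $t_*\neq N/2$, Lemma~\ref{lemma_linearsol_per} gives $|\bar c_3|+|\bar\eta_3|\le C\e^{3/2}$, with $\bar\phi_3\equiv0$.

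The main obstacle is this last step. The $\e^{3/2}$ error in $I(U)$ is exactly at the threshold, which is why the $1/\sqrt{\e}$ weighting was introduced. We must therefore verify carefully that the linear parts of $G_j$ cancel, that is, that the $O(\e)$ terms of $I(U)$ have been absorbed into $M_{ij}$. We must also check that the constant in $\det M^{-1}$ does not depend on $N$. For the latter we note that $(N-2t_*)/N$ is bounded away from zero only if $t_*$ stays uniformly away from $N/2$. If (A$_2$) alone does not give this, we would strengthen the hypothesis to $|t_*/N-1/2|\ge\delta$ with $\delta$ fixed.
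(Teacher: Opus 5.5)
Your proposal is correct and follows the paper's route. The paper omits this proof with a reference to Lemma~\ref{lemma4_estimate_barsol}, and you reproduce that argument: the splitting into $h_1,h_2,h_3$ solved via Lemma~\ref{lemma_linearsol_per}, the bound $O(\e^2)$ for $F_1$, the interval-by-interval estimates for $F_2$, and $|G_j(U)|\le C\e^{3/2}$ from the expansions of $\nu_j,\tilde\nu_j$ and (\ref{eq3_IU}). For $F_2$ the $O(\b^{-3/4})$ bound has to come from the explicit solution formulas rather than the sup-norm estimate, because $F_2$ is $O(1)$ on the $\b^{-3/4}$-neighbourhoods of the layers.

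Two small points. First, your stated criterion is off by $\sqrt{\e}$: because of the factor $1/\sqrt{\e}$ in $\|\cdot\|$, data with $\|f\tilde\varphi\|_S\le C\e^{3/2}$ would only give $\|\bar\phi\|\le C\e$. The bounds you actually prove are strong enough, giving $\|\bar U\|_{W_N}\le C\e^{3/2}+C\b^{-3/4}\e^{-1/2}\le\e$. Second, your caveat about $\det M$ is justified: (A$_2$) together with $t_*\neq N/2$ does not by itself keep $(N-2t_*)/N$ away from zero uniformly in $N$.
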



\begin{lemma}\label{lemma_Lipschitz_T_periodic}
 Suppose that $\e > 0$ is small and $R$ is large.
 If $\b$ and $N$ are sufficiently large depending on $\e, R$, 
 then there exists $C > 0$ independent of $\b, \e, N$ and $R$ such that 
 \[
 \| T (U_p) - T (U_q) \|_{W_N} \le C \sqrt{\e} \| U_p - U_q \|_{W_N}
 \]
 in $U_p, U_q \in W_N^\e$.
 In particular, $T$ is a contraction mapping and has a fixed point $U \in W_N^\e$.
\end{lemma}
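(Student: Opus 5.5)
Following Lemma~\ref{lemma4_Lipschitz_T_homoclinic}, I will write $T(U_r)=\bar U_{r1}+\bar U_{r2}+\bar U_{r3}$ for $r=p,q$. Here $\bar U_{rk}$ is the solution of (\ref{eq3_L}) given by Lemma~\ref{lemma_linearsol_per} with data $h_{r1}=(F_1(t;U_r),0,0)$, $h_{r2}=(a(c_*+c_r)^{-2}F_2(t;U_r),0,0)$ and $h_{r3}=(0,G_1(U_r),G_2(U_r))$. By linearity, each difference $\bar U_{pk}-\bar U_{qk}$ solves (\ref{eq3_L}) with data $h_{pk}-h_{qk}$. Lemma~\ref{lemma_linearsol_per} then gives
\[
\|\bar U_{pk}-\bar U_{qk}\|_{W_N}\le \frac{C}{\sqrt{\e}}\|(f_{pk}-f_{qk})\tilde\varphi\|_S+C\bigl(|g_{1p}-g_{1q}|+|g_{2p}-g_{2q}|\bigr).
\]
The task therefore reduces to showing $\|(f_{pk}-f_{qk})\tilde\varphi\|_S\le C\e\|U_p-U_q\|_{W_N}$, so that the factor $1/\sqrt\e$ leaves $C\sqrt\e$, and showing $|G_j(U_p)-G_j(U_q)|\le C\sqrt\e\|U_p-U_q\|_{W_N}$.

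First I will establish the periodic analogue of (\ref{eq4_Lipschitz_ur}) and (\ref{eq4_Lipschitz_dur}). The main bound is
\[
|\tilde u_p(t)-\tilde u_q(t)|\le C\min\{|t|,|t-t_*|,|t-N|,1\}\|U_p-U_q\|_{W_N},
\]
together with the weighted bound on $\int_t^{t+1}|\tilde u_p'-\tilde u_q'|$. I will derive both from $\tilde u_r(0)=\tilde u_r(t_*)=l$, Lemma~\ref{lemma_psi}, the Lipschitz bounds on $\tilde\psi(\cdot;c)$, and the estimate $|I(U_p)-I(U_q)|\le C\|U_p-U_q\|_{W_N}$. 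That last estimate comes from (\ref{eq3_IU_Lipschitz}) and (\ref{eq3_Ic_Lipschitz}). The $\phi$-component enters through $\|\phi\|_S\le C\sqrt\e\|\phi\|$, which is why the weighted norm helps here.

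For $F_1$, every term is a product of $c$ or $\phi'$ with a smooth bounded factor of size $O(\e)$ or $O(1)$. Writing the difference as (coefficient difference)$\times$(quantity) plus (coefficient)$\times$(quantity difference) gives the $C\e$ bound directly. For $\phi'$ I will use $\tilde\varphi|\phi'|\le\sqrt\e\|\phi\|$.

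For $F_2$, I will repeat the case analysis of Lemma~\ref{lemma4_Lipschitz_T_homoclinic} on each of the four windows near $t=-1,0,t_*-1,t_*$ (and $N-1$ after periodicity). Away from these windows, the $\rho$-separation of $\bar x$ from $l$ and the bound $|V''|\le C\b^2e^{-\a\b\rho}$ give a factor that is exponentially small in $\b$. Inside the windows, I will split off an interval of length $\b^{-3/4}$ where $|V'|\le C\b$, use the $\min\{|t|,\dots\}$ bound there, and obtain $C\b^{-1/2}$. Choosing $\b$ large relative to $\e$ makes this at most $C\e$. Care is needed on the long plateaus of length $O(N)$: the weight $\tilde\varphi$ is capped at $N$, so the exponentially small factor must absorb a factor $N$. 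This is why $\b$ is taken large depending on $N$-independent quantities only through $e^{-\a\b\rho}\b N\le\b^{-3/4}$. I note this as a point to check, and if needed I will instead exploit $\sigma(t)$ as in Lemma~\ref{lemma4_estimate_barsol}.

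The main obstacle is $G_j$, where the $I(U)$ terms appear. I will expand $G_1(U_p)-G_1(U_q)$ term by term. The $\nu_1$ part is $C\e\|U_p-U_q\|$ by the stated Lipschitz bound. The term $I(U)(\tilde\nu_1+I_0t_*/(c_*^2N))$ is handled by the product rule: $|I|\le C\e$ and the bracket is $O(\e)$ and Lipschitz in $c$, giving $C\e$. The last term is controlled by (\ref{eq3_IU_Lipschitz}) by $C\sqrt\e\|U_p-U_q\|$. This last term is the worst one, and it dictates the final contraction constant $C\sqrt\e$. The $G_2$ case is analogous. Summing the three contributions gives the claimed Lipschitz bound. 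For $\e$ small, $C\sqrt\e<1$, and combined with Lemma~\ref{lemma_estimate_barsol} the Banach fixed point theorem on the closed ball $W_N^\e$ yields the fixed point.
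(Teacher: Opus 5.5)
Your architecture matches what the paper intends when it refers this lemma to Lemma~\ref{lemma4_Lipschitz_T_homoclinic}: you split $T(U_r)$ into the parts driven by $F_1$, $F_2$ and $(G_1,G_2)$, treat $F_1$ with the weighted sup estimate, and identify the $I(U)$-term in $G_j$, via (\ref{eq3_IU_Lipschitz}), as what limits the constant to $C\sqrt\e$. The gap is your reduction for $k=2$. You claim that, through the sup-norm bound of Lemma~\ref{lemma_linearsol_per}, everything reduces to $\|(f_{p2}-f_{q2})\tilde\varphi\|_S\le C\e\|U_p-U_q\|_{W_N}$. That inequality is false.

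Near $t=-1,0,t_*-1,t_*$, $\tilde u(t+1)$ or $\tilde u(t)$ crosses $l$. There, a term such as $V'(\bar x_1)\,(\tilde u_p(t+1)-\tilde u_q(t+1))$ has pointwise size up to $C\b\cdot\b^{-3/4}\|U_p-U_q\|_{W_N}$ by your own bounds. Even the sharper estimate $|V'(x)|\,|x-l|\le C$ from (A$_1$) only brings it down to order $\|U_p-U_q\|_{W_N}$ at distance $\sim\b^{-1}$ from the crossing. There is no factor $\e$, and no cancellation, because in each window only one of the two terms is non-negligible. Likewise $F_2(t;U)$ itself is $O(1)$ on these windows. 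Only the \emph{integral} over a window is small. Your ``$C\b^{-1/2}$'' is an $L^1$ bound, and it cannot be fed into an estimate that sees only $\sup_S\tilde\varphi|f|$.

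The missing step is to bypass Lemma~\ref{lemma_linearsol_per} for the $F_2$-part, as the paper does in Lemmas~\ref{lemma4_estimate_barsol} and \ref{lemma4_Lipschitz_T_homoclinic}.
\begin{itemize}
\item Write $\bar\phi_{p2}-\bar\phi_{q2}$ on $I_1$ and $I_2$ using the periodic analogues of (\ref{eq4_phi2}).
\item Note that the following depend on $f$ only through integrals $\varphi_-(t)\int\varphi_+(s)f(s)\,ds$ and $\int f\,ds$: the derivative $\bar\phi'$, the constants $A$ fixed by the zero conditions at $t_*$ and $N$, and the two jumps at $0$ and $t_*$ that determine $(\bar c,\bar\eta)$ through $M^{-1}$.
\item Bound these integrals by your window-split estimates. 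This gives $C\b^{-1/2}\|U_p-U_q\|_{W_N}$ both for $\tilde\varphi|\bar\phi_{p2}'-\bar\phi_{q2}'|$ and for $|\bar c_{p2}-\bar c_{q2}|+|\bar\eta_{p2}-\bar\eta_{q2}|$.
\item Taking $\b^{-1/2}\le\e$ then yields $C\sqrt\e$ after the factor $1/\sqrt\e$ in $\|\cdot\|$.
\end{itemize}

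On the long plateaus, use your fallback rather than the condition $e^{-\a\b\rho}\b N\le\b^{-3/4}$. That condition would make $\b$ depend on $N$, losing the independence stated in the Remark after Theorem~\ref{thm_3}. Instead use $\tilde\varphi(t)\int_t^{t+1}|\tilde u_r'|\le C\sigma(t)$, which follows from Lemma~\ref{lemma_psi}, $\tilde\varphi|\tilde\psi'|\le C$ and $|I(U)|\le C\e$. The weight is then absorbed by the decay of $\tilde u'$, and you only need $\b e^{-\a\b\rho}e^{\k R}\le\b^{-3/4}$, i.e.\ $\b$ large depending on $R$.
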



\begin{figure}[h]
 \begin{center}
  \begin{tabular}{cc} 
   \includegraphics[width=0.45\hsize]{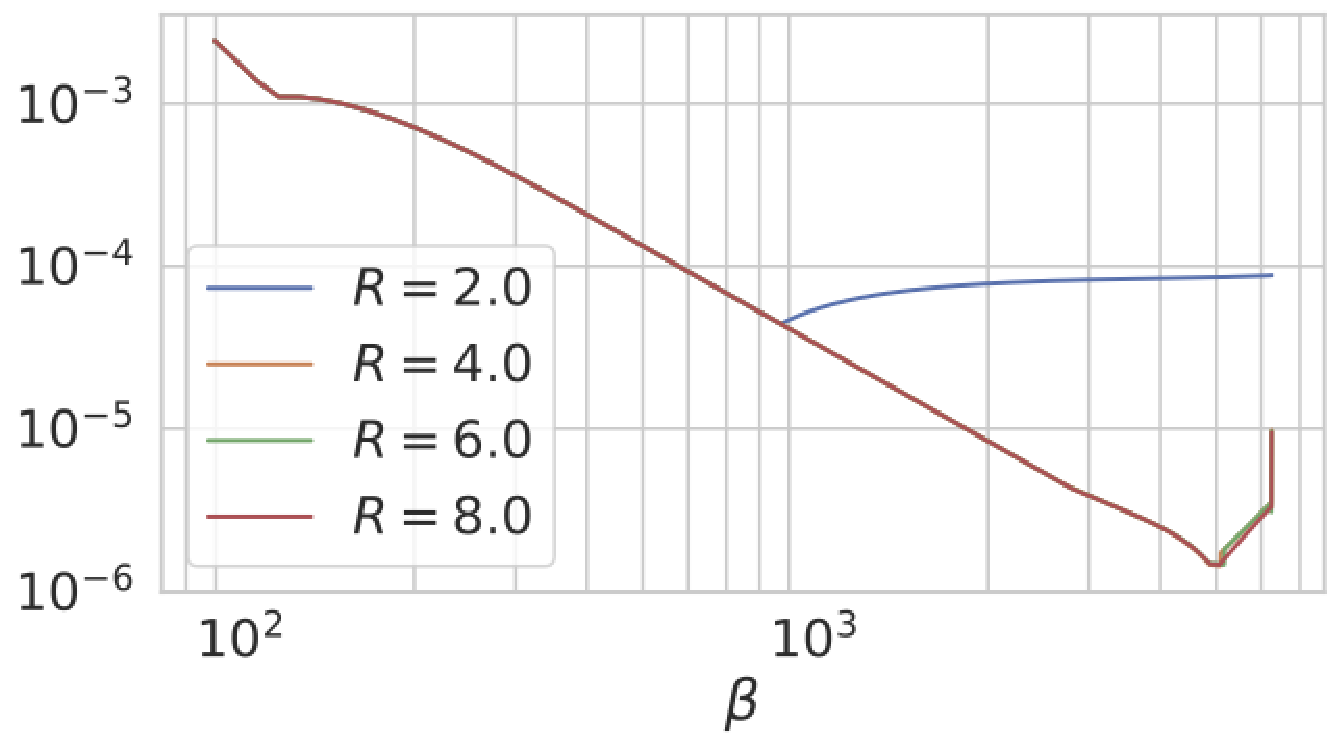} & 
   \includegraphics[width=0.45\hsize]{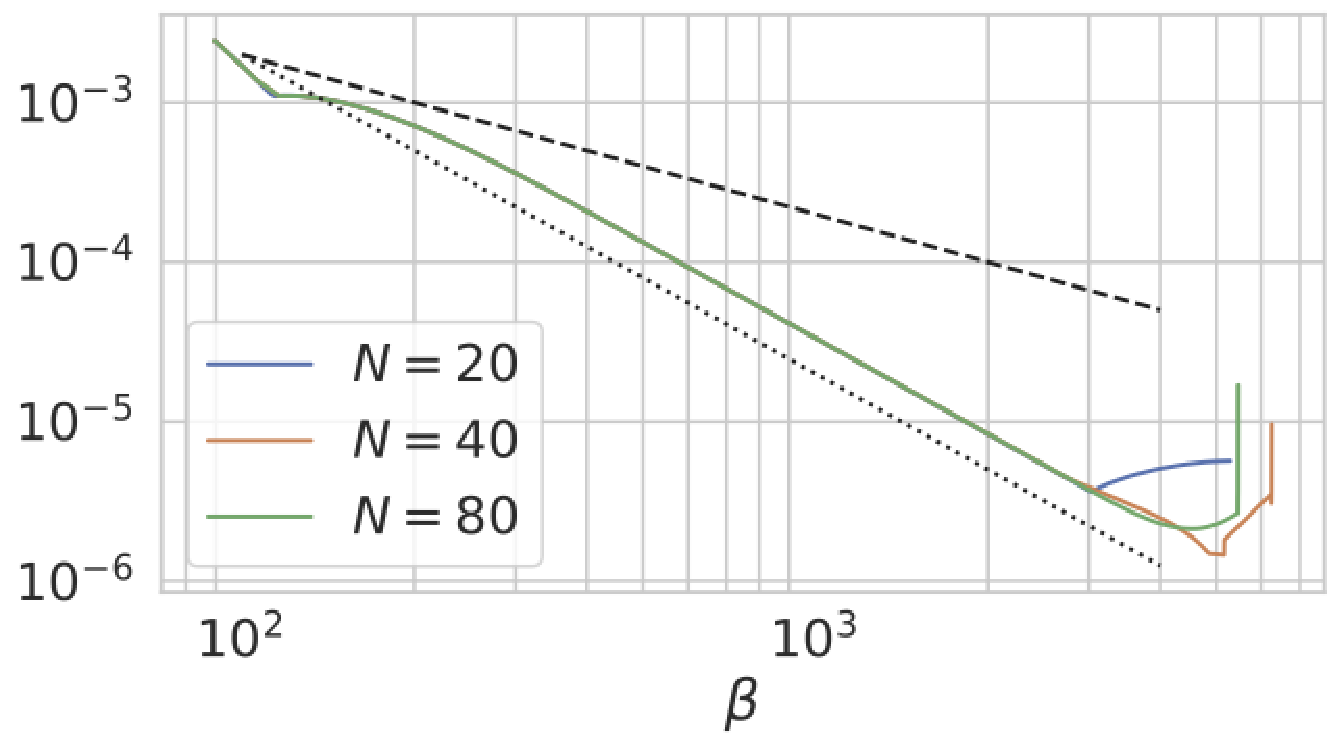} \\
   (a) & (b)
  \end{tabular}
  \caption{
  Approximation errors as functions of $\beta$ for several values of $R$ and $N$.
  (a) $R = 2, 4, 6, 8$ with $(N, L) = (40, 1.0)$, and 
  (b) $N = 20, 40, 80$ with $R = 3 N / 20$. 
  Dashed and dotted lines in (b) represent $O(\beta^{-1})$ and $O(\beta^{-2})$ slopes, 
  respectively.
  }
  \label{fig:errors}
 \end{center}
\end{figure}


Finally, we conducted a numerical comparison between the periodic traveling 
wave solution of (\ref{eq_u}), and the function $u_0$ defined 
by (\ref{eq3_up1}) and (\ref{eq3_up2}).
For the OV function in (\ref{eq_OVfunction_tanh}), 
we used the same parameter values in Fig.~\ref{fig_periodic}, 
namely $V_0 = 0.0336$, $l = 0.025$, and $M = 0.913$ (\cite{bando1995phenomenological}),
whereas $\b$ varied from $2/0.0223$ to $6\times 10^3$ as a parameter.
We selected $(N, L) = (20, 0.5), (40, 1.0)$, and $(80, 2.0)$
such that the two transition layers are separated by approximately half the domain.
We approximated the solution to (\ref{eq_u}) using the Fourier spectral method 
\cite{boyd2001chebyshev},
where the nonlinear terms were evaluated pseudospectrally, with zero-padding 
(doubling the number of modes) to reduce aliasing errors.
We applied the predictor-corrector method (\cite{kuznetsov2023numerical}) 
with an integral phase condition for numerical continuation
of periodic solutions of (\ref{eq_u}) with respect to $\beta$.
The approximation error was measured using the maximum norm in the domain.

Fig.~\ref{fig:errors} (a) presents the dependence of the error on $R$ 
for $(N, L) = (40, 1.0)$.
The plots for $R=4, 6$, and $8$ are nearly identical, 
whereas that for $R=2$ separates near $\beta = 10^3$.
This is because $R$ must be sufficiently large so that the transition layers 
are appropriately approximated.
The approximation error decreases monotonically with $\beta$. 
However, it increases for $\beta > 3\times 10^3$.
This increase in error is caused by the truncation error.
Since the profile tends to resemble a square wave as $\beta$ increases, 
additional Fourier modes are needed to resolve this problem.
The results for $(N, L)=(20, 0.5)$ and $(80, 2.0)$ were similar.

Fig.~\ref{fig:errors} (b) shows the numerical results 
for $N=20, 40$, and $80$ with $R = 3N / 20$.
In this figure, the dashed and dotted lines are proportional to $\beta^{-1}$ 
and $\beta^{-2}$, respectively.
This result suggests that the approximation error is $O(\beta^{-p})$ when $1<p \le 2$.


\section{Discussion}

In this study, we established the existence of heteroclinic,
homoclinic, and periodic solutions of the difference-differential 
equation (\ref{eq_u}), derived from the OV model (\ref{eq_OV}).
These results provide a mathematically rigorous foundation for complex
solution structures that were observed numerically in microscopic
traffic flow models.

Our analysis is based on a perturbation approach around the singular limiting
profiles associated with the step function limit of the OV function.
Although the approximate solution introduced by \cite{sugiyama2023dynamics} 
plays a crucial role as a leading-order description, 
it is not sufficient on its own; further insight is needed to complete the proofs.
A key step in our analysis is the detailed investigation of the linearized
equations for approximate solutions.
In particular, the careful construction and estimation of the outer solutions
$\psi$ and $\tilde \psi$ in Sections~\ref{sec_homoclinic} and \ref{sec_periodic}
are essential for controlling the error terms in an iterative scheme.
This analysis constitutes a central technical contribution of this study.

To construct the periodic solutions, we introduced the modified equation
(\ref{eq_u_periodic}) by imposing the constraint (\ref{eq_const}), 
which reflects the intrinsic properties of (\ref{eq_OV}), 
namely the conservation of total density over time.
However, constructing periodic solutions using perturbation methods without 
such constraints is difficult owing to the non-invertibility 
of the associated linearized operator (see (\ref{eq3_detM})).
Such difficulties are closely related to the phenomena observed 
in the reaction-diffusion equations, most notably the Allen--Cahn equation,
with small diffusion coefficients \cite{bronsard1997volume}.
By incorporating the constraint, we successfully constructed 
the periodic solutions whose structure comprised well-separated transition layers.

By contrast, we demonstrated that for an arbitrarily prescribed separation $t_*$, 
homoclinic solutions satisfying (\ref{eq_formalsol})
and (\ref{eq_formalsol2}) can be constructed without imposing any additional constraint.
Such degrees of freedom do not occur in the Allen--Cahn equation,
highlighting the fundamental difference between difference-differential equations
arising from microscopic traffic models and classical reaction-diffusion equations.
Through the explicit construction of the characteristic solutions,
both the similarities and essential differences between the two classes of
the equations become apparent.

Although this study focuses on a specific difference-differential equation,
naturally derived from the OV model, the analytical framework developed here
is expected to be applicable to more general self-driven particle systems
with nearest-neighbor interactions.
In particular, extending the proposed approach to a broader class of
microscopic traffic models remains an important direction for future research.
Overall, the results obtained in this paper deepen the theoretical understanding
of the mathematical structure underlying the difference-differential equations
in traffic flow modeling.
They provide a rigorous basis for further analytical
studies that involve complex spatiotemporal patterns in vehicular dynamics.

\section*{Acknowledgments}

We would like to thank Editage (www.editage.jp) for English language editing.
This work was funded by JSPS KAKENHI (Grant Numbers JP20K03757 and JP23K25786)
and the Research Institute for Mathematical Sciences,
an International Joint Usage/Research Center located in Kyoto University.


\bibliographystyle{abbrv}
\bibliography{reference}


\end{document}